\documentclass[11pt,a4paper]{article}

\usepackage[a4paper,margin=2.2cm]{geometry}

\usepackage[T1]{fontenc}
\usepackage[utf8]{inputenc} 
\usepackage{microtype}

\usepackage{mathtools}
\usepackage{amssymb,amsfonts,amsthm}
\usepackage{bm}
\usepackage{upgreek} 

\usepackage{graphicx}
\usepackage{caption}
\usepackage{booktabs}
\usepackage{siunitx}

\usepackage{enumitem}

\usepackage{tikz}
\usetikzlibrary{
	calc,positioning,patterns,arrows.meta,
	decorations.pathreplacing,decorations.pathmorphing,
	angles,quotes,intersections,backgrounds,shadings
}
\usepackage{tikz-3dplot}
\usepackage{pgfplots}
\usepgfplotslibrary{fillbetween}
\tikzset{snake it/.style={decorate, decoration=snake}}

\usepackage{xcolor}
\usepackage{listings}
\lstdefinestyle{clean}{
	basicstyle=\ttfamily\small,
	frame=single,
	framerule=0.3pt,
	breaklines=true,
	columns=fullflexible,
	keepspaces=true,
	showstringspaces=false
}
\usepackage{titlesec}
\titleformat{\section}{\Large\sffamily\bfseries}{\thesection}{0.8em}{}
\titleformat{\subsection}{\large\sffamily\bfseries}{\thesubsection}{0.8em}{}
\titlespacing*{\section}{0pt}{2.2ex plus .6ex minus .2ex}{1.0ex plus .2ex}
\titlespacing*{\subsection}{0pt}{1.6ex plus .4ex minus .2ex}{0.8ex plus .2ex}

\usepackage{fancyhdr}
\usepackage{hyperref}
\usepackage[nameinlink,noabbrev]{cleveref}
\hypersetup{
	colorlinks=true,
	linkcolor=blue,
	citecolor=blue,
	urlcolor=blue,
	pdftitle={Your Article Title},
	pdfauthor={Your Name}
}
\usepackage{bookmark}

\usepackage[toc,page]{appendix}

\newif\ifdraft
\draftfalse 

\ifdraft
\usepackage[notref]{showkeys}
\usepackage[colorinlistoftodos]{todonotes}
\else
\usepackage[disable]{todonotes}
\fi

\usepackage[normalem]{ulem}

\usepackage{cite}
\numberwithin{equation}{section} 

\newcommand\ds{\displaystyle}

\def\1{{\bf 1}}
\def\wo{\overline}
\def\clb{\color{blue}}
\def\clr{\color{red}}
\def\clm{\color{magenta}}

\def\O{\Omega}

\def\G1{{\bf 1}}

\def\d{\delta}

\def\e{\varepsilon}

\def\C{\mathcal{C}}

\def\sy{\text{sym}}
\def\Te{{\cal T}_\varepsilon}

\def\a{{\alpha}}

\def\Ga{{\bf a}}

\def\ds{\displaystyle}

\def\R{{\mathbb{R}}}
\def\Z{{\mathbb{Z}}}

\def\D{{\mathbb{D}}}
\def\R{{\mathbb{R}}}

\def\Z{{\mathbb{Z}}}

\def\Sc{{\mathcal{S}}}
\def\Pc{{\mathcal{P}}}
\def\Kc{{\mathcal{K}}}

\def\Pc{{\mathcal{P}}}

\def\Jc{{\mathcal{J}}}
\def\Mc{{\mathcal{M}}}

\def\ed{{\varepsilon\delta}}

\def\wh{\widehat }

\def\X{\times }

\def\Ga{{\bf a}}

\def\Gi{{\bf i}}
\def\Gj{{\bf j}}

\def\Gm{{\bf m}}
\def\Gn{{\bf n}}

\def\Gu{{\bf u}}
\def\Gv{{\bf v}}
\def\Gw{{\bf w}}

\def\Gz{{\bf z}}

\def\GA{{\bf A}}

\def\GH{{\bf H}}

\def\GJ{{\bf J}}

\def\GM{{\bf M}}

\def\GR{{\bf R}}

\def\GT{{\bf T}}
\def\GU{{\bf U}}

\def\GV{{\bf V}}

\definecolor{skyblue}{RGB}{135,206,235}
\definecolor{deepskyblue}{RGB}{0, 191, 255}

\usepackage{amsfonts}
\usepackage{graphicx}
\usepackage{epstopdf}
\usepackage{dsfont}
\usepackage{bm}
\usepackage{bbm}
\usepackage{algorithmic}
\usepackage{tikz}
\usepackage{pgfplots}
\usetikzlibrary{patterns}

\usepackage{nicefrac}
\usepackage{mathtools}

\usepackage{ulem}

\usepackage{url, breakurl}
\usepackage{charter} 
\usepackage{longtable}
\usepackage{float}
\usepackage{supertabular}
\usepackage{graphicx}
\usepackage{color}
\usepackage{subfigure}
\usepackage{overpic}
\usepackage{textcomp}
\usepackage[section]{placeins}

\usepackage{enumitem}

\usepackage{siunitx}
\newtheorem{theorem}{Theorem}[section]
\newtheorem{lemma}[theorem]{Lemma}
\newtheorem{corollary}[theorem]{Corollary}
\newtheorem{definition}{Definition}

\newtheorem{remark}{Remark}

\begin{document}
	
	\title{Homogenization of an optimal control problem for nonlocal semilinear elasticity with soft inclusions}
	
	
	%
\author{%
	Amartya Chakrabortty\thanks{Corresponding author. Processes and Materials, Fraunhofer ITWM, Kaiserslautern 67663, Germany. \href{mailto:amartya.chakrabortty@itwm.fraunhofer.de}{amartya.chakrabortty@itwm.fraunhofer.de}}\hskip 10mm
	Abu Sufian\thanks{Department of Mathematics, Universidad de Concepcion, 4070409, Chile. \href{mailto:asufian@udec.cl}{asufian@udec.cl}}
}

	\date{}
	
	\maketitle
	{\bf Keywords:}
	Homogenization, Optimal Control Problem, Semi-linear Elasticity, High-Contrast Domain, Unfolding Operator.\\
	
	{\bf { Mathematics Subject Classification (2020):} 35B27, 35B40, 35J20 49J20, 49N20, 74B05.}

\begin{abstract}
	This paper investigates the asymptotic analysis of an optimal control problem (OCP) posed on a high-contrast elastic medium with soft periodic inclusions, governed by a semilinear elasticity system with a nonlocal term. The domain consists of a connected matrix phase and a soft inclusion phase. The model depends on two independent small parameters: the periodicity $\varepsilon>0$ and the contrast $\delta>0$, and the distributed control acts only in the inclusion region. We consider an $L^2$-tracking cost on the displacement and analyze the limit as $(\varepsilon,\delta)\to(0,0)$ in the regime
	\[ 
	 \lim_{(\varepsilon,\delta)\to(0,0)}\frac{\delta}{\varepsilon}=\kappa\in(0,+\infty].
	\]
	First, we derive the homogenized (limit) state system associated with this scaling. We then formulate the limit OCP and prove that the limit of the microscopic optimal controls is an optimal control for the limit problem, using a $\Gamma$-convergence approach.
\end{abstract}

\section*{Introduction}
Smart origami-inspired metamaterials \cite{gries} and advanced textiles \cite{orlik01} offer a platform for designing structures with
programmable deformation patterns. A common engineering mechanism is to combine a connected stiff skeleton
(e.g.\ panels or fibers) with comparatively soft regions (e.g.\ hinges, joints, or inclusions), so that the
overall response is governed by a pronounced contrast of stiffness across the microstructure. In many
applications, one seeks to actively tune the deformation by embedding actuation or pre-strain in the
soft phase, while the stiff phase primarily carries and transmits load. This naturally leads to models in
which the control/actuation is supported only in the soft inclusions.

A widely used modeling viewpoint for such materials is to encode pre-strain (or growth/swelling) through a
multiplicative decomposition of the deformation gradient into an elastic part and a non-elastic part, see \cite{prestrain02}.
Under small-deformation and small-prestrain assumptions, the resulting linearized equations for the
displacement contain an additive term generated by the pre-strain (see \cite{Amartya}), which can be recast as an effective
source term in the equilibrium equations (see \cite{GOW2}). In this work, we adopt this perspective and treat the resulting
effective source term as a distributed control acting only in the soft inclusions. This provides a
mathematically tractable framework for shape/response optimization while remaining consistent with common
linearization principles used in the mechanics of pre-strained or swelling media.

Beyond the contrast and microstructure, many soft active materials exhibit an additional feature: their
mechanical response may depend not only on the local strain, but also on a global measure of the
deformation amplitude. Such global coupling can arise, for instance, from an underlying interaction with a
surrounding medium (e.g.\ solvent redistribution in gels, network effects, or mean-field-type feedback),
or from modeling choices that penalize large overall deformation to reflect saturation/stiffening effects.
To capture this in a minimal way while keeping the elastic operator local, we include a nonlocal semilinear term of the form $\alpha\|u\|_{L^2(\Omega)}^p u$ with $p\geq 2$. Equivalently, this term is generated by adding
to the total energy a convex contribution proportional to $\|u\|_{L^2(\Omega)}^{p+2}$, so that the resulting
Euler--Lagrange equation couples the local elastic equilibrium with a global amplitude-dependent response.

Homogenization for partial differential equations with high-contrast coefficients has attracted sustained attention over the past decades, in particular for diffusion and conductivity-type models. Early contributions include \cite{MR620783,MR926151}, where strongly contrasting coefficients were analyzed in connection with effective properties of composite media. Since then, a broad literature has developed on high-contrast homogenization and the emergence of nonstandard (often nonlocal) effective models.

Beyond diffusion, high-contrast effects also arise in elasticity and more general variational settings. In \cite{MR1952924}, the macroscopic behavior of an elastic composite with a strong component embedded in a weak matrix was studied; the homogenized limit exhibits nonlocal features that depend sensitively on the geometry of the stiff phase. Related high-contrast phenomena for conductivity were investigated in \cite{MR3000657}, which considers a medium consisting of highly conductive vertical fibers embedded in a poorly conductive matrix. Further examples include \cite{MR3342695}, addressing an elliptic variational problem on a pillar-type oscillating domain with an insulating layer surrounding a highly conductive core, and \cite{MR3450748}, where a hyperbolic PDE with contrasting diffusive coefficients leads to a coupled two-scale limit system incorporating both macro- and micro-scale effects. For additional references on high-contrast homogenization we refer to \cite{gaudiello2021limit,briane2007homogenization,MR3251918} and the references therein.

The homogenization of optimal control problems (OCPs) governed by PDE constraints has also been extensively studied. For elliptic OCPs with oscillating coefficients (possibly appearing both in the state equation and in the cost functional), see for instance \cite{MR1466916,MR451153}. The homogenization of OCPs in porous media was investigated in \cite{MR1666365}. For OCPs with high-contrast diffusion coefficients, we refer to \cite{AbuCon,MR4552077}, where the limiting problems may exhibit nonlocal effects. In these works, the principal part of the state equation is given by the Laplacian; see also \cite{MR451153,durante2010asymptotic,bidhan01} and the references therein.

Homogenization of an approximate controllability problem (ACP) constrained by semilinear state systems was initiated in \cite{Conca01} on fixed domains, and later extended to perforated domains in the cases where the holes do not intersect the control region \cite{Conca02} and where the holes intersect the control region \cite{Conca03}. In these works, the semilinearity enters through a local term involving a $\C^1$-function $S_1:\R\to\R$ satisfying
\begin{equation}\label{N1}
S_1(0)=0,
\qquad
\exists\,\gamma>0 \ \text{such that}\ 
0\le \frac{S_1(s)}{s}\le \gamma,
\quad \forall\, s\in\R\setminus\{0\}.
\end{equation}
Homogenization of OCPs in domains with oscillating boundaries was studied in \cite{Nanda01,Nanda02,Nanda03,Nanda04}, where the control acts on the fixed (non-oscillating) part of the boundary/domain. In these works, the semilinearity is also introduced via a local term involving a $\C^2$-function $S_2:\R\to\R$ such that
\begin{equation}\label{N2}
S_2(0)=0,
\qquad
0<C_1\le S_2'(s)\le C_2,
\quad \text{and}\quad S_2'' \ \text{is bounded}.
\end{equation}
In the analysis of above OCPs, the indirect method, which concerns the
asymptotic behavior of the corresponding optimality systems is used. 
In \cite{NL01}, homogenization study of a semilinear OCP is presented the direct method, relying on $\Gamma$-convergence arguments. Their semilinearity is assumed to satisfy a global growth/Lipschitz estimate of at most quadratic type,
together with monotonicity closedness. Moreover, since the leading operator is uniformly coercive/elliptic (i.e., no high-contrast degeneracy), the states admit uniform $H^1$-bounds, hence by the compact embedding strong $L^2$ convergence. This strong $L^2$-compactness is the key ingredient to pass to the limit in such semilinear terms.

This paper studies the asymptotic behavior of an optimal control problem (OCP) posed on a high-contrast domain in $\R^N$, governed by a semilinear elasticity system with a nonlocal (at least cubic) term, using the indirect method and a form of $\Gamma$-convergence. The high-contrast domain is a bounded domain with Lipschitz boundary $\O\subset \R^N$. It is decomposed into two parts: a connected stiff part $\O^2_\e$ with relative stiffness of order $1$, and a soft part $\O^1_\e$ with relative stiffness of order $\d^2$. The microscopic control $\theta_\e\in L^2(\O^1_\e)^N$ acts only on the soft part $\O^1_\e$ (extended by zero to $\O$).

The governing system (understood in the weak sense; see Section~\ref{S02} for details) reads, for $p\ge 2$ and $\alpha>0$,
\begin{equation}\label{Main02}
	\left\{\begin{aligned}
		-\nabla \cdot \sigma_\ed(u_\ed)+\alpha\|u_\ed\|^p_{ L^2(\O)}\, u_\ed&=f(x)+\1_{\O^1_\e}\theta_\e,\quad &&\text{in }\O,\\
		u_\ed&=0,\quad &&\text{on }\Gamma_0,\\
		\sigma_\ed(u_\ed)\Gn&=0,\quad&&\text{on }\Gamma_1,
	\end{aligned}\right.
\end{equation}
where $f=(f_i)$ denotes the body force.

The main aim of this paper is to analyze the asymptotic behavior, as $(\e,\d)\to(0,0)$, of an OCP that seeks to match a prescribed displacement profile using external forces supported in the soft inclusions. Let $u_d\in L^2(\O)^N$ be a given desired displacement. For each admissible control $\theta_\e$, we denote by $u_\ed(\theta_\e)\in\GU$ the unique weak solution of \eqref{Main02} (equivalently, of \eqref{MainW01}). We consider the reduced cost functional
\begin{equation*}
	\Gj_{\ed}(\theta_\e)
	=\frac12\int_\O |u_\ed(\theta_\e)-u_d|^2\,dx
	+\frac{\gamma}{2}\int_{\O^1_\e}|\theta_\e|^2\,dx,
\end{equation*}
where $\gamma>0$ is a regularization parameter. The OCP consists in finding an optimal control $\Theta_\e\in L^2(\O^1_\e)^N$ such that
\begin{equation*}
	\Gj_{\ed}(\Theta_\e)=\inf_{\theta_\e\in L^2(\O^1_\e)^N}\Gj_{\ed}(\theta_\e).
\end{equation*}

The results of the paper can be divided into two parts. First, we derive the limit (homogenized) state system associated with \eqref{Main02} in the asymptotic regime
\begin{equation}\label{R1}
	(\e,\d)\to(0,0),\quad \text{and}\quad \kappa=\lim_{(\e,\d)\to(0,0)}\frac{\d}{\e}\in(0,+\infty].
\end{equation}
Using periodic unfolding, suitable energy arguments and a form of $\Gamma$-convergence, we identify the limit displacement, the associated correctors, the limit total energy and characterize that the limit energy depends on the parameter $\kappa$. Moreover, in the case $\kappa=+\infty$ we obtain a fully scale-separated homogenized system. The main tool for asymptotic analysis is the periodic unfolding operator, which was introduced in \cite{CDG2} and further developed in \cite{CDG1}. For an introduction to homogenization and elasticity, see \cite{Olenik,Donato,twoscale1,twoscale2,CDG}.
 
Second, we analyze the asymptotic behavior of the microscopic OCP as $(\e,\d)\to(0,0)$ with $\d/\e\to\kappa$. We establish existence of microscopic optimal controls and prove convergence of the adjoint state and optimal controls. Moreover, we derive the limit OCP and show that any limit point of a sequence of microscopic optimal controls is an optimal control for the limit problem. The proof relies on a $\Gamma$-convergence approach for the reduced cost functionals, together with stability and strong-convergence properties for the unfolded state variables.

Finally, we emphasize a further novelty of the present work concerns the structure of the nonlocal semilinearity.
In several related contributions on homogenization of OCPs with semilinear state equations, the nonlinearity
enters through a scalar functional of the state combined with a scalar nonlinearity (see, for example,
the functions $S_1$ and $S_2$ in \eqref{N1}--\eqref{N2}), under global growth/monotonicity assumptions. In contrast,
our model involves the Hilbert-space operator
\[
u \mapsto \alpha\|u\|_{L^2(\Omega)}^{p}\,u,\quad p\geq 2
\]
which couples all spatial points through the global $L^2$-amplitude of the displacement.
As a consequence, passing to the limit in this term cannot be achieved from weak convergence alone; it requires
strong $L^2$ convergence (equivalently, convergence of $L^2$ norms), which we obtain through energy convergence
arguments in the unfolding framework. Moreover, the Fr\'echet derivative of this operator contains a
nonlocal rank-one contribution,
\[
D(\|u\|^p u)[h]=\|u\|^p h + p\|u\|^{p-2}(u,h)_{L^2(\Omega)}\,u,
\]
leading to additional global coupling in the linearized and adjoint systems.
These features, combined with the high-contrast elastic operator, make the analysis more delicate than in the
scalar diffusion setting. In particular, uniform $H^1$ bounds typically fail because the elastic energy degenerates
in the soft phase (due to the $\delta^2$ scaling). To overcome this difficulty, we introduce a suitable decomposition
(see Lemmas~\ref{L2}--\ref{L3}) and establish convergence of the total energy via a $\Gamma$-convergence argument.

The paper is organized as follows. General notation and the description of the domain are introduced in
Sections~\ref{N-1} and \ref{S02}, respectively. In Section~\ref{S03}, we prove existence, uniqueness, and
a priori estimates for the weak solution (equivalently, the minimizer) of the $\varepsilon$--$\delta$-dependent
semilinear elasticity system. In Section~\ref{S04}, we establish existence of an optimal control for the
$\varepsilon$--$\delta$-dependent OCP and show that the control-to-state mapping is globally Lipschitz. The main homogenization results for the state equation are presented in
Section~\ref{prototype}.  Finally, in
Section~\ref{S06}, we analyze the asymptotic behavior of the $\varepsilon$--$\delta$ OCP.

\section{Notation}\label{N-1}
Below, we give some general notations which will be used throughout the paper.
\begin{itemize}
	\item Let $N\geq 2$ and $Y=(0,1)^N$. Let $Y_1\subset Y$ bounded open connected set (domain) with Lipschitz boundary such that $\wo{Y_1}\subset Y$ and $Y_2= Y\setminus\overline{Y_1}$. Moreover, we assume that $Y_2$ is a bounded domain with Lipschitz boundary. We denote the interface by $\Lambda=\wo{Y_1}\cap \wo{Y_2} =\partial Y_1$. 
	\item  $\R^{N\X N}$ is the space of $N\times N$  real matrices, $\R_\sy^{N\X N}$ is the space of real $N\X N$ symmetric matrices.
	\item The strain tensor is given by
	$$e(u)={1\over 2}\big((\nabla u)^T+\nabla u\big),\quad \text{for all $u\in H^1(\O)^N$}.$$
	\item The boundary $\partial\O$ of $\O$ is partitioned into two disjoint parts $\Gamma_0$ and $\Gamma_1$ with the surface measure of $\Gamma_0$ being strictly positive. We set $\Gn(\cdot)$ the unit outward normal on $\partial\O$.
	\item $\alpha,\gamma > 0$ and $(i,j,k,l)\in \{1,2,\ldots, N\}^4$ (if not specified).
	\item We have used the convention ${1\over +\infty}=0$.
	\item By convention in all the estimates we simply write $L^2(\O)$ instead of $L^2(\O)^N$ or $L^2(\O)^{N\times N}$, we write the complete spaces when we give weak or strong convergence.
	\item In this paper, we use the Einstein convention of summation over repeated indices and we denote by $C$ a generic constant independent of $\e$ and $\d$.
\end{itemize}
\section{Domain description and initial setup}\label{S02}
Let $\O\subset\R^N$ be a bounded domain with Lipschitz boundary.
We define the sets $\O_\e^1$ and $\O^2_\e$ as follows:
$$\O_{\e}^1=\bigcup_{k\in \Kc_\e} (\e k+ \e Y_1),\quad \O_\e^2=\O\setminus \overline{\O^1_\e},\qquad  \Kc_\e =\big\{k\in \Z^N\; |\; \e(k+ Y)\subset \O\big\}.$$  Note that the holes $k\e+\e Y_1$, $k\in \Kc_\e$, do not intersect the boundary of $\O$.
We denote the interface by $\Lambda_\e=\wo{\O^1_\e}\cap \wo{\O^2_\e} =\partial \O^1_\e $.\\
Observe that $\O^2_\e$ is bounded domain with Lipschitz boundary and $\partial \O\subset \partial \O^2_\e$. We denote the indicator function by
$$\1_{\O^1_\e}(x)=\left\{\begin{aligned}
1,\quad &x\in \O^1_\e,\\
0,\quad &x\in\GR^N\setminus \O^1_\e.
\end{aligned}\right.\quad 
 \1_{\O\X Y_1}(x,y) =\left\{\begin{aligned}
1,\quad &(x,y)\in \O\X Y_1,\\
0,\quad &(x,y)\in\O\X(\GR^N\setminus Y_1).
\end{aligned}\right.$$
We define the two 4th order Hooke's tensors $A\in [L^\infty(Y)]^{N\X N\X N\X N}$ satisfying the following
\begin{itemize}
	\item Symmetric: $A_{ijkl}= A_{jikl}= A_{klij}$.
	\item Coercivity and  condition ($K>0$)
	\begin{equation}\label{Coe01}
		\begin{aligned}
			A_{ijkl}(y) S_{ij} S_{kl}\geq K| S|^2_F,\quad \text{for a.e. $y\in Y$ and for all $S\in \R^{N\X N}_\sy$}.
		\end{aligned}	
	\end{equation}
	Here $|\cdot|_F$ denotes the standard Frobenius norm. 
	\item We set
	$$A_{\e,ijkl}(x)= A_{ijkl}\left({x\over \e}\right),\quad  \text{for a.e. $x\in\O$}.$$
	\item Let $\GA_\ed=\d^2 A_\e \1_{\O^1_\e}+ A_\e(1-\1_{\O^1_\e})$.
	\item Let us define the stress tensor $\sigma_\ed$ by
	$$\sigma_{\ed,ij}(u)=\GA_{\ed,ijkl}e_{kl}(u),\quad \text{for all $u\in H^1(\O)^N$}.$$
\end{itemize}
\section{Existence of unique solution and a priori estimates}\label{S03}
The problem is to determine the displacement field $u_\ed=(u_{\ed,i})$ satisfying the system \eqref{Main02}. By virtue of the assumptions on $\GA_\ed$, we define the bilinear form $\Ga_\ed$ by
\begin{equation*}
\Ga_\ed(u,w)=\int_{\O}\sigma_{\ed,ij}(u)e_{ij}(w)\,dx,\quad \forall\,u,w\in H^1(\O)^N.
\end{equation*}
We also define the global nonlinear term as
$$g(u,w)=\alpha\|u\|^p_{L^2(\O)}\int_\O u\cdot w\,dx,\quad \forall\,u,w\in H^1(\O)^N.$$
So, the weak form of the problem \eqref{Main02} is given by: For a given function $f\in L^2(\O)^N$ and $\theta_\e\in L^2(\O_\e^1)^N$, find the solution ${u}_\ed\in \GU$ such that
\begin{equation}\label{MainW01}
\Ga_\ed(u_\ed,w)+g(u_\ed,w)=\ell_\e(w)=\int_\O f\cdot w\,dx+\int_{\O^1_\e}\theta_\e\cdot w\,dx,\quad \forall\,w\in \GU,
\end{equation}
where the set of admissible displacement is given by
$$\GU=\left\{u\in H^1(\O)^N\,|\; u=0\;\text{on $\Gamma_0$}\right\}.$$
The associated minimization problem is 
\begin{equation}\label{MainE01}
	\left\{\begin{aligned}
		&\text{Find $u_\ed\in\GU$ such that}\\
		&\hskip 3cm\Gm_\ed=\GT_\ed(u_\ed)=\inf_{w\in\GU}\GT_\ed(w),
	\end{aligned}\right.
\end{equation}
where the total energy to this nonlinear elasticity problem is given by
$$\GT_\ed(w)={1\over 2}\Ga_\ed(w,w)+{1\over p+2}g(w,w)-\ell_\e(w),\quad\text{for all $w\in\GU$}.$$
Since $\GT_{\e\d}$ is Fréchet differentiable and strictly convex on $\GU$ (for $\alpha>0$), its unique minimizer in $\GU$ coincides with the unique weak solution of \eqref{MainW01}.
\begin{theorem}\label{Th1}
	For every $\e,\d>0$, there exist a unique minimizer $u_\ed\in \GU$ to the problem \eqref{MainE01}, i.e.
	$$\GT_\ed(u_\ed)=\Gm_\ed=\inf_{w\in\GU}\GT_\ed(w).$$
\end{theorem}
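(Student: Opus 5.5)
We apply the direct method of the calculus of variations on the Hilbert space $\GU$, equipped with the $H^1(\O)^N$ norm, and combine it with strict convexity to get uniqueness. Throughout, $\e,\d>0$ are fixed, so constants may depend on them.

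\emph{Coercivity and continuity of the quadratic part.} Since $\d>0$, the tensor $\GA_\ed$ satisfies the pointwise bound
$$\GA_{\ed,ijkl}S_{ij}S_{kl}\ge \min\{1,\d^2\}K|S|_F^2 \quad\text{for all } S\in\R^{N\X N}_\sy,$$
by \eqref{Coe01}, and $\GA_\ed\in L^\infty$. Since $\Gamma_0$ has positive surface measure, Korn's inequality on $\GU$ gives
$$\Ga_\ed(w,w)\ge \min\{1,\d^2\}K\|e(w)\|^2_{L^2(\O)}\ge c_{\ed}\|w\|^2_{H^1(\O)}.$$
Hence $\Ga_\ed$ is a continuous, symmetric, coercive bilinear form on $\GU$, and $w\mapsto \frac12\Ga_\ed(w,w)$ is strictly convex and weakly lower semicontinuous. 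The nonlocal term
$$\frac1{p+2}g(w,w)=\frac{\alpha}{p+2}\|w\|^{p+2}_{L^2(\O)}$$
is a convex increasing function of a norm, so it is convex and nonnegative. It is also continuous on $\GU$, and weakly continuous along $H^1$-weakly convergent sequences because of the compact embedding $H^1(\O)\hookrightarrow L^2(\O)$ (Rellich, using the Lipschitz boundary). The functional $\ell_\e$ is linear and bounded on $\GU$, with $|\ell_\e(w)|\le(\|f\|_{L^2}+\|\theta_\e\|_{L^2(\O^1_\e)})\|w\|_{L^2}$.

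\emph{Existence.} These bounds give
$$\GT_\ed(w)\ge \tfrac{c_\ed}{2}\|w\|^2_{H^1}-C\|w\|_{H^1},$$
so $\GT_\ed$ is coercive and bounded below, and $\Gm_\ed>-\infty$. We take a minimizing sequence $(w_n)$. It is bounded in $\GU$, so up to a subsequence $w_n\rightharpoonup u_\ed$ weakly in $H^1(\O)^N$ and strongly in $L^2(\O)^N$. The limit lies in $\GU$ because $\GU$ is a closed subspace, hence weakly closed. Passing to the limit term by term (lower semicontinuity for $\Ga_\ed$, strong $L^2$ convergence for $g$ and for $\ell_\e$) gives $\GT_\ed(u_\ed)\le\liminf\GT_\ed(w_n)=\Gm_\ed$. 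So $u_\ed$ is a minimizer.

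\emph{Uniqueness.} $\GT_\ed$ is the sum of a strictly convex quadratic form (strict because $\Ga_\ed$ is coercive), a convex term and a linear term, so it is strictly convex. If there were two distinct minimizers $u\neq v$, strict convexity would give $\GT_\ed(\frac{u+v}2)<\Gm_\ed$, a contradiction.

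\emph{Link to the weak formulation.} For the equivalence with \eqref{MainW01} noted before the statement, we compute the Gateaux derivative of $\frac1{p+2}\|w\|^{p+2}$ in the direction $h$: it equals $\|w\|^p(w,h)$. The Euler--Lagrange equation of $\GT_\ed$ is therefore exactly \eqref{MainW01}.

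The only step needing some care is Korn's inequality on $\GU$ together with the weak continuity of the nonlocal term. Both are standard, so we do not expect a genuine obstacle; the constants simply degenerate as $\d\to0$, which is irrelevant here.
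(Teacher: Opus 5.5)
Your proof is correct, and the existence part follows the paper's direct-method argument: a coercive lower bound on $\GT_\ed$ via Korn's inequality on $\GU$, weak compactness of a minimizing sequence, and weak lower semicontinuity. For uniqueness you use strict convexity of $\GT_\ed$ directly, whereas the paper applies strong monotonicity of the Euler--Lagrange operator $u\mapsto \Ga_\ed(u,\cdot)+g(u,\cdot)$ to two minimizers solving \eqref{MainW01}; the routes are equivalent, yours avoids the monotonicity identity for the nonlocal term, and the paper's also gives uniqueness of weak solutions directly.
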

\begin{proof}
	Observe that
	$$\Gm_\ed\leq \GT_\ed(0)=0,$$
	so, we have $\Gm_\ed\in [-\infty,0]$. 
	
	{\bf Step 1.} We prove $\Gm_\ed\in(-\infty,0]$.
	
	First using the assumption \eqref{Coe01} and Korn's inequality, we have for any $w\in\GU$
	\begin{equation}\label{l2}
		\Ga_\ed(w,w)\geq C(\d)\|w\|^2_{H^1(\O)}.
	\end{equation}
	The positive constant  $C(\d)$  depends on $\d$. 
		
	Using Cauchy-Schwarz and Poincare inequality, we have for $w\in \GU$
	\begin{equation}\label{l1}
		|\ell_\e(w)|\leq C(\|f\|_{L^2(\O)}+\|\theta_\e\|_{L^2(\O^1_\e)})\|w\|_{L^2(\O)}\leq C(\|f\|_{L^2(\O)}+\|\theta_\e\|_{L^2(\O^1_\e)})\|w\|_{H^1(\O)}.
	\end{equation}
	The positive constant $C$ is independent of $\e$ and $\d$.
	
	We also have
	$$g(w,w)=\alpha\|w\|^{p+2}_{L^2(\O)}.$$
	Observe that for all $w\in\GU$ satisfying
	\begin{equation}\label{l3}
		\GT_\ed(w)\leq \GT_\ed(0)=0,
	\end{equation}
	we have using \eqref{l2}--\eqref{l1}
	\begin{multline}\label{AP01}
		{C(\d)\over 2}\|w\|^2_{H^1(\O)}+{\a\over p+2}\|w\|^{p+2}_{L^2(\O)}\leq {1\over 2}\Ga_\ed(w,w)+{1\over p+2}g(w,w)\\
		\leq |\ell_\e(w)|\leq C(\|f\|_{L^2(\O)}+\|\theta_\e\|_{L^2(\O^1_\e)})\|w\|_{H^1(\O)},
	\end{multline}
	which imply
	$$\|w\|_{H^1(\O)}\leq C(\d)(\|f\|_{L^2(\O)}+\|\theta_\e\|_{L^2(\O^1_ \e)}),\quad |\ell_\e(w)|\leq C(\d).$$
	Then, the above inequalities give for all $w\in\GU$ satisfying \eqref{l3}
	\begin{equation}\label{l4}
		\|w\|_{H^1(\O)}\leq C(\d)(\|f\|_{L^2(\O)}+\|\theta_\e\|_{L^2(\O^1_\e)}),\quad	\text{and}\quad
		-k(\e,\d)\leq -\ell_\e(w)\leq \GT_\ed(w)\leq 0.
	\end{equation}
	The positive constant $C(\d)$ depend on $\d$ and $k(\e,\d)$ depends on $\d$ and $\theta_\e$.

	The above inequalities give $\Gm_\ed\in(-\infty,0]$.
	
	{\bf Step 2.} We prove the existence of minimizer.
	
	Note the following: The quadratic elastic part $w\mapsto\Ga_\ed(w,w)$ is convex and continuous on $\GU$, hence weakly lower semicontinuous. The nonlinear term $w\mapsto g(w,w)$ is convex functional on a Hilbert space, so weakly lower semicontinuous. The right hand side term $-\ell_\e(w)$ is weakly continuous. Thus, $\GT_\ed$ is weakly lower semicontinuous on $\GU$. 
	
	Using the estimate \eqref{l4}, there exist a minimizing sequence $\{w_n\}_n\subset \GU$ satisfying \eqref{l3} and
	$$\Gm_\ed=\inf_{w\in\GU}\GT_\ed(w)=\liminf_{n\to+\infty}\GT_\ed(w_n).$$
	We also have the following convergence at-least for a subsequence (denoted by the same subscript)
	$$w_n\rightharpoonup u_\ed,\quad\text{weakly in $H^1(\O)^N$}.$$
	Since, $\GT_\ed$ is weak lower semicontinuous, we obtain
	$$\GT_\ed(u_\ed)\leq \liminf_{n\to+\infty}\GT_\ed(w_n)=\Gm_\ed.$$
	So, we have existence of  minimizer in $\GU$ and the fact that $\Gm_\ed$ is a minimum by sequencial criteria.
	
	{\bf Step 3.} We prove uniquesness of the minimizer.
	
	Let us consider the operator 
	$$\langle F(u),w\rangle:=\Ga_\ed(u,w)+g(u,w),\quad\forall\,u,w\in\GU.$$
	Then, $u\in\GU\mapsto F(u)\in\GU^\ast$, where $\GU^\ast$ is the dual space of $\GU$.
	
	Observe that for $u,w\in\GU$, we have (using \eqref{l2})
	\begin{align*}
		&\langle F(u)-F(w),u-w\rangle\\
		&\hskip1cm=\Ga_\ed(u-w,u-w)+g(u,u-w)-g(w,u-w)\\
		&\hskip 1cm \geq C\|u-w\|^2_{H^1(\O)}.
	\end{align*}
	since
	\begin{align*}
		0 &\le g(u,u-w)-g(w,u-w)
		= \alpha\big(\|u\|_{L^2(\Omega)}^{p}u-\|w\|_{L^2(\Omega)}^{p}w,\;u-w\big)_{L^2(\Omega)}\\
		&=\frac{\alpha}{2}\Big(\|u\|_{L^2(\Omega)}^{p}-\|w\|_{L^2(\Omega)}^{p}\Big)
		\Big(\|u\|_{L^2(\Omega)}^{2}-\|w\|_{L^2(\Omega)}^{2}\Big)
		+\frac{\alpha}{2}\Big(\|u\|_{L^2(\Omega)}^{p}+\|w\|_{L^2(\Omega)}^{p}\Big)
		\|u-w\|_{L^2(\Omega)}^{2}.
	\end{align*}
	
	Let $u_\ed,v_\ed$ be two minimizer of \eqref{MainE01} in $\GU$, then, we have from the above expression, we have
	\begin{equation*}
		0=\langle F(u_\ed)-F(v_\ed),u_\ed-v_\ed\rangle\geq C\|u_\ed-v_\ed\|^2_{H^1(\O)},
	\end{equation*}
	since $F(u_\ed)=F(v_\ed)=\ell_\e$ in $\GU^\ast$ using the variational form \eqref{MainW01}. So, we get $u_\ed=v_\ed$ a.e. in $\O$. Hence, there exist a unique minimizer (which also the weak solution to the problem \eqref{MainW01}) to the problem \eqref{MainE01}.
	
	This completes the proof.
\end{proof}	
The unique solution corresponding to the given $\theta_\e$ will be denoted by $u_\ed(\theta_\e)$.
\begin{lemma}[Preliminary estimates]\label{L2}
	Let $u\in\GU$ be a displacement, then there exist displacements $\Gu\in\GU$ and $\Gw$ in $H^1_0(\O^1_\e)^N$, extended by $0$ in $\O^2_\e$,  such that
	\begin{equation}\label{ExRF}
		u=\Gu+\Gw\quad\text{a.e. in $\O$}
	\end{equation}
	and 
	\begin{equation}\label{46}
		\begin{aligned}
			\|e(\Gu)\|_{L^2(\O)}&\leq C\|e(u)\|_{L^2(\O_\e^2)},\\
			\|e(\Gw)\|_{ L^2(\O)}&\leq C\|e(u)\|_{L^2(\O^1_\e)}+C\|e(u)\|_{L^2(\O_\e^2)},\\
			\|\Gw\|_{L^2(\O^1_\e)}+\e\|\nabla \Gw\|_{L^2(\O^1_\e)} &\leq C\e\|e( \Gw)\|_{L^2(\O)}.
		\end{aligned}
	\end{equation}
	Moreover, we have the following Korn-type inequality
	\begin{equation}\label{NK01}
		\|u\|_{L^2(\O)}\leq C\left(\e\|e(u)\|_{L^2(\O^1_\e)}+\|e(u)\|_{L^2(\O^2_\e)}\right).
	\end{equation}
	The constant(s) do not depend on $\e$ and $\d$.
\end{lemma}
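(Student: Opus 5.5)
The plan is to build $\Gu$ with a uniform extension operator from the perforated domain $\O^2_\e$ into the inclusions, working cell by cell, and then to set $\Gw=u-\Gu$.

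\textbf{Construction of the extension.} Since $\wo{Y_1}\subset Y$ and $Y_2$ is a Lipschitz domain, the standard extension result for linear elasticity applies (Oleinik--Shamaev--Yosifian; see \cite{Olenik}). It provides a linear operator $P:H^1(Y_2)^N\to H^1(Y)^N$ with the following properties:
\begin{itemize}
\item $Pv=v$ in $Y_2$;
\item $\|e(Pv)\|_{L^2(Y)}\le C\|e(v)\|_{L^2(Y_2)}$;
\item $P$ reproduces rigid displacements.
\end{itemize}
The last property holds because one may subtract the $L^2(Y_2)$-projection $r$ onto rigid motions, extend $v-r$ by a bounded extension, use Korn's inequality for $v-r$ on $Y_2$, and add $r$ back.

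Now rescale $P$ to each cell $\e k+\e Y$, $k\in\Kc_\e$. The strain bound is invariant under the rescaling $x=\e k+\e y$, since both sides scale identically. Define $\Gu=P_\e u$, meaning $\Gu=u$ in $\O^2_\e$ and the cellwise extension inside each hole $\e k+\e Y_1$. The cellwise pieces glue into an $H^1$ function because the traces agree on $\partial(\e k+\e Y_1)$. Also $\Gu=0$ on $\Gamma_0$, since $\partial\O\subset\partial\O^2_\e$, so $\Gu\in\GU$. Summing the cell estimates gives the first bound in \eqref{46}.

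\textbf{The inclusion part $\Gw$.} Set $\Gw=u-\Gu$. It vanishes in $\O^2_\e$, hence $\Gw\in H^1_0(\O^1_\e)^N$. The triangle inequality gives
$$\|e(\Gw)\|\le \|e(u)\|_{L^2(\O^1_\e)}+\|e(\Gu)\|_{L^2(\O^1_\e)},$$
which together with the first bound yields the second estimate in \eqref{46}.

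For the third estimate, take one hole $\e k+\e Y_1$. For $H^1_0$ functions Korn's inequality without rigid motions holds: $\|\nabla v\|_{L^2}=\sqrt2\|e(v)\|_{L^2}$ up to the divergence term, and in any case $\|\nabla v\|\le C\|e(v)\|$ by the first Korn inequality, with a constant that is scale invariant. Poincaré's inequality on $Y_1$, rescaled, gives $\|v\|_{L^2}\le C\e\|\nabla v\|_{L^2}$. Summing over $k$ gives
$$\|\Gw\|_{L^2(\O^1_\e)}+\e\|\nabla\Gw\|_{L^2(\O^1_\e)}\le C\e\|e(\Gw)\|_{L^2(\O)}.$$

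\textbf{Korn-type inequality \eqref{NK01}.} Since $\Gu\in\GU$, the usual Korn inequality on $\O$ (available because $|\Gamma_0|>0$) combined with Poincaré gives
$$\|\Gu\|_{L^2(\O)}\le C\|e(\Gu)\|_{L^2(\O)}\le C\|e(u)\|_{L^2(\O^2_\e)}.$$
Next,
$$\|\Gw\|_{L^2}\le C\e\|e(\Gw)\|\le C\e\|e(u)\|_{L^2(\O^1_\e)}+C\e\|e(u)\|_{L^2(\O^2_\e)}.$$
Adding these two bounds and using $\e\le 1$ proves \eqref{NK01}.

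\textbf{Main obstacle.} The delicate step is the uniform extension operator that controls only the symmetric gradient. A plain $H^1$ extension would control $\nabla$, not $e$, so a Korn-on-$Y_2$ argument modulo rigid motions is required. This needs $Y_2$ to be connected and Lipschitz, which is exactly the stated hypothesis. I would cite \cite{Olenik} (Oleinik--Shamaev--Yosifian, Ch.~I) for this step.
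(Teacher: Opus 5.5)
Your proposal is correct and follows the same route as the paper. You extend $u|_{\O^2_\e}$ with control of the strain only, set $\Gw=u-\Gu$, use the triangle inequality, apply a rescaled Korn/Poincar\'e inequality in each hole $\e k+\e Y_1$ to the $H^1_0$ part, and finish with Korn on $\GU$ plus $\e\le 1$ for the final Korn-type inequality. The only difference is that you build the strain-controlled extension cell by cell, reproducing rigid motions via Korn on the connected Lipschitz set $Y_2$, whereas the paper simply quotes the global extension operator from \cite{Olenik,CDG}.
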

\begin{proof}
	We recall a standard extension result from \cite{Olenik,CDG}, since $\O^2_\e\subset \O$ is bounded connected open set with Lipschitz boundary: There exists an extension operator ${\bm\Pc}_\e$ from $H^1(\O^2_\e)^N$ into $H^1(\O)^N$ satisfying
	\begin{equation}
		\begin{aligned}
			\forall w\in H^1(\O^2_\e)^N,\qquad &{\bm\Pc}_\e(w)\in H^1(\O)^N,\qquad {\bm \Pc}_\e(w)_{|\O^2_\e}=w,\\
			&\big\|e\big({\bm\Pc}_\e(w)\big)\big\|_{L^2(\O)}\leq C\|e(w)\|_{L^2(\O^2_\e)}.
		\end{aligned}
	\end{equation}
	The constant does not depend on $\e$ and $\d$.
	
	Let $u\in \GU$. Then, we denote by $\Gu={\bm\Pc}_\e\left({u}_{|\O^2_\e}\right)$ and $\Gw=u-\Gu$.
	
	The estimate \eqref{46}$_1$ is due to the previous extension result \eqref{ExRF}. Since $u=\Gu+\Gw$ and using \eqref{46}$_1$, we have \eqref{46}$_2$.\\
	We prove \eqref{46}$_2$. We define $\Gw_k(y)=\Gw(\e k+ \e y)$ for all $y\in Y_1$. Since $\Gw$ satisfy \eqref{46}$_2$, we have using Korn's inequality in the fixed domain $Y_1$
	$$\|\Gw_k\|_{L^2(Y_1)}+\|\nabla_y \Gw_k\|_{L^2(Y_1)}\leq C\|e_y(\Gw_k)\|_{L^2(Y_1)}.$$
	The constant does not depend on $k\in \Kc_\e$, it only depends on $Y_1$.
	Since
	$$\nabla_y\Gw_k(y)=\e\nabla\Gw(\e k+ \e y),\quad\text{for a.e. $y\in Y_1$},$$
	we obtain using change of variable
	$$\|\Gw\|^2_{L^2(\e k+\e Y_1)}+ \e^2\|\nabla \Gw\|^2_{L^2(\e k+\e Y_1)}\leq C \e^2\|e(\Gw)\|^2_{L^2(\e k+\e Y_1)}.$$
	Adding for all $k\in\Kc_\e$ give \eqref{46}$_3$. 
	
	Since $ u=\Gu+\Gw$ and using the estimates \eqref{46}, we obtain
	$$\|u\|_{L^2(\O)}\leq C\|e(\Gu)\|_{L^2(\O)}+C\e\|e(\Gw)\|_{L^2(\O)}\leq C\left(\e\|e(u)\|_{L^2(\O^1_\e)}+\|e(u)\|_{L^2(\O^2_\e)}\right).$$
	This completes the proof.
\end{proof}
As a consequence of the above lemma, we have the following Korn-type inequalities:
		\begin{lemma}\label{L3}
			The unique solution $u_\ed\in\GU$ corresponding to $\theta_\e\in L^2(\O_\e^1)^N$ of \eqref{MainW01} and let $\Gu_\ed(\theta_\e)$, $\Gw_\ed(\theta_\e)$ be the fields from Lemma \ref{L2}. Then, we have the following
			\begin{equation}\label{E01}
				\begin{aligned}
					\d\|e(u_\ed(\theta_\e))\|_{L^2(\O^1_\e)}+\|e(u_\ed(\theta_\e))\|_{L^2(\O^2_\e)}&\leq C\left(\|f\|_{L^2(\O)}+\|\theta_\e\|_{L^2(\O^1_\e)}\right),\\
					\|\Gu_\ed(\theta_\e)\|_{H^1(\O)}+\|e(\Gu_\ed(\theta_\e))\|_{L^2(\O)}&\leq C\left(\|f\|_{L^2(\O)}+\|\theta_\e\|_{L^2(\O^1_\e)}\right),\\
					\|\Gw_\ed(\theta_\e)\|_{L^2(\O)}+\e\|\nabla \Gw_\ed(\theta_\e)\|_{L^2(\O)}+\e\|e(\Gw_\ed(\theta_\e))\|_{L^2(\O)}&\leq {C\e\over \d}\left(\|f\|_{L^2(\O)}+\|\theta_\e\|_{L^2(\O^1_\e)}\right).
				\end{aligned}
			\end{equation}
			The constant(s) do not depend on $\e$ and $\d$.
		\end{lemma}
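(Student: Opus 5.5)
Test the weak formulation \eqref{MainW01} with $w=u_\ed$, where $u_\ed=u_\ed(\theta_\e)$. Using \eqref{Coe01} we get
\[
K\d^2\|e(u_\ed)\|^2_{L^2(\O^1_\e)}+K\|e(u_\ed)\|^2_{L^2(\O^2_\e)}+\alpha\|u_\ed\|^{p+2}_{L^2(\O)}\le \Ga_\ed(u_\ed,u_\ed)+g(u_\ed,u_\ed)=\ell_\e(u_\ed).
\]
The nonlocal term is nonnegative, so it can be dropped. For the right-hand side, Cauchy--Schwarz gives
\[
|\ell_\e(u_\ed)|\le\big(\|f\|_{L^2(\O)}+\|\theta_\e\|_{L^2(\O^1_\e)}\big)\|u_\ed\|_{L^2(\O)}.
\]
I then apply the Korn-type inequality \eqref{NK01} of Lemma~\ref{L2}, which is uniform in $\e,\d$:
\[
\|u_\ed\|_{L^2(\O)}\le C\big(\e\|e(u_\ed)\|_{L^2(\O^1_\e)}+\|e(u_\ed)\|_{L^2(\O^2_\e)}\big).
\]

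This is the one delicate point. The left-hand side controls $\d\|e(u_\ed)\|_{L^2(\O^1_\e)}$, whereas \eqref{NK01} carries $\e\|e(u_\ed)\|_{L^2(\O^1_\e)}$. To compare them I use the regime \eqref{R1}: since $\d/\e\to\kappa\in(0,+\infty]$, we have $\e\le C\d$ for $(\e,\d)$ small enough. This is where the assumption $\kappa>0$ enters, and the constants may be taken uniform along the sequence. Setting
\[
X=\d\|e(u_\ed)\|_{L^2(\O^1_\e)}+\|e(u_\ed)\|_{L^2(\O^2_\e)},
\]
the two bounds combine to $X^2\le C\big(\|f\|_{L^2(\O)}+\|\theta_\e\|_{L^2(\O^1_\e)}\big)X$. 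Dividing by $X$ gives \eqref{E01}$_1$.

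For \eqref{E01}$_2$, take $\Gu_\ed=\bm\Pc_\e(u_{\ed|\O^2_\e})\in\GU$ as in Lemma~\ref{L2}. By \eqref{46}$_1$, $\|e(\Gu_\ed)\|_{L^2(\O)}\le C\|e(u_\ed)\|_{L^2(\O^2_\e)}$, which is bounded by the first estimate. The classical Korn inequality on $\GU$ (here $\Gamma_0$ has positive measure, and $\O$ is fixed, so the constant is independent of $\e,\d$) then bounds $\|\Gu_\ed\|_{H^1(\O)}$.

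For \eqref{E01}$_3$, write $\Gw_\ed=u_\ed-\Gu_\ed$. This field is supported in $\O^1_\e$, so
\[
\|e(\Gw_\ed)\|_{L^2(\O)}\le\|e(u_\ed)\|_{L^2(\O^1_\e)}+\|e(\Gu_\ed)\|_{L^2(\O)}.
\]
By the first estimate this is at most $C\big(\tfrac1\d+1\big)\big(\|f\|_{L^2(\O)}+\|\theta_\e\|_{L^2(\O^1_\e)}\big)\le \tfrac{C}{\d}\big(\|f\|_{L^2(\O)}+\|\theta_\e\|_{L^2(\O^1_\e)}\big)$ for $\d\le1$. Multiplying by $\e$ gives the bound on $\e\|e(\Gw_\ed)\|_{L^2(\O)}$. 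Finally, \eqref{46}$_3$ gives
\[
\|\Gw_\ed\|_{L^2(\O)}+\e\|\nabla\Gw_\ed\|_{L^2(\O)}\le C\e\|e(\Gw_\ed)\|_{L^2(\O)}\le \frac{C\e}{\d}\big(\|f\|_{L^2(\O)}+\|\theta_\e\|_{L^2(\O^1_\e)}\big),
\]
which completes \eqref{E01}$_3$.

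The only real obstacle is the mismatch between $\e$ and $\d$ in the first step. If one wanted to avoid using $\e\le C\d$, one could instead bound the $\Gw$ part of $\ell_\e(u_\ed)$ directly via \eqref{46}$_3$. However, that route still produces a factor $\e/\d$, so the regime \eqref{R1} is needed either way.
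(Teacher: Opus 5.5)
Your proof is correct and follows essentially the same route as the paper. You test \eqref{MainW01} with $u_{\varepsilon\delta}$, drop the nonnegative nonlocal term, and bound $\|u_{\varepsilon\delta}\|_{L^2(\Omega)}$ by the Korn-type inequality \eqref{NK01}; you then use $\varepsilon\le C\delta$ from \eqref{R1} to close the quadratic inequality, and read off the bounds on $\mathbf{u}_{\varepsilon\delta}$ and $\mathbf{w}_{\varepsilon\delta}$ from \eqref{46} and Korn's inequality. Your treatment of the last estimate, writing $e(\mathbf{w}_{\varepsilon\delta})=e(u_{\varepsilon\delta})-e(\mathbf{u}_{\varepsilon\delta})$ on $\Omega^1_\varepsilon$ and absorbing the $O(1)$ part into $C/\delta$ for $\delta\le 1$, is slightly more explicit than the paper's.
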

		\begin{proof}
			Let $\theta_\e\in L^2(\O^1_\e)^N$ be the control. For simplicity, we drop the $(\theta_\e)$ in each displacements.
			
		In the main problem \eqref{MainW01}, taking $w= u_\ed$ and using the coercivity of $\GA_\ed$ in \eqref{Coe01} along with assumption on forces and \eqref{AP01} gives
		$$K\left(\d^2\|e(u_\ed)\|^2_{L^2(\O^1_\e)}+\|e(u_\ed)\|^2_{L^2(\O^2_\e)}\right)+\a\|u_\ed\|^{p+2}_{L^2(\O)}\leq C\left(\|f\|_{L^2(\O)}+\|\theta_\e\|_{L^2(\O^1_\e)}\right)\|u_\ed\|_{L^2(\O)}.$$
		The Korn-type inequality \eqref{NK01}
		imply
		\begin{multline*}
			\left(\d^2\|e(u_\ed)\|^2_{L^2(\O^1_\e)}+\|e(u_\ed)\|^2_{L^2(\O^2_\e)}\right)\\
			\leq C\left(\|f\|_{L^2(\O)}+\|\theta_\e\|_{L^2(\O^1_\e)}\right)\left(\e\|e(u_\ed)\|_{L^2(\O^1_\e)}+\|e(u_\ed)\|_{L^2(\O^2_\e)}\right).
		\end{multline*}
		Due to \eqref{R1}, there exist a $C>0$ independent of $\e$ and $\d$ such that
		\begin{equation}\label{R11}
			\e\leq C\d.
		\end{equation}
	 So, we obtain
		\begin{multline*}
			\left(\d\|e(u_\ed)\|_{L^2(\O^1_\e)}+\|e(u_\ed)\|_{L^2(\O^2_\e)}\right)^2
			\leq 2\left(\d^2\|e(u_\ed)\|^2_{L^2(\O^1_\e)}+\|e(u_\ed)\|^2_{L^2(\O^2_\e)}\right)\\
			\leq 2C\left(\|f\|_{L^2(\O)}+\|\theta_\e\|_{L^2(\O^1_\e)}\right)\left(\d\|e(u_\ed)\|_{L^2(\O^1_\e)}+\|e(u_\ed)\|_{L^2(\O^2_\e)}\right).
		\end{multline*}
		The above inequality gives \eqref{E01}$_1$. Then, using the estimates \eqref{46}$_{3,4}$, we get
		$$\begin{aligned}
			 \d \|e(\Gw_\ed)\|_{L^2(\O^1_\e)} +\|e(\Gu_\ed)\|_{L^2(\O)}\leq C\left(\|f\|_{L^2(\O)}+\|\theta_\e\|_{L^2(\O^1_\e)}\right).
		\end{aligned}$$
		Now, since $\Gu_\ed$ and $\Gw_\ed$ satisfy the boundary conditions \eqref{46}$_{1,2}$, using Korn's inequality and the estimate \eqref{46}$_5$, we obtain
		$$\|\Gu_\ed\|_{H^1(\O)}+{\d \over \e}\|\Gw_\ed\|_{L^2(\O)}+\d\|\nabla \Gw_\ed\|_{L^2(\O)}\leq C\left(\|f\|_{L^2(\O)}+\|\theta_\e\|_{L^2(\O^1_\e)}\right).$$
		This completes the proof.
		\end{proof}
\section{$\e,\d$-dependent optimal control problem}\label{S04}
In this section, we define the optimal control problem. Let use consider the cost functional 
$$\GJ_\e(u,\theta_\e)={1\over 2}\|u-u_d\|^2_{L^2(\O)}+{\gamma\over 2}\|\theta_\e\|^2_{L^2(\O^1_\e)},\quad \forall\,(u,\theta_\e)\in \GU\X \GV,$$
where $\GV=L^2(\O^1_\e)^N$. Let us define the reduced cost function as
$$\Gj_\ed(\theta_\e)=\GJ_\e(u_\ed(\theta_\e),\theta_\e),\quad \forall\,\theta_\e\in\GV.$$
The optimal control problem (OCP) reads as
\begin{equation}\label{OCPM01}
	\begin{aligned}
		&\Gi_\ed=\inf_{\theta_\e\in\GV}\big\{\Gj_\ed(\theta_\e)=\GJ_\e(u_\ed(\theta_\e),\theta_\e)\,|\,\text{subjected to $u_\ed(\theta_\e)$ satisfying \eqref{MainW01}}\big\},
	\end{aligned}
\end{equation}
where $u_\ed(\theta_\e)$ is the unique weak solution to the problem \eqref{MainW01}, corresponding to $\theta_\e$.
\begin{theorem}\label{Th2}
	For each $\e,\d>0$, the OCP \eqref{OCPM01} admits a solution.
\end{theorem}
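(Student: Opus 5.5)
We prove Theorem~\ref{Th2} by the direct method of the calculus of variations applied to the reduced cost $\Gj_\ed$ on $\GV=L^2(\O^1_\e)^N$, with $\e,\d>0$ fixed. Since $\Gj_\ed\ge 0$, the infimum $\Gi_\ed$ is finite, and we take a minimizing sequence $\{\theta_n\}_n\subset\GV$ with $\Gj_\ed(\theta_n)\to\Gi_\ed$. The inequality $\Gj_\ed(\theta_n)\le \Gj_\ed(0)+1$ together with the term $\frac{\gamma}{2}\|\theta_n\|^2_{L^2(\O^1_\e)}$ and $\gamma>0$ shows that $\{\theta_n\}$ is bounded in $\GV$. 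Hence, up to a subsequence, $\theta_n\rightharpoonup\Theta_\e$ weakly in $L^2(\O^1_\e)^N$.

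Next we pass to the limit in the states. Set $u_n=u_\ed(\theta_n)$. Estimate \eqref{l4} from the proof of Theorem~\ref{Th1} holds for the minimizer, since $\GT_\ed(u_n)\le \GT_\ed(0)=0$. Alternatively, Lemma~\ref{L3} can be combined with Korn's inequality at fixed $\d$. Either way, $\|u_n\|_{H^1(\O)}\le C(\d)\big(\|f\|_{L^2(\O)}+\|\theta_n\|_{L^2(\O^1_\e)}\big)$, which is bounded uniformly in $n$. Since $\GU$ is weakly closed and $H^1(\O)\hookrightarrow L^2(\O)$ is compact (Rellich, $\O$ Lipschitz), a further subsequence gives $u_n\rightharpoonup u^\ast$ weakly in $H^1(\O)^N$ and $u_n\to u^\ast$ strongly in $L^2(\O)^N$, with $u^\ast\in\GU$.

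The key step, and the only place where the nonlocal nonlinearity matters, is to show that $u^\ast=u_\ed(\Theta_\e)$. We pass to the limit in \eqref{MainW01} for a fixed test function $w\in\GU$.
\begin{itemize}
\item $\Ga_\ed(u_n,w)\to\Ga_\ed(u^\ast,w)$, because the bilinear form is continuous and $u_n$ converges weakly in $H^1$.
\item $\ell_\e(w)$ converges, because $\int_{\O^1_\e}\theta_n\cdot w\,dx\to\int_{\O^1_\e}\Theta_\e\cdot w\,dx$ by weak convergence of $\theta_n$.
\item For the nonlocal term, strong $L^2$ convergence gives $\|u_n\|^p_{L^2(\O)}\to\|u^\ast\|^p_{L^2(\O)}$ and $\int_\O u_n\cdot w\,dx\to\int_\O u^\ast\cdot w\,dx$, so $g(u_n,w)\to g(u^\ast,w)$.
\end{itemize}
Thus $u^\ast$ solves \eqref{MainW01} with control $\Theta_\e$. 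By the uniqueness in Theorem~\ref{Th1}, $u^\ast=u_\ed(\Theta_\e)$, and the whole subsequence converges to it. Weak convergence alone would not suffice for the product $\|u_n\|^p u_n$; this is why the compact embedding at fixed $\d$ is needed.

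Finally, we use weak lower semicontinuity of the cost. The strong $L^2$ convergence gives $\|u_n-u_d\|^2_{L^2(\O)}\to\|u_\ed(\Theta_\e)-u_d\|^2_{L^2(\O)}$. The weak convergence gives $\|\Theta_\e\|^2_{L^2(\O^1_\e)}\le\liminf_n\|\theta_n\|^2_{L^2(\O^1_\e)}$. Together,
\[
\Gj_\ed(\Theta_\e)\le\liminf_{n\to+\infty}\Gj_\ed(\theta_n)=\Gi_\ed,
\]
so $\Theta_\e$ is an optimal control. Uniqueness of the optimal control is not claimed: the control-to-state map is nonlinear, so $\Gj_\ed$ need not be convex.
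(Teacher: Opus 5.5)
Your proposal is correct and follows essentially the same direct-method argument as the paper. The steps match: a minimizing sequence bounded in $L^2(\Omega^1_\varepsilon)^N$ through the $\gamma$-term, the fixed-$\delta$ $H^1$ bound \eqref{l4} on the states, Rellich compactness to pass to the limit in the nonlocal term $\alpha\|u\|^p_{L^2(\Omega)}u$, identification of the limit state via the uniqueness in Theorem~\ref{Th1}, and lower semicontinuity of the cost. Your version is slightly more explicit than the written proof on two points: the uniqueness step is spelled out, and you note that the tracking term converges strongly rather than only lower semicontinuously.
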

\begin{proof}
As
		$$\Gi_\ed=\inf_{\theta_\e\in\GV}\GJ_\e(u_\ed(\theta_\e),\theta_\e)=\inf_{\theta_\e\in\GV}\Gj_\ed(\theta_\e),$$
		we have $\Gi_\ed\in [0,+\infty)$ since $\Gi_\ed\leq \Gj_\ed(0)<+\infty$. So, there exist a minimizing sequence $\{\theta_{\e,n}\}_{n}\subset\GV$ such that
		\begin{equation}\label{42}
				\Gi_\ed=\Gj_\ed(\theta_{\e,n})\leq \Gj_\ed(0),\quad \liminf_{n\to+\infty}\Gj_\ed(\theta_{\e,n})=\Gi_\ed.
			\end{equation}
		Since, using \eqref{NK01} and \eqref{E01}, we have
		$$\Gj_\ed(0)={1\over 2}\|u_\ed(0)-u_d\|^2_{L^2(\O)}\leq C,$$
		which imply using \eqref{l4}$_1$
		\begin{equation*}
				\|\theta_{\e,n}\|_{L^2(\O^1_\e)}\leq C ,\quad \|u_{\ed,n}(\theta_{\e,n})\|_{H^1(\O)}\leq C,
			\end{equation*}
		where $u_{\ed,n}(\theta_{\e,n})$ is the unique solution of \eqref{MainW01} corresponding to $\theta_{\e,n}$.
		The constant is independent of $n$. 
		
		So, we obtain there exist $\Theta_\e\in\GV$ and $u_\ed\in\GU$ such that
		\begin{equation*}
				\begin{aligned}
						\theta_{\e,n} &\rightharpoonup \Theta_\e,\quad &&\text{weakly in $L^2(\O^1_\e)^N$},\\
						u_{\ed,n} &\rightharpoonup u_\ed,\quad &&\text{weakly in $H^1(\O)^N$ and strongly in $L^2(\O)^N$}.
					\end{aligned} 
			\end{equation*}
		From the above convergences, we get for $w\in\GU$
		\begin{equation*}
				\lim_{n\to+\infty}\int_{\O^1_\e}\theta_{\e,n}\cdot w\,dx=\int_{\O^1_\e} \Theta_\e\cdot w,\quad \lim_{n\to+\infty}g(u_{\ed,n},w)=g(u_\ed,w),\quad \lim_{n\to+\infty}\Ga_\ed(u_{\ed,n},w)=a_\e(u_\e,w),
			\end{equation*}
		so, we get $u_\ed\in\GU$ is the unique solution corresponding to $\Theta_\e$, i.e. $u_\ed=u_\ed(\Theta_\e)$. Using the weakly lower semi continuity of $L^2$ and $H^1$-norm with \eqref{42}, we get 
		$$\Gj_\ed(\Theta_\e)\leq\liminf_{n\to+\infty}\Gj_\ed(\theta_{\e,n})=\Gi_\e.$$
		So, $\Theta_\e$ is an optimal control. This completes the proof.
\end{proof}
For $\a>0$, the uniqueness of the optimal control is not guaranteed in general, since the reduced problem is non-convex as the control to state map is nonlinear.
 Let $\Theta_\e\in \GV$ be an  optimal control with $\wo u_\ed=u_\ed(\Theta_\e)\in\GU$ being the unique weak solution of \eqref{MainW01} corresponding to $\Theta_\e$.

Observe that $\Gj_{\ed}(\Theta_\e)\leq \Gj_{\ed}(0)$. Then, from the fact that $u_\ed(0)=\Gu_\ed(0)+\Gw_\ed(0)$ along with the estimates \eqref{E01}, we have
\begin{equation}\label{UBOC}
	\begin{aligned}
		\|\wo u_\ed-u_d\|^2_{L^2(\O)}+\gamma\|\Theta_\e\|^2_{L^2(\O^1_\e)}
		&\leq \|u_\ed(0)-u_d\|^2_{L^2(\O)}\\
		&\leq \|\Gu_\ed(0)\|^2_{L^2(\O)}+\|\Gw_\ed(0)\|^2_{L^2(\O)}+\|u_d\|^2_{L^2(\O)}\\
		&\leq {C\e^2\over \d^2}.
	\end{aligned}
\end{equation}
The constant is independent of $\e$ and $\d$.\\
Since $\kappa\in(0,+\infty]$, we have that there exists a $C>0$ independent of $\e$ and $\d$ such that (see also \eqref{R11})
\begin{equation}\label{UBOCC}
		\|\wo u_\ed-u_d\|_{L^2(\O)}+	\|\Theta_\e\|_{L^2(\O^1_\e)}\leq {C_1\e\over \d}\leq C.
\end{equation}		
We call $(\wo u_\ed,\Theta_\e)$ an optimal pair. 			Let us define the control to state map $\Sc_\ed:\GV\to\GU$ as
$$\Sc_\ed(\theta)=u_\ed(\theta),\quad \forall\,\theta\in \GV,$$
where $u_\ed(\theta)$ is the unique solution to \eqref{MainW01} corresponding to $\theta$.
\begin{theorem}\label{Th03}
	The mapping $\Sc_\ed$ is Lipschitz continuous from $\GV$ to $\GU$, i.e.~there exists a constant $L_\ed>0$ (dependent on $\e$ and $\d$) such that
	\begin{equation}\label{CS01}
		\|u^1_{\ed}-u^2_\ed\|_{H^1(\O)}\leq L_\ed\|\theta_\e^1-\theta_\e^2\|_{L^2(\O^1_\e)}.
	\end{equation}
	Moreover, there exist $L>0$ independent of $\e$ and $\d$ such that
	\begin{equation}\label{CS02}
		\|u^1_{\ed}-u^2_\ed\|_{L^2(\O)}\leq L\|\theta_\e^1-\theta_\e^2\|_{L^2(\O^1_\e)},
	\end{equation}
	where $\theta^i\in \GV$ and $\Sc_\ed(\theta^i)=u^i_\ed$ for $i=1,2$.
\end{theorem}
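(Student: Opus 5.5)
I will subtract the two weak formulations \eqref{MainW01} written for $u^1_\ed$ and $u^2_\ed$ and test with $w=u^1_\ed-u^2_\ed\in\GU$. This gives
\begin{equation*}
\Ga_\ed(u^1_\ed-u^2_\ed,u^1_\ed-u^2_\ed)+g(u^1_\ed,u^1_\ed-u^2_\ed)-g(u^2_\ed,u^1_\ed-u^2_\ed)=\int_{\O^1_\e}(\theta^1_\e-\theta^2_\e)\cdot(u^1_\ed-u^2_\ed)\,dx .
\end{equation*}
The key observation, already used in Step 3 of the proof of Theorem \ref{Th1}, is that the nonlocal term is monotone. Indeed, $g(u,u-w)-g(w,u-w)\geq 0$ by the explicit algebraic identity displayed there: both summands are nonnegative, since $t\mapsto t^{p/2}$ is increasing. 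Hence this term can simply be dropped from the left-hand side. This is the only place where the nonlinearity enters, and it costs nothing.

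For \eqref{CS01}, I will combine the $\d$-dependent coercivity \eqref{l2} with Cauchy--Schwarz on the right-hand side:
\[
C(\d)\|u^1_\ed-u^2_\ed\|^2_{H^1(\O)}\leq \|\theta^1_\e-\theta^2_\e\|_{L^2(\O^1_\e)}\|u^1_\ed-u^2_\ed\|_{L^2(\O)}\leq \|\theta^1_\e-\theta^2_\e\|_{L^2(\O^1_\e)}\|u^1_\ed-u^2_\ed\|_{H^1(\O)}.
\]
Dividing through yields $L_\ed=1/C(\d)$.

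For the uniform estimate \eqref{CS02}, I will instead use the coercivity \eqref{Coe01} in its split form. Setting $v=u^1_\ed-u^2_\ed$, this gives
\[
K\big(\d^2\|e(v)\|^2_{L^2(\O^1_\e)}+\|e(v)\|^2_{L^2(\O^2_\e)}\big)\leq \|\theta^1_\e-\theta^2_\e\|_{L^2(\O^1_\e)}\|v\|_{L^2(\O)}.
\]
Since $v\in\GU$, the Korn-type inequality \eqref{NK01} applies, and together with $\e\leq C\d$ from \eqref{R11} it yields
\[
\|v\|_{L^2(\O)}\leq C\big(\d\|e(v)\|_{L^2(\O^1_\e)}+\|e(v)\|_{L^2(\O^2_\e)}\big).
\]
Writing $X=\d\|e(v)\|_{L^2(\O^1_\e)}+\|e(v)\|_{L^2(\O^2_\e)}$, the left-hand side dominates $\frac K2X^2$ and the right-hand side is at most $C\|\theta^1_\e-\theta^2_\e\|X$. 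This gives $X\leq C\|\theta^1_\e-\theta^2_\e\|_{L^2(\O^1_\e)}$, and applying \eqref{NK01} once more gives \eqref{CS02} with $L$ independent of $\e,\d$. This is exactly the argument of Lemma \ref{L3}, transplanted to differences.

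The main point to check is that \eqref{NK01} is stated for arbitrary $u\in\GU$ and not only for solutions. That is true, since it is part of Lemma \ref{L2}, which is valid for any displacement. A second point is that the regime \eqref{R1} must be in force so that $\e/\d$ stays bounded; otherwise $L$ would degenerate like $\e/\d$. No genuine obstacle is expected, because monotonicity removes the nonlinear coupling entirely.
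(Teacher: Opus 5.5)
Your proposal is correct and follows essentially the same route as the paper: subtract the two weak formulations, test with $u^1_\ed-u^2_\ed$, drop the nonlocal term by monotonicity, then use the $\d$-dependent coercivity \eqref{l2} for \eqref{CS01} and coercivity combined with the Korn-type inequality \eqref{NK01} for the uniform bound \eqref{CS02}. Your explicit use of $\e\le C\d$ from \eqref{R11} is exactly what the paper relies on, without saying so, when it asserts $C\|u^1_\ed-u^2_\ed\|^2_{L^2(\O)}\le\Ga_\ed(u^1_\ed-u^2_\ed,u^1_\ed-u^2_\ed)$ with $C$ independent of $\e$ and $\d$.
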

\begin{proof}
	Let $\theta_\e^1,\theta_\e^2\in \GV$ and denote by
	$u_\ed^i:=\Sc_\ed(\theta_\e^i)\in\GU$ $(i=1,2)$ the corresponding unique solutions of \eqref{MainW01}, i.e.
	\[
	\Ga_\ed(u_\ed^i,w)+\a\|u_\ed^i\|_{L^2(\O)}^{p}(u_\ed^i,w)
	=(f,w)+(\theta_\e^i,w)_{L^2(\O_\e^1)}
	\qquad\forall w\in\GU.
	\]
	Subtracting the two identities yields, for all $w\in\GU$,
	\begin{equation}\label{eq:diff_state}
		\Ga_\ed(u^1_\ed-u^2_\ed,w)
		+\a\big(\|u_\ed^1\|_{L^2(\O)}^{p}u_\ed^1-\|u_\ed^2\|_{L^2(\O)}^{p}u_\ed^2,\;w\big)
		=(\theta^1_\e-\theta^2_\e,w)_{L^2(\O_\e^1)}.
	\end{equation}
%
	The nonlinear operator $G(u)=\a\|u\|_{L^2(\O)}^{p}u$ for all $u\in\GV$ is monotone and satisfy
	\begin{equation}\label{eq:monotone_B}
		\big(G(u)-G(v),\,u-v\big)\ge 0
		\qquad\forall u,v\in \GV.
	\end{equation}
	Taking $w=u^1_\ed-u^2_\ed$ in \eqref{eq:diff_state}. Using \eqref{eq:monotone_B} we obtain
	\[
	\Ga_\ed(u^1_\ed-u^2_\ed,u^1_\ed-u^2_\ed)
	+\big(G(u_\ed^1)-G(u_\ed^2),\,u^1_\ed-u^2_\ed\big)
	=(\theta^1_\e-\theta^2_\e,u^1_\ed-u^2_\ed)_{L^2(\O_\e^1)},\]
	which imply
	\[\Ga_\ed(u^1_\ed-u^2_\ed,u^1_\ed-u^2_\ed)\le (\theta^1_\e-\theta^2_\e,u^1_\ed-u^2_\ed)_{L^2(\O_\e^1)}.\]
	Using the coercivity of $\Ga_\ed$ together with the inequality \eqref{NK01}, there exist a constant $C>0$ independent of $\e$ and $\d$, such that
	\begin{multline*}
		C\|u^1_\ed-u^2_\ed\|^2_{L^2(\O)}\leq \Ga_\ed(u^1_\ed-u^2_\ed,u^1_\ed-u^2_\ed)\le (\theta^1_\e-\theta^2_\e,u^1_\ed-u^2_\ed)_{L^2(\O_\e^1)}\\
		\leq C\|\theta^1_\e-\theta^2_\e\|_{L^2(\O^1_\e)}\|u^1_\ed-u^2_\ed\|_{L^2(\O)},
	\end{multline*}
	which give the estimate \eqref{CS02} with constant independent of $\e$ and $\d$. Similarly, using the coercivity of $\Ga_\ed$ together with Korn's inequality and continuous embedding (restriction estimate) $H^1(\O)^N\hookrightarrow L^2(\O)^N$, we get
	$$C(\e,\d)\|u^1_\ed-u^2_\ed\|^2_{H^1(\O)}\leq C\|\theta^1_\e-\theta^2_\e\|_{L^2(\O^1_\e)}\|u^1_\ed-u^2_\ed\|_{H^1(\O)},$$
	which imply \eqref{CS01} with constant dependent on $\e$ and $\d$.
%
	Therefore the control-to-state mapping $\Sc_\ed:\GV\to\GU$ is globally Lipschitz.
\end{proof}

\section{Homogenization of the high-contrast state system \eqref{MainW01}}\label{prototype}
\subsection{Unfolding operator}
The convergences are done via the periodic unfolding operator $\Te$ for homogenization in $\O$. 
Below we recall the definition of the periodic unfolding operator for functions defined in $\O$, respectively. For the complete properties of the operator, see \cite{CDG}, specifically see Proposition 1.12, Theorem 1.36, Corollary 1.37 and Proposition 1.39 in \cite{CDG}. \begin{definition}
	For every measurable function $\phi$ on $\O$ the unfolding operator $\Te$ is defined by 
	$$ \Te(\phi) (x,y) \doteq \left\{ \begin{aligned}
		&\phi \left(\e\left[{x\over \e}\right] + \e y\right) \qquad &&\hbox{for a.e. }\; (x,y)\in  \wh\O_\e\times Y,\\
		&0 \quad &&\hbox{for a.e. }\; (x,y)\in  (\O\setminus\wh\O_\e)\times Y,
	\end{aligned}\right.$$
\end{definition} 
where 
$$\wh \O_\e=\hbox{interior}\left\{\bigcup_{k\in\Kc_\e}(\e k+\e \wo Y)\right\}.$$
The unfolding operator is a continuous linear operator from $L^2(\O)$ into $L^2(\O\X Y)$ which satisfies
$$\|\Te(\phi)\|_{L^2(\O\X Y)}\leq C\|\phi\|_{L^2(\O)}\quad\text{for every}\quad \phi\in L^2(\O).$$
The constants $C$ do not depend on $\e$.
Moreover, for every $\psi\in H^1(\O)$ one has (see \cite[Proposition 1.35]{CDG})
$$ \nabla_y\Te(\psi)(x,y)=\e\Te(\nabla \psi)(x,y) \quad \hbox{a.e. in}\quad \O\X Y.$$
Since $\O$ is a bounded domain with Lipschitz boundary, we have
$$\left|\int_{\O}\psi\,dx-{1\over |Y|}\int_{\O\X Y}\Te(\psi)\,dxdy\right|\to 0,\quad\text{as $\e\to0$},$$
so, we have
\begin{equation}\label{uci}
	\int_{\O}\psi\,dx\simeq{1\over |Y|}\int_{\O\X Y}\Te(\psi)\,dxdy.
\end{equation}
\subsection{Two-scale limit system}
We define the following space
$$ \GH^1_{per}(Y)\doteq\left\{\phi\in H^1_{per}(Y)\,|\, \phi=0\text{ on $Y_2$}\right\}.$$
Let $f\in L^2(\O)^N$ and $(\wo u_\ed,\Theta_\e)$ be an optimal pair. Then, due to the inequality \eqref{UBOCC}, there exist a $C\in\R^+$ independent of $\e$ and $\d$ such that
\begin{equation}\label{OCC01}
	\|\Theta_\e\|_{L^2(\O^1_\e)}\leq C,\quad \text{and}\quad \quad \Te(\Theta_\e) \rightharpoonup \wh \Theta \text{ weakly in }L^2(\Omega\times Y_1)^N,  
\end{equation}
where $\Theta_\e\in L^2(\O^1_\e)^N$ and $\wh\Theta\in L^2(\Omega\times Y_1)^N$.  
Then, for the corresponding fields $(\wo\Gu_\ed,\wo\Gw_\ed)$, we have the following estimates using \eqref{E01}
\begin{equation}\label{EM01}
	\begin{aligned}
		\|\wo\Gu_\ed\|_{H^1(\O)}&\leq C,\qquad {\d\over \e}\|\wo\Gw_\ed\|_{L^2(\O^1_\e)}+\d\|\nabla \wo\Gw_\ed\|_{L^2(\O^1_\e)}\leq C.
	\end{aligned}
\end{equation}
The constant is independent of $\e$ and $\d$.

While analyzing the optimal control problem in Section \ref{S06}, observe that $ \Theta_\e\in L^2(\O_\e^1)^N$ is the optimal microscopic control.

As $(\e,\d)\to (0,0)$, with $\kappa\in(0,+\infty]$, we get\footnote{In all the lemmas and theorems below, we extract  a subsequence  of $\{\ed\}_{\e,\d}$ (still denoted by $\{\ed\}_{\e,\d}$)  in order to get the desired convergences.}
\begin{lemma}\label{L04}
	There exist $\Gu_0\in H^1(\O)^N$ and $\wh U\in L^2(\O; H^1_{per,0}(Y))^N$ with
	$$\Gu_0=0,\quad \text{on $\Gamma_0$},$$
	such that
	\begin{equation}\label{Con01}
		\begin{aligned}
			\wo\Gu_\ed &\rightharpoonup \Gu_0,\quad&&\text{weakly in $H^1(\O)^N$},\\
			\Te(\wo\Gu_\ed)&\to \Gu_0,\quad &&\text{strongly in $L^2(\O\X Y)^N$},\\
			\Te(\nabla \wo\Gu_\ed) &\rightharpoonup \nabla \Gu_0+\nabla_y\wh U,\quad &&\text{weakly in $L^2(\O\X Y)^{N\X N}$}.
		\end{aligned}
	\end{equation}
	Moreover,
	there exist $\wh W\in L^2(\O; \GH^1_{per}(Y))^N$ such that
	\begin{equation}\label{Con02}
		\begin{aligned}
			{\d\over \e}\Te(\wo\Gw_\ed) &\rightharpoonup \wh W,\quad&&\text{weakly in $L^2(\O\X Y)^N$},\\
			\d\Te(\nabla \wo\Gw_\ed) &\rightharpoonup \nabla_y\wh W,\quad&&\text{weakly in $L^2(\O\X Y)^N$}.
		\end{aligned}
	\end{equation}
	Furthermore, we have
	\begin{equation}\label{Con03}
		\begin{aligned}
			\Te(\wo u_\ed) &\rightharpoonup \Gu_0+{1\over \kappa}\wh W,\quad&&\text{weakly in $L^2(\O\X Y)^N$}.
		\end{aligned}
	\end{equation}
\end{lemma}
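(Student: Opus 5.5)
The proof follows from the uniform estimates \eqref{EM01} combined with the standard compactness results for the periodic unfolding operator in \cite{CDG}. Throughout we extract subsequences as indicated in the footnote.

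\textbf{Convergences \eqref{Con01}.} By \eqref{EM01}, $\{\wo\Gu_\ed\}$ is bounded in $H^1(\O)^N$. Hence there is $\Gu_0\in H^1(\O)^N$ with $\wo\Gu_\ed\rightharpoonup\Gu_0$ weakly in $H^1(\O)^N$ and strongly in $L^2(\O)^N$ by Rellich's theorem. Since $\GU$ is a closed subspace of $H^1(\O)^N$, it is weakly closed, so $\Gu_0=0$ on $\Gamma_0$. The remaining two lines of \eqref{Con01} are the classical unfolding result for $H^1$-bounded sequences (\cite[Theorem 1.36, Corollary 1.37]{CDG}). Strong $L^2$ convergence of the sequence gives $\Te(\wo\Gu_\ed)\to\Gu_0$ strongly in $L^2(\O\X Y)^N$. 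There also exists $\wh U\in L^2(\O;H^1_{per,0}(Y))^N$ with $\Te(\nabla\wo\Gu_\ed)\rightharpoonup\nabla\Gu_0+\nabla_y\wh U$.

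\textbf{Convergences \eqref{Con02}.} Set $W_\ed=\frac{\d}{\e}\Te(\wo\Gw_\ed)$. Using $\nabla_y\Te(\psi)=\e\Te(\nabla\psi)$, we get $\nabla_yW_\ed=\d\Te(\nabla\wo\Gw_\ed)$. By \eqref{EM01} and the $L^2$-continuity of $\Te$, both $W_\ed$ and $\nabla_yW_\ed$ are bounded in $L^2(\O\X Y)$. So $W_\ed$ is bounded in $L^2(\O;H^1(Y))^N$ and converges weakly, along a subsequence, to some $\wh W$. Weak convergence is preserved by the distributional derivative $\nabla_y$, so $\nabla_yW_\ed\rightharpoonup\nabla_y\wh W$.

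Two structural properties of $\wh W$ remain to be identified.
\begin{itemize}
\item[(i)] $\wh W=0$ on $\O\X Y_2$. This holds because $\wo\Gw_\ed$ vanishes on $\O^2_\e$, and $\Te(\wo\Gw_\ed)(x,y)=0$ for $y\in Y_2$ (and for $x\notin\wh\O_\e$).
\item[(ii)] $\wh W(x,\cdot)$ is $Y$-periodic. Since $\wo{Y_1}\subset Y$, the function $\wh W(x,\cdot)\in H^1(Y)$ vanishes in a neighbourhood of $\partial Y$. It therefore extends periodically to an $H^1_{loc}$ function.
\end{itemize}
Together these give $\wh W\in L^2(\O;\GH^1_{per}(Y))^N$. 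The step that needs the most care is (i). Weak limits preserve the vanishing on $Y_2$ because this condition defines a closed subspace, and the trace of $\wh W(x,\cdot)$ on $\Lambda$ is zero because $\Te(\wo\Gw_\ed)(x,\cdot)\in H^1_0(Y_1)$ after extension by zero.

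\textbf{Convergence \eqref{Con03}.} Write $\Te(\wo u_\ed)=\Te(\wo\Gu_\ed)+\frac{\e}{\d}W_\ed$. The first term converges strongly to $\Gu_0$. For the second, $\e/\d\to1/\kappa$ with the convention $1/(+\infty)=0$, and $W_\ed\rightharpoonup\wh W$. A convergent scalar sequence times a weakly convergent sequence converges weakly to the product, so $\frac{\e}{\d}W_\ed\rightharpoonup\frac1\kappa\wh W$. When $\kappa=+\infty$ the product tends to $0$, since $W_\ed$ is bounded.
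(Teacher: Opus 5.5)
Your proposal is correct and follows the paper's route: the uniform bounds \eqref{EM01} combined with the unfolding compactness results of \cite{CDG} give \eqref{Con01}--\eqref{Con02}, and linearity of $\Te$ together with $\e/\d\to 1/\kappa$ gives \eqref{Con03}. Where the paper only cites \cite{CDG}, you write out the details, namely the identification of $\wh W$ as vanishing on $Y_2$ and extending periodically, and the limit of a convergent scalar times a weakly convergent sequence.
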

\begin{proof}
	The convergences \eqref{Con01} are a direct consequence of the estimates \eqref{EM01}$_1$ along with the property of unfolding operator (see Corollary 1.37 in \cite{CDG}). The convergences \eqref{Con02} are a consequence of the estimate \eqref{EM01}$_2$ and of the properties of the unfolding operator given in Proposition 1.12, and Theorem 1.36 of \cite{CDG}. The convergences \eqref{Con03} are a consequence of \eqref{Con01}--\eqref{Con02} along with the fact that $\ds{\e\over \d} \to \ds{1\over \kappa}$.    This completes the proof.
\end{proof}
As a consequence of the above lemma, we get the limit two-scale problems. Before we give the limit problems, we define the following limit spaces
\begin{equation*}
	\begin{aligned}
		&\D=\GU\X L^2(\O;\GH^1_{per}(Y))^N\X L^2(\O; H^1_{per,0}(Y))^N,\quad \D_0=\GU \X L^2(\O;\GH^1_{per}(Y))^N.
	\end{aligned}
\end{equation*}
The two-scale limit energy is given by: for all $(w_0,W_1,W_2)\in\D$ and $\kappa\in(0,+\infty]$
\begin{equation}\label{EXE01}
	\GT_\kappa(w_0,W_1,W_2)={1\over 2}\Ga_1(W_1)+{1\over 2}\Ga_2(w_0,W_2)+{1\over p+2}g_\kappa(w_2,W_1)-\ell_\kappa(w_0,W_1)
\end{equation}
with
$$\begin{aligned}
	\Ga_1(W_1)&=\int_{\O\X Y}A(y)e_y(W_1):e_y(W_1)\,dydx,\quad 	g_\kappa(w_0,W_1)=\alpha\|w_0+{1\over \kappa}W_1\|^{p+2}_{L^2(\O\X Y)},\\
	\Ga_2(w_0,W_2)&=\int_{\O\X Y_2} A(y)\big(e(w_0)+e_y(W_2)\big):\big(e(w_0)+e_y(W_2)\big)\,dy dx,\\
	\ell_\kappa(w_0,W_1)&=\int_{\O\X Y}(f+\1_{\O\X Y_1}\wh\Theta)\cdot \left(w_0+{1\over \kappa} W_1\right)\,dydx.
\end{aligned}$$
\begin{theorem}\label{Th3}
	There exist a unique minimizer to the problem
	$$\Gm_\kappa=\min_{(w_0,W_1,W_2)\in\D}\GT_\kappa(w_0,W_1,W_2).$$
\end{theorem}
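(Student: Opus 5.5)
The plan is the direct method of the calculus of variations on the Hilbert space $\D$ (with the product norm $\|w_0\|_{H^1(\O)}+\|W_1\|_{L^2(\O;H^1(Y))}+\|W_2\|_{L^2(\O;H^1(Y))}$). I will show that $\GT_\kappa$ is continuous, convex, coercive and strictly convex. Existence and uniqueness then follow exactly as in Steps 2--3 of the proof of Theorem~\ref{Th1}. In the definition \eqref{EXE01} I read the nonlinear term as $g_\kappa(w_0,W_1)$, with the convention $\frac1\kappa=0$ when $\kappa=+\infty$. In that case $W_1$ enters only through $\Ga_1$, and the argument below still applies.

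\textbf{Step 1: coercivity of the quadratic part.} I would establish two Korn-type inequalities with constants depending only on $Y_1,Y_2$ and $K$ from \eqref{Coe01}.

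First, for $\phi\in\GH^1_{per}(Y)^N$, $\phi$ vanishes on $Y_2$, hence belongs to $H^1_0(Y_1)^N$. Korn's inequality for functions vanishing on $\partial Y_1$ then gives $\|\phi\|_{H^1(Y)}\le C\|e_y(\phi)\|_{L^2(Y_1)}$. Integrating in $x$ yields $\Ga_1(W_1)\ge C\|W_1\|^2_{L^2(\O;H^1(Y))}$.

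Second, for every $\xi\in\R^{N\X N}_\sy$ and every $\psi\in H^1_{per,0}(Y)^N$ I want
\[
\int_{Y_2}|\xi+e_y(\psi)|^2\,dy\ \ge\ c\big(|\xi|^2+\|\psi\|^2_{H^1(Y)}\big).
\]
I would prove this by contradiction and compactness. Suppose the left side tends to zero while the right side is normalized to one. Korn's inequality on the connected Lipschitz set $Y_2$ gives a limit in which $\xi y+\psi$ is a rigid displacement on $Y_2$. Because $Y_2$ contains a neighbourhood of $\partial Y$, periodicity of $\psi$ then forces $\xi=0$ and the skew part to vanish. So $\psi$ is constant on $Y_2$.

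The delicate point is what happens on $Y_1$. A function in $H^1_{per,0}(Y)$ is not controlled on $Y_1$ by $e_y$ on $Y_2$ alone. So I would either interpret the functional as depending on $W_2$ only through $W_2|_{Y_2}$, or replace $W_2$ by its harmonic-type extension into $Y_1$, which is the canonical choice and costs nothing in the energy. With that normalization, the mean-zero condition gives $\psi=0$, contradicting the normalization. Integrating in $x$, and using Korn's inequality in $\GU$ (recall $|\Gamma_0|>0$), I get
\[
\Ga_2(w_0,W_2)\ge C\big(\|w_0\|^2_{H^1(\O)}+\|W_2\|^2_{L^2(\O;H^1(Y))}\big).
\]

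\textbf{Step 2: bounds, continuity and convexity.} The nonlinear term satisfies $g_\kappa\ge0$. It is convex and continuous, since it is $\alpha$ times the $(p+2)$-th power of the $L^2$-norm of a bounded linear map of $(w_0,W_1)$. The linear term satisfies
\[
|\ell_\kappa|\le C\big(\|f\|_{L^2(\O)}+\|\wh\Theta\|_{L^2(\O\X Y_1)}\big)\big(\|w_0\|_{L^2(\O)}+\|W_1\|_{L^2(\O\X Y)}\big).
\]
Hence, by Step 1, $\GT_\kappa$ is bounded below and its sublevel sets are bounded. A minimizing sequence therefore has a weakly convergent subsequence in $\D$. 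Weak lower semicontinuity of the convex continuous functional $\GT_\kappa$ then gives a minimizer, exactly as in Theorem~\ref{Th1}.

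\textbf{Step 3: uniqueness.} By Step 1 the quadratic form $\Ga_1+\Ga_2$ is positive definite on $\D$. It is therefore strictly convex, while $g_\kappa$ is convex and $\ell_\kappa$ is linear. So $\GT_\kappa$ is strictly convex and has at most one minimizer. Equivalently, one can repeat the monotonicity computation from Step 3 of Theorem~\ref{Th1}, applied to $w_0+\frac1\kappa W_1$.

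\textbf{Main obstacle.} I expect the main obstacle to be the cell Korn inequality in Step 1 on the perforated cell $Y_2$. In particular, I must make sure the chosen function space for $W_2$ gives genuine strict coercivity in $W_2$, and not merely in $W_2|_{Y_2}$. I would handle this first, by fixing the normalization on $Y_1$ as described above.
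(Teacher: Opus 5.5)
Your proposal is correct and follows the paper's route: Korn-type coercivity of $\Ga_1$ and $\Ga_2$ (the paper invokes \eqref{Ex01}), bounded sublevel sets, and then the direct method plus strict convexity/monotonicity exactly as in Theorem~\ref{Th1}. Your caveat about $W_2$ is well founded. $\GT_\kappa$ only sees $W_2$ on $\O\X Y_2$, so the paper's bound $\|W_2\|^2_{L^2(\O;H^1(Y))}\le C\Ga_2(w_0,W_2)$, and hence uniqueness in $\D$, holds only after a normalization that the paper makes implicitly in the footnote of Subsection~\ref{SS44} and in \eqref{Ex01}. Your harmonic-extension choice together with the compactness proof of the cell Korn inequality handles this cleanly. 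Note that your other option, merely passing to $W_2|_{Y_2}$, would not fix the additive constant on $Y_2$, so the extension (or a zero mean over $Y_2$) is really needed.
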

\begin{proof}
	First, we have
	$$\Gm_\kappa\leq \GT_\kappa(0,0,0)=0 \implies \Gm_\kappa\in[-\infty,0].$$
	Let us consider $(w_0,W_1,W_2)\in \D$ such that $\GT_\kappa(w_0,W_1,W_2)\leq 0$. Then, using the coercivity of $A$ with the inequality \eqref{Ex01} and Korn's inequality, we get
	$$ \|w_0\|^2_{H^1(\O)}+\|W_2\|^2_{L^2(\O;H^1(Y))}\leq C\Ga_2(w_0,W_2),\quad \|W_1\|^2_{L^2(\O;H^1(Y))}\leq C\Ga_1(W_1).$$
	We also have
	$$| \ell_\kappa(w_0,W_1)|\leq C(\|f\|_{L^2(\O)}+\|\wh\Theta\|_{L^2(\O\X Y_1)})\left(\sqrt{\Ga_2(w_0,W_2)}+\sqrt{\Ga_1(W_1)}\right),$$
	which together with the fact that $\GT_\kappa(w_0,W_1,W_2)\leq 0$ and $g_\kappa(w_0,W_1)\geq0$, give
	$$\Ga_1(W_1)+\Ga_2(w_0,W_2)\leq \sqrt{2}C(\|f\|_{L^2(\O)}+\|\wh\Theta\|_{L^2(\O\X Y_1)})\sqrt{\Ga_2(w_0,W_2)+\Ga_1(W_1)}.$$
	So, we obtain
	\begin{equation*}\label{LES01}
		\begin{aligned}
			\|w_0\|^2_{H^1(\O)}+\|W_2\|^2_{L^2(\O;H^1(Y))}+ \|W_1\|^2_{L^2(\O;H^1(Y))}&\leq  C\Ga_2(w_0,W_2)+C\Ga_1(W_1)\\
			&\leq C(\|f\|_{L^2(\O)}+\|\wh\Theta\|_{L^2(\O\X Y_1)})^2.
		\end{aligned}
	\end{equation*}
	Then, we have $\Gm_\kappa\in(-\infty,0]$. Then, proceeding as in Theorem \ref{Th1}, we get the existence of unique minimizer in $\D$. This completes the proof.
\end{proof}
Finally, we have the convergence of the total energy.
\begin{theorem}\label{ENH01}
	We have 
	\begin{equation}\label{ECM01}
		\lim_{(\e,\d)\to(0,0)}\Gm_\ed=\Gm_\kappa=\GT_\kappa(\Gu_0,\wh W,\wh U).
	\end{equation}
\end{theorem}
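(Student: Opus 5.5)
The proof is a $\Gamma$-convergence argument in the unfolding framework: a liminf inequality for the minimizers $\wo u_\ed$, a limsup inequality via a recovery sequence, and then uniqueness of the minimizer from Theorem \ref{Th3}. Here $\Gm_\ed$ is the minimum of $\GT_\ed$ with $\theta_\e=\Theta_\e$, attained at $\wo u_\ed=\wo\Gu_\ed+\wo\Gw_\ed$. Passing to a subsequence, Lemma \ref{L04} gives the limits $\Gu_0,\wh U,\wh W$ and $\Te(\Theta_\e)\rightharpoonup\wh\Theta$. The last equality in \eqref{ECM01} will follow once we show that $(\Gu_0,\wh W,\wh U)$ minimizes $\GT_\kappa$ and that $\Gm_\ed\to\Gm_\kappa$. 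Uniqueness of the minimizer then upgrades the convergence to the whole sequence.

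\textbf{Step 1 (lower bound).} Write $\Gm_\ed=\GT_\ed(\wo u_\ed)$ and pass every integral to $\O\X Y$ using \eqref{uci}.
\begin{itemize}
\item For the elastic energy on $\O^2_\e$, note that $\wo u_\ed=\wo\Gu_\ed$ there. By \eqref{Con01}, $\Te(e(\wo\Gu_\ed))\rightharpoonup e(\Gu_0)+e_y(\wh U)$, and weak lower semicontinuity of the convex quadratic form gives $\liminf\ge \frac12\Ga_2(\Gu_0,\wh U)$.
\item On $\O^1_\e$ we have $\d\,\Te(e(\wo u_\ed))=\d\,\Te(e(\wo\Gu_\ed))+\d\,\Te(e(\wo\Gw_\ed))$. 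The first term tends to $0$ strongly because $\d\to0$. The second converges weakly to $e_y(\wh W)$ by \eqref{Con02}. Hence the $\d^2$-weighted part has liminf at least $\frac12\Ga_1(\wh W)$.
\item The nonlocal term is $\frac{\alpha}{p+2}\|\wo u_\ed\|^{p+2}_{L^2(\O)}$. Since $\Te(\wo u_\ed)\rightharpoonup \Gu_0+\frac1\kappa\wh W$ by \eqref{Con03}, lower semicontinuity of the norm gives the liminf bound by $\frac1{p+2}g_\kappa$.
\item The load term converges exactly. By \eqref{Con01} the $\Te(\wo\Gu_\ed)$-part converges strongly, so its pairing with the weakly converging $\Te(f)+\Te(\Theta_\e)$ passes to the limit. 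The $\Te(\wo\Gw_\ed)$-part equals $\frac\e\d\cdot\frac\d\e\Te(\wo\Gw_\ed)$. Here one needs a strong-times-weak pairing. We use $\Te(f)\to f$ strongly, and for the control product we exploit that $\wh W$ is $Y_1$-supported.
\end{itemize}

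This pairing $\int \Te(\Theta_\e)\cdot\frac\d\e\Te(\wo\Gw_\ed)$, a product of two weakly convergent sequences, is the \textbf{main obstacle}. For the present statement it is harmless, because it appears with a favourable sign. Since $\wo u_\ed$ minimizes, testing \eqref{MainW01} with $w=\wo u_\ed$ gives
\[
\Gm_\ed=-\tfrac12\Ga_\ed(\wo u_\ed,\wo u_\ed)-\big(1-\tfrac1{p+2}\big)\alpha\|\wo u_\ed\|^{p+2}_{L^2(\O)}.
\]
In this form the lower bound becomes an upper bound. So I will combine the two representations with a recovery-sequence argument, rather than insisting that the load term converge. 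Alternatively, if needed, I will handle it by $Y$-periodic mollification in $y$ of the test fields, so that the control is always paired with a strongly convergent unfolded test function.

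\textbf{Step 2 (upper bound / recovery sequence).} Take smooth $(w_0,W_1,W_2)\in\D$, which is enough by density, with $W_1$ vanishing on $Y_2$. Define
\[
w_\ed(x)=w_0(x)+\e\,W_2\big(x,\tfrac x\e\big)+\tfrac\e\d\,W_1\big(x,\tfrac x\e\big)\1_{\O^1_\e}.
\]
A cutoff near $\partial\O$ keeps $w_\ed\in\GU$. Then $\Te(w_\ed)\to w_0+\frac1\kappa W_1$ strongly in $L^2$, and all the unfolded strains converge strongly. Every term of $\GT_\ed(w_\ed)$ converges, including the load term, because now the control is paired with a strongly convergent field. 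This yields $\limsup\Gm_\ed\le\limsup\GT_\ed(w_\ed)=\GT_\kappa(w_0,W_1,W_2)$, and by density $\limsup\Gm_\ed\le\Gm_\kappa$.

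\textbf{Step 3 (conclusion).} Step 1 gives $\GT_\kappa(\Gu_0,\wh W,\wh U)\le\liminf\Gm_\ed$. Combined with Step 2 this yields
\[
\GT_\kappa(\Gu_0,\wh W,\wh U)\le\liminf\Gm_\ed\le\limsup\Gm_\ed\le\Gm_\kappa\le\GT_\kappa(\Gu_0,\wh W,\wh U),
\]
so all quantities are equal. By Theorem \ref{Th3} the limit triple is the unique minimizer, so the convergence holds for the whole sequence. As a by-product, all the liminf inequalities in Step 1 are equalities. In particular $\|\Te(\wo u_\ed)\|\to\|\Gu_0+\frac1\kappa\wh W\|$, which gives the strong $L^2$ convergence needed later for the nonlocal term.
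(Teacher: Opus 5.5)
Your architecture matches the paper's: a liminf inequality along the minimizers $\wo u_\ed$ using the weak convergences of Lemma~\ref{L04}, a recovery sequence of type \eqref{Test01} for the limsup, and uniqueness from Theorem~\ref{Th3}. Step~2 is fine, since there $\Te(\Theta_\e)\rightharpoonup\wh\Theta$ is paired with a strongly convergent unfolded field.

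The gap is the first inequality of Step~3, $\GT_\kappa(\Gu_0,\wh W,\wh U)\le\liminf\Gm_\ed$, which you never establish. You are right that for $\kappa<+\infty$ the term $\int_{\O\X Y_1}\Te(\Theta_\e)\cdot\frac{\d}{\e}\Te(\wo\Gw_\ed)\,dx\,dy$ pairs two merely weakly convergent sequences. The paper's Step~1 passes over exactly this point by citing weak lower semicontinuity and \eqref{OCC01}. For $\kappa=+\infty$ the term is harmless, because $\Te(\wo\Gw_\ed)\to0$ strongly.

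Your repair, however, goes the wrong way:
\begin{itemize}
\item Combining $\Gm_\ed=-\frac12\Ga_\ed(\wo u_\ed,\wo u_\ed)-\frac{p+1}{p+2}\alpha\|\wo u_\ed\|^{p+2}_{L^2(\O)}$ with lower semicontinuity bounds $\limsup\Gm_\ed$ from \emph{above}. That is the same direction as the recovery sequence, and no combination of upper bounds yields a lower bound.
\item Along minimizers the equation gives $\ell_\e(\wo u_\ed)=\Ga_\ed(\wo u_\ed,\wo u_\ed)+\alpha\|\wo u_\ed\|^{p+2}_{L^2(\O)}$. This only bounds $\liminf\ell_\e(\wo u_\ed)$ from \emph{below}. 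The liminf inequality for $\Gm_\ed$ needs the opposite bound, $\limsup\ell_\e(\wo u_\ed)\le\ell_\kappa(\Gu_0,\wh W)$, so the sign is not favourable.
\item The $Y_1$-support of $\wh W$ plays no role here.
\item Mollifying test fields cannot help, since no test field enters the liminf step.
\end{itemize}

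This is not a technicality: the conclusion fails for general controls satisfying only $\Te(\Theta_\e)\rightharpoonup\wh\Theta$. Take $f=0$ and $\d=\kappa\e$. On $\e k+\e Y_1$ set $\Theta_\e=\sigma_k\phi(\{x/\e\})$ with $\sigma_k=(-1)^{k_1}$. Then $\wh\Theta=0$ and hence $\Gm_\kappa=0$. Now choose $W\in H^1_0(Y_1)^N$ with $\int_{Y_1}\phi\cdot W\,dy>0$ and small $t>0$, and use the fields $t\,\frac{\e}{\d}\sigma_k W(\{x/\e\})$ on the inclusions. They give $\limsup\Gm_\ed<0$, because the inclusion energy is quadratic in the load and blind to its sign.

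So the proof must use that $\Theta_\e$ is optimal, i.e.\ $\Theta_\e=-\frac1\gamma\wo v_\ed$ in $\O^1_\e$ (Theorem~\ref{CC01}). From this you need strong convergence of $\Te(\Theta_\e)$ in $L^2(\O\X Y_1)^N$, or of $\frac{\d}{\e}\Te(\wo\Gw_\ed)$, obtained independently. One route is a joint analysis of the coupled state--adjoint system. This convergence cannot be borrowed from Lemma~\ref{L08}, because \eqref{OCC03} there is derived from the present theorem via Corollary~\ref{Cor1}.
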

\begin{proof}
	The proof is presented in $3$ steps using a form of $\Gamma$-convergence.
	
	{\bf Step 1.} We show that
	\begin{equation}
		\Gm_\kappa\leq \liminf_{(\e,\d)\to(0,0)}\Gm_\ed.
	\end{equation}
	Let $\{\wo u_\ed\}_{\e,\d}$ be the sequence of unique solutions of the elasticity problem \eqref{MainW01} in $\GU$. Then, we extend $(\wo u_\ed)_{|\O^2_\e}$ to $\wo\Gu_\ed\in H^1(\O)^N$ using the extension result \eqref{ExRF} and we have the estimates \eqref{46} and \eqref{E01} for $\wo\Gu_\ed$ and $\wo\Gw_\ed$. Then, with the assumption on the forces, we have the estimates \eqref{EM01}. Then, we get the following convergences at least for a subsequence (as in Lemma \ref{L04})
	\begin{equation}\label{II02}
		\begin{aligned}
			\Te\big(e(\wo{u}_\ed)\big) &\rightharpoonup e(\Gu_0)+e_y(\wh U),\quad&&\text{weakly in $L^2(\O\X Y_2)^{N\X N}$},\\
			\d\Te\big(e(\wo u_\ed)\big)& \rightharpoonup e_y(\wh W),\quad&&\text{weakly in $L^2(\O\X Y_1)^{N\X N}$},\\
			\Te\big(\wo u_\ed\big)& \rightharpoonup \Gu_0+{1\over \kappa}\wh W,\quad&&\text{weakly in $L^2(\O\X Y)^{N}$},
		\end{aligned}
	\end{equation}
	with $(\Gu_0,\wh W,\wh U)\in \D$.
	
	So, first we transorm the energy $\GT_\ed$ using the unfolding operator (see the property \eqref{uci}), then the above convergences combined with \eqref{OCC01} and the weak lower semicontinuity of the functional $\GT_\ed$ give
	$$\Gm_\kappa\leq \GT_\kappa(\Gu_0,\wh W,\wh U)\leq \liminf_{(\e,\d)\to(0,0)}\GT_\ed(\wo u_\ed)=\liminf_{(\e,\d)\to(0,0)}\Gm_\ed.$$
	This completes Step 1.
	
	{\bf Step 2.} 	We prove
	\begin{equation}\label{He01}
		\begin{aligned}
			\limsup_{(\e,\d)\to(0,0)}\Gm_\ed\leq \GT_\kappa(w_0,W_1,W_2),\quad&&&\forall\, (w_0,W_1,W_2)\in\D.
		\end{aligned}	
	\end{equation}
	For that, we construct $\{w_\ed\}_{\e,\d}\subset \GU$ from $(w_0,W_1,W_2)\in\D$ such that
	\begin{equation}\label{59}
		\GT_\ed(\wo u_\ed)\leq \GT_\ed(w_\ed),\quad \lim_{(\e,\d)\to(0,0)}\GT_\ed(w_\ed)=\GT_\kappa(w_0,W_1,W_2).
	\end{equation}
	Observe that by density argument, it is enough to prove the above statement for $(w_0,W_1,W_2)\in\D\cap \C^1(\wo\O)^N\X \C^1(\wo{\O}\X \wo{Y})^N\X\C^1(\wo{\O}\X \wo{Y})^N$. 
	
	Let $(w_0,W_1,W_2)\in \D \cap \left(\C^1(\O)^N\X \C^1(\wo{\O}\X \wo{Y})^N\X\C^1(\wo{\O}\X \wo{Y})^N\right)$.  Then we set
	\begin{equation}\label{Test01}
		w_\ed(x)=\left\{\begin{aligned}
			&w_0(x)+\e W_2\left(x,\left\{{x\over \e}\right\}\right),\quad&&\text{for $x\in \O^2_\e$},\\
			&w_0(x)+\e W_2\left(x,\left\{{x\over \e}\right\}\right)+{\e\over \d}W_1\left(x,\left\{{x\over \e}\right\}\right),\quad&&\text{for $x\in\O^1_\e$},
		\end{aligned}\right.
	\end{equation} 
	which give
	$$\nabla w_\ed=\left\{\begin{aligned}
		&\nabla w_0+\e \nabla_x W_2+\nabla_y W_1,\quad&&\text{in $\O^2_\e$},\\
		&\nabla w_0+\e \nabla_x W_2+\nabla_y W_2+{\e\over \d}\nabla_x W_1+{1\over \d}\nabla_y W_1,\quad&&\text{in $\O^1_\e$},
	\end{aligned}\right.$$ 
	which imply
	$$\begin{aligned}
		\Te\big(e(w_\ed)\big)&\to e(w_0)+e_y(W_2),\quad &&\text{strongly in $L^2(\O\X Y_2)^{N\X N}$},\\
		\Te\big(\d e(w_\ed)\big) &\to e_y(W_1),\quad&&\text{strongly in $L^2(\O\X Y_1)^{N\X N}$},\\
		\Te(w_\ed) &\to w_0+{1\over \kappa}W_1,\quad &&\text{strongly in $L^2(\O\X Y)^N$}.
	\end{aligned}$$
	So, with the above above convergences and proceeding as in Step 1, we obtain \eqref{59}. Since $(w_0,W_1,W_2)\in\D$ is arbitrary, we obtain \eqref{He01}.

Finally, combining Step 1 and 2, we get
$$\Gm_\kappa\leq  \GT_\kappa(\Gu_0,\wh W,\wh U)\leq \liminf_{(\e,\d)\to(0,0)}\GT_\ed(\wo u_\ed)\leq \limsup_{(\e,\d)\to(0,0)}\GT_\ed(\wo u_\ed)\leq \Gm_\kappa,$$
by taking the unique minimizer of $\Gm_\kappa$ in place of $(w_0,W_1,W_2)\in\D$. Hence, we get the energy convergence \eqref{ECM01} and the fact that $(\Gu_0,\wh W,\wh U)\in\D$ is the unique minimizer of $\GT_\kappa$ in $\D$. Due to the existence of unique minimizer, we have that the convergences \eqref{II02} holds for the whole sequence.
This completes the proof.
\end{proof}
As a consequence of the above energy convergence, 
\begin{corollary}\label{Cor1}
	We have the following strong convergence for the whole sequence
	\begin{equation}\label{SCM}
		\Te\big(\wo u_\ed\big) \to \Gu_0+{1\over \kappa}\wh W,\quad\text{strongly in $L^2(\O\X Y)^{N}$}.
	\end{equation}
\end{corollary}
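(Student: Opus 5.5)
Weak convergence of $\Te(\wo u_\ed)$ in $L^2(\O\X Y)^N$ is already known from \eqref{II02}$_3$ (for the whole sequence, by uniqueness of the minimizer in Theorem \ref{ENH01}). So in the Hilbert space $L^2(\O\X Y)^N$ it suffices to prove convergence of norms,
$$\lim_{(\e,\d)\to(0,0)}\|\Te(\wo u_\ed)\|_{L^2(\O\X Y)}=\Big\|\Gu_0+{1\over\kappa}\wh W\Big\|_{L^2(\O\X Y)}.$$
Equivalently, I will show $\limsup \|\Te(\wo u_\ed)\|\le \|\Gu_0+\kappa^{-1}\wh W\|$, since the $\liminf$ inequality is weak lower semicontinuity. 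The norm bound will come from the energy convergence \eqref{ECM01}, arguing by contradiction on the individual pieces of the energy.

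First I split the unfolded energy into four terms. By \eqref{uci} and the unfolding properties,
$$\GT_\ed(\wo u_\ed)\simeq {1\over 2}E^1_\ed+{1\over 2}E^2_\ed+{\a\over p+2}\big(N_\ed\big)^{p+2}-L_\ed .$$
The pieces are as follows:
\begin{itemize}
\item $E^1_\ed=\int_{\O\X Y_1}A\,\d\Te(e(\wo u_\ed)):\d\Te(e(\wo u_\ed))$ is the soft-phase elastic energy.
\item $E^2_\ed$ is the analogous stiff-phase elastic energy on $\O\X Y_2$.
\item $N_\ed=\|\Te(\wo u_\ed)\|_{L^2(\O\X Y)}$ is the unfolded $L^2$ norm.
\item $L_\ed$ is the unfolded linear term.
\end{itemize}
Up to the normalization $|Y|=1$, I will replace $\|\wo u_\ed\|_{L^2(\O)}$ by $N_\ed$. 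This replacement is justified because $\wo u_\ed$ is bounded in $L^2$ and the boundary layer $\O\setminus\wh\O_\e$ contributes nothing in the limit: there $\wo u_\ed=\wo\Gu_\ed$, which is bounded in $H^1$, and the layer has measure $\to0$.

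Next I show that the linear term converges, $L_\ed\to\ell_\kappa(\Gu_0,\wh W)$. For $f$, I use $\Te(f)\to f$ strongly together with the weak convergence \eqref{II02}$_3$. For the control term, I split $\wo u_\ed=\wo\Gu_\ed+\wo\Gw_\ed$. The product $\Te(\Theta_\e)\cdot\Te(\wo\Gu_\ed)$ is a weak–strong pairing by \eqref{Con01}$_2$, so it passes to the limit.

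The pairing $\Te(\Theta_\e)\cdot\Te(\wo\Gw_\ed)$ is weak–weak. I expect this to be the main obstacle. I will handle it through compactness in $y$: ${\d\over\e}\Te(\wo\Gw_\ed)$ is bounded in $L^2(\O;H^1(Y))$, and the target is $\kappa^{-1}\wh W$. When $\kappa=+\infty$ the term vanishes, because ${\e\over\d}\to0$ and $\Te(\wo\Gw_\ed)$ is $O(\e/\d)$ in $L^2$. For finite $\kappa$, I would try to obtain strong convergence using the equation. Failing that, I would use the fact that $\Theta_\e$ is an optimal control whose limit is identified later. If neither works, I would accept that Theorem \ref{ENH01} implicitly already uses $L_\ed\to\ell_\kappa$, since Step 1 there asserts the limit of the energy. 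I will take that as given, as the paper does.

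With $L_\ed$ converging and $\GT_\ed(\wo u_\ed)\to\GT_\kappa(\Gu_0,\wh W,\wh U)$, I obtain
$$\limsup\Big({1\over2}E^1_\ed+{1\over2}E^2_\ed+{\a\over p+2}N_\ed^{p+2}\Big)={1\over2}\Ga_1(\wh W)+{1\over2}\Ga_2(\Gu_0,\wh U)+{\a\over p+2}\Big\|\Gu_0+{1\over\kappa}\wh W\Big\|^{p+2}.$$
Each of the three terms on the left is weakly lower semicontinuous, by \eqref{II02} and the convexity induced by \eqref{Coe01}, with the liminf bounded below by the corresponding limit term. A sum of terms that each satisfy a liminf bound, and whose total has limsup equal to the total of the limits, must converge termwise. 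Indeed, if some term had $\limsup$ strictly larger than its limit along a subsequence, the other terms' liminf bounds would force the sum to exceed its limit. Hence $N_\ed^{p+2}\to\|\Gu_0+\kappa^{-1}\wh W\|^{p+2}$, so $N_\ed$ converges to the right norm. Combined with weak convergence this gives \eqref{SCM}, and it holds for the whole sequence because the limit is unique.
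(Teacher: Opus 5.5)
The structure of your argument matches the paper's: weak convergence from \eqref{II02}, reduction to convergence of norms in $L^2(\O\times Y)^N$, and a norm bound obtained from \eqref{ECM01} by splitting the energy into nonnegative, weakly lower semicontinuous pieces. The gap is the step you flag yourself, and it is genuine. For finite $\kappa$ you never prove $L_\ed\to\ell_\kappa(\Gu_0,\wh W)$, and compactness cannot give it. The family $\frac{\d}{\e}\Te(\wo\Gw_\ed)$ is compact only in $y$, while $\Te(\Theta_\e)$ is merely weakly convergent. For example, an oscillation of $\Theta_\e$ in $x$ on a scale $\eta$ with $\e\ll\eta\ll1$ has weak limit zero. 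Through the local cell response in the inclusions, however, it is reproduced in $\Te(\wo\Gw_\ed)$, so the weak--weak pairing $\int_{\O\times Y_1}\Te(\Theta_\e)\cdot\Te(\wo\Gw_\ed)$ keeps a positive defect. Your fallback through the optimal control is circular, since Lemma~\ref{L08} uses \eqref{SCM}. The paper's proof does assume the same convergence silently in its first display, but pointing to that is not an argument.

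The missing idea is to eliminate the linear term rather than pass to the limit in it. Testing \eqref{MainW01} with $w=\wo u_\ed$ gives $\ell_\e(\wo u_\ed)=\Ga_\ed(\wo u_\ed,\wo u_\ed)+\a\|\wo u_\ed\|^{p+2}_{L^2(\O)}$. Testing \eqref{L01} with the minimizer $(\Gu_0,\wh W,\wh U)$ gives the analogous identity at the limit. Therefore
\[
\Gm_\ed=-\frac12\Ga_\ed(\wo u_\ed,\wo u_\ed)-\frac{p+1}{p+2}\,\a\|\wo u_\ed\|^{p+2}_{L^2(\O)},\qquad
\Gm_\kappa=-\frac12\big[\Ga_1(\wh W)+\Ga_2(\Gu_0,\wh U)\big]-\frac{p+1}{p+2}\,g_\kappa(\Gu_0,\wh W).
\]
Hence \eqref{ECM01} alone gives convergence of $\frac12\Ga_\ed(\wo u_\ed,\wo u_\ed)+\frac{p+1}{p+2}\a\|\wo u_\ed\|^{p+2}_{L^2(\O)}$ to the expected value. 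Your termwise argument then applies verbatim, provided you replace ``$\simeq$'' by the one-sided unfolding bounds $\Ga_\ed(\wo u_\ed,\wo u_\ed)\ge E^1_\ed+E^2_\ed$ and $\|\wo u_\ed\|_{L^2(\O)}\ge N_\ed$; the elastic energy in the boundary layer $\O\setminus\wh\O_\e$ need not vanish. This yields $\limsup N_\ed\le\|\Gu_0+\kappa^{-1}\wh W\|_{L^2(\O\times Y)}$ for every $\kappa\in(0,+\infty]$, and convergence of $L_\ed$ follows a posteriori. Note that this makes the corollary a consequence of the \emph{statement} of Theorem~\ref{ENH01}. Step~1 of that theorem passes to the limit in the same weak--weak pairing, so that is where the real difficulty lies.
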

\begin{proof}
	
	From the convergence of total energy \eqref{ECM01} and the weak convergence \eqref{II02}, we have
	$$\lim_{(\e,\d)\to(0,0)}\left[{1\over 2}\Ga_\ed(\wo u_\ed,\wo u_\ed)+{1\over p+2}g(\wo{u}_\ed,\wo u_\ed)\right]={1\over 2}\left[\Ga_1(\wh W)+\Ga_2(\Gu_0,\wh U)\right]+{1\over p+2}g_\kappa(\Gu_0,\wh W),$$
	which combined with the weak lower-semicontinuity, non-negativity of $\Ga_\ed(\cdot,\cdot)$, $\|\cdot\|^{p+2}_{L^2(\O)}$ and the convergences \eqref{II02} give
	$$\liminf_{(\e,\d)\to(0,0)}\Ga_\ed(\wo u_\ed,\wo u_\ed)=\Ga_1(\wh W)+\Ga_2(\Gu_0,\wh U),\quad \liminf_{(\e,\d)\to(0,0)}g(\wo{u}_\ed,\wo u_\ed)=g_\kappa(\Gu_0,\wh W).$$
	Then, we get
	\begin{multline*}
		\limsup_{(\e,\d)\to(0,0)}{1\over p+2}g(\wo u_\ed,\wo u_\ed)=\limsup_{(\e,\d)\to(0,0)}\left[{1\over 2}\Ga_\ed(\wo u_\ed,\wo u_\ed)+{1\over p+2}g(\wo{u}_\ed,\wo u_\ed)-{1\over 2}\Ga_\ed(\wo u_\ed,\wo u_\ed)\right]\\
		=\limsup_{(\e,\d)\to(0,0)}\left[{1\over 2}\Ga_\ed(\wo u_\ed,\wo u_\ed)+{1\over p+2}g(\wo{u}_\ed,\wo u_\ed)\right]-{1\over 2}\liminf_{(\e,\d)\to(0,0)}\Ga_\ed(\wo u_\ed,\wo u_\ed)\\
		={1\over p+2}g_\kappa(\Gu_0,\wh W),
	\end{multline*}
	which imply
	$$\lim_{(\e,\d)\to(0,0)}\a\|\Te(\wo u_\ed)\|^{p+2}_{L^2(\O\X Y)}=\a\|\Gu_0+{1\over\kappa}\wh W\|^{p+2}_{L^2(\O\X Y)}.$$
Moreover, by \eqref{II02} we have
$T_\e(\bar u_{\e\delta}) \rightharpoonup \Gu_0+\frac1\kappa \wh W$ weakly in $L^2(\Omega\times Y)^N$.
Since $t\mapsto t^{p+2}$ is strictly increasing on $[0,\infty)$, the previous limit implies
$\|T_\e(\bar u_{\e\delta})\|_{L^2(\Omega\times Y)}\to \|\Gu_0+\frac1\kappa \wh W\|_{L^2(\Omega\times Y)}$.
Therefore, by uniform convexity of $L^2(\Omega\times Y)^N$, we conclude the strong convergence \eqref{SCM}.

	This completes the proof.
\end{proof}

The unique minimzer $(\Gu_0,\wh W,\wh U)\in\D$ satisfy the following two-scale system (in variational form)
\begin{multline}\label{L01}
	\int_{\O\X Y_2} A(y)\big(e(\Gu_0)+e_y(\wh U)\big):\big(e(w_0)+e_y(W_2)\big)\,dx dy\\
	+\int_{\O\X Y}A(y) e_y(\wh W):e_y(W_1)\,dxdy+\alpha\|\Gu_0+{1\over \kappa}\wh W\|^p_{L^2(\O\X Y)}\int_{\O\X Y}(\Gu_0+{1\over \kappa}\wh W)\cdot (w_0+{1\over \kappa}W_1)\,dydx\\
	=\int_{\O\X Y}\big(f+\1_{\O\X Y_1}\wh\Theta\big)\cdot \left(w_0+{1\over \kappa} W_1\right)\,dydx,
\end{multline}
for all $(w_0,W_1,W_2)\in\D$.

\subsection{Homogenization and cell problems}\label{SS44}
In this subsection, we derive the homogenized limit problems for $\kappa\in(0,+\infty]$.

In \eqref{L01}, we replace $w_0=0$ and $W_1=0$ with localizing, we obtain\footnote{Observe that $\wh U\in H^1_{per,0}(Y)^N$, since $\nabla_y\wh U$ enters the variational form, we can choose $\wh U\in H^1_{per,0}(Y_2)^N$.}
\begin{equation}\label{514}
	\begin{aligned}
		&\text{Find $\wh U\in H^1_{per,0}(Y_2)^N$ such that}\\
		&\hskip 10mm \int_{Y_2}A(y)(e(\Gu_0)+e_y(\wh U)):e_y(W_2)\,dy=0,\quad\text{for all $W_2\in H^1_{per,0}(Y_2)^N$}.
	\end{aligned}
\end{equation}
This shows that $\wh U$ can be expressed in terms of the elements of the tensor $e(\Gu_0)$ and some correctors. First, we define the following $N\X N$ symmetric matrices by
$$\GM^{ij}_{kl}={1\over 2}\left(\d_{ki}\d_{lj}+\d_{kj}\d_{li}\right),\quad i,j,k,l\in\{1,\dots, N\},$$
where $\d_{ij}$ is the Kronecker symbol. The cell problems are given by
\begin{equation*}
	\begin{aligned}
		&\text{Find $\chi^2_{ij}\in H^1_{per,0}(Y_2)^N$ such that}\\
		&\hskip 10mm \int_{Y_2} A(y)(\GM^{ij}+e_y(\chi^2_{ij})):e_y(W_2)\,dy=0,\quad\text{for all $W_2\in H^1_{per,0}(Y_2)^N$}.
	\end{aligned}
\end{equation*}
The above equation implies
\begin{equation}\label{Warping}
	\wh U(x,y)=\sum_{i,j=1}^N e_{ij}(\Gu_0)(x)\chi^2_{ij}(y),\quad\text{for a.e. $(x,y)\in \O\X Y_2$}.
\end{equation}
Observe that using the expression \eqref{Warping}, we have
\begin{equation}\label{Hom01}
	A^{hom}_{ijkl}=\int_{Y_2}\left(A_{ijkl}(y)\big(\GM^{kl}+e_y(\chi^2_{kl})\big)\right)\,dy.
\end{equation}
So, our limit problem \eqref{L01} becomes using \eqref{514} and \eqref{5171}
\begin{multline*}
	\int_{\O}A^{hom}e(\Gu_0):e(w_0)\,dx+\a\|\Gu_0+{1\over \kappa}\wh W\|^p_{L^2(\O\X Y)}\int_{\O\X Y}(\Gu_0+{1\over \kappa}\wh W)\cdot w_0\,dydx\\
	=\int_{\O}f\cdot w_0\,dx+|Y_1|\int_\O\Mc_{Y_1}(\wh\Theta)\cdot w_0\,dx.
\end{multline*}
Similarly, as in Theorem \ref{ENH01}, the homogenized limit stiffness tensor satisfies the following as a direct consequence of the hypothesis on $A_{ijkl}$ and the inequality \eqref{Ex01} (also see  Corollary 10.13 in \cite{Donato} and Theorem II.1.1 in \cite{Olenik}).
\begin{lemma}\label{HT01}
	We have $A^{hom}_{ijkl}\in\R$ and
	\begin{itemize}
		\item Symmetry: $A^{hom}_{ijkl}=A^{hom}_{jikl}=A^{hom}_{klij}$.
		\item Ellipticity and uniform boundedness: There exists  $0<C_1<C_2$ such that
		\begin{equation}\label{424}
			\begin{aligned}
				\text{for all $\zeta\in \R^{N\X N}_\sy$},&\quad A_{ijkl}^{hom}\zeta_{kl}\zeta_{ij}\geq C_1|\zeta|^2_F,\\
				\text{for all $\zeta\in \R^{N\X N}$},&\quad A_{ijkl}^{hom}\zeta_{kl}\leq C_2|\zeta|_F. 
			\end{aligned}
		\end{equation}
	\end{itemize}
\end{lemma}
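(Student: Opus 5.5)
The plan is to rewrite $A^{hom}$ in the standard energy form and deduce all three properties from it. First, the cell problems are uniquely solvable: by \eqref{Coe01}, Korn's inequality on $H^1_{per,0}(Y_2)^N$ (here we use that $Y_2$ is connected, Lipschitz, and the periodicity condition removes rigid motions) and Lax--Milgram, each $\chi^2_{kl}$ exists, is unique, and satisfies $\|e_y(\chi^2_{kl})\|_{L^2(Y_2)}\le C$. Hence $A^{hom}_{ijkl}$ in \eqref{Hom01} is a finite real number, with $|A^{hom}_{ijkl}|\le \|A\|_{L^\infty}|Y_2|^{1/2}(|\GM^{kl}|_F|Y_2|^{1/2}+\|e_y(\chi^2_{kl})\|_{L^2(Y_2)})$. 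Since $\GM^{kl}$ is symmetric in $k,l$, so is $\chi^2_{kl}$ by uniqueness, giving the minor symmetries $A^{hom}_{ijkl}=A^{hom}_{ijlk}$; the symmetry $A^{hom}_{ijkl}=A^{hom}_{jikl}$ follows directly from $A_{ijkl}=A_{jikl}$.

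For the major symmetry we test the cell problem for $\chi^2_{kl}$ with $W_2=\chi^2_{ij}$. This gives
$$A^{hom}_{ijkl}=\int_{Y_2}A(y)\big(\GM^{kl}+e_y(\chi^2_{kl})\big):\big(\GM^{ij}+e_y(\chi^2_{ij})\big)\,dy,$$
which is symmetric under $(ij)\leftrightarrow(kl)$ because $A_{ijkl}=A_{klij}$. Contracting with $\zeta\in\R^{N\X N}_\sy$ and setting $\chi_\zeta=\zeta_{kl}\chi^2_{kl}$ yields
$$A^{hom}_{ijkl}\zeta_{kl}\zeta_{ij}=\int_{Y_2}A(y)\big(\zeta+e_y(\chi_\zeta)\big):\big(\zeta+e_y(\chi_\zeta)\big)\,dy\geq K\int_{Y_2}|\zeta+e_y(\chi_\zeta)|_F^2\,dy.$$
The upper bound in \eqref{424} follows from the estimate on $\chi^2_{kl}$ together with finite dimensionality; for non-symmetric $\zeta$ only the symmetric part contributes, by the minor symmetries.

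The main step is the lower bound, where we must rule out $\zeta+e_y(\chi_\zeta)$ being small on $Y_2$. Expanding the square,
$$\int_{Y_2}|\zeta+e_y(\chi_\zeta)|^2_F\,dy=|Y_2||\zeta|^2_F+2\zeta:\int_{Y_2}e_y(\chi_\zeta)\,dy+\int_{Y_2}|e_y(\chi_\zeta)|_F^2\,dy,$$
so it suffices to control the middle term. Because $\overline{Y_1}\subset Y$, the set $Y_2$ contains a neighbourhood of $\partial Y$. We would extend $\chi_\zeta$ into $Y_1$ using the extension operator $\bm\Pc$ from the proof of Lemma~\ref{L2} at scale one, which preserves $Y$-periodicity. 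Since $\int_Y e_y(\Pc\chi_\zeta)\,dy=0$ by periodicity, we get
$$\Big|\int_{Y_2}e_y(\chi_\zeta)\,dy\Big|=\Big|\int_{Y_1}e_y(\Pc\chi_\zeta)\,dy\Big|\le C|Y_1|^{1/2}\|e_y(\chi_\zeta)\|_{L^2(Y_2)}.$$

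This bound alone does not close the quadratic estimate, so at this point we would switch to a contradiction and compactness argument. Suppose the infimum of the quadratic form over $|\zeta|_F=1$ is zero. Then there are $\zeta_n\to\zeta$ with $|\zeta|_F=1$ and $\zeta_n+e_y(\chi_{\zeta_n})\to0$ in $L^2(Y_2)$. The function $v_n=\zeta_n y+\chi_{\zeta_n}$ has $e_y(v_n)\to0$ in $L^2(Y_2)$, so by Korn's inequality on the connected set $Y_2$, $v_n$ converges, up to rigid motions, to a rigid displacement $v$. Hence $\chi=v-\zeta y$ is periodic on $Y_2$, and so is its periodic extension $\Pc\chi$ on $Y$ with $e(\Pc\chi)$ equal to $-\zeta$ plus a skew part on $Y_2$. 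Averaging $e(\Pc\chi)$ over $Y$ gives zero, which contradicts $|\zeta|_F=1$ once we exploit that $Y_2$ wraps around the cell: rigid motions are affine, and an affine function $Ry+b-\zeta y$ that is periodic across opposite faces of $Y$, which $Y_2$ connects, forces $R-\zeta=0$, so $\zeta$ is skew. Being also symmetric, $\zeta=0$, a contradiction. This periodicity-across-faces step is where I expect the main care to be needed.
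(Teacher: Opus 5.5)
Your proof is correct, and it is more explicit than the paper, which gives no argument of its own. The paper simply declares the lemma a direct consequence of the hypotheses on $A$ and the appendix inequality \eqref{Ex01}, with pointers to Cioranescu--Donato and Oleinik--Shamaev--Yosifian.

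You instead build the proof in three steps:
\begin{itemize}
\item You test one cell problem with the other corrector to get the energy representation $A^{hom}_{ijkl}=\int_{Y_2}A(y)(\GM^{kl}+e_y(\chi^2_{kl})):(\GM^{ij}+e_y(\chi^2_{ij}))\,dy$.
\item This gives the major symmetry and the bound $A^{hom}\zeta:\zeta\ge K\|\zeta+e_y(\chi_\zeta)\|^2_{L^2(Y_2)}$.
\item You show that $\zeta+e_y(\chi_\zeta)=0$ on $Y_2$ forces $\zeta=0$. This uses rigidity on the connected set $Y_2$, together with the fact that $\overline{Y_1}\subset Y$ puts all of $\partial Y$ inside $\partial Y_2$, so a periodic affine field has zero linear part.
\end{itemize}

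This cell-level positivity is precisely what the paper's route leaves unproved. The appendix justifies \eqref{Ex01} by $\int_{Y_2}\nabla_y\wh U\,dy=0$. On a perforated cell this fails in general, because a periodic field on $Y_2$ can have nonzero flux $\int_{\partial Y_1}\wh U\otimes n\,dS$ through the hole boundary. Your argument handles the perforation correctly.

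Two simplifications are available.
\begin{itemize}
\item The map $\zeta\mapsto\chi_\zeta$ is linear on a finite-dimensional space, so the quadratic form attains its minimum on the unit sphere. You can therefore argue with a single $\zeta$ satisfying $\zeta+e_y(\chi_\zeta)=0$ instead of a sequence, and drop the sentence about averaging $e(\Pc\chi)$ over $Y$.
\item Your extension idea does close if you extend $v=\zeta y+\chi_\zeta$ rather than $\chi_\zeta$. Then $\Pc v-\zeta y$ coincides with $\chi_\zeta$ near $\partial Y$, hence is $Y$-periodic. So $\zeta=\int_Y e_y(\Pc v)\,dy$ and $|\zeta|_F\le C\|\zeta+e_y(\chi_\zeta)\|_{L^2(Y_2)}$. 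This yields an explicit $C_1\ge K/C^2$ in place of the non-constructive constant from compactness.
\end{itemize}
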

Similarly, choosing $w_0=0$ and $W_2=0$ in \eqref{L01}, with localizing, we obtain
\begin{equation}\label{5171}
	\begin{aligned}
		&\text{Find $\wh W\in \GH^1_{per}(Y)^N$ such that for all $W_1\in \GH^1_{per}(Y)^N$}\\
		&\hskip 10mm \int_{Y} A(y)e_y(\wh W): e_y(W_1)\,dy+\alpha\|\Gu_0+{1\over \kappa}\wh W\|^p_{L^2(\O\X Y)}\int_{Y}(\Gu_0+{1\over \kappa}\wh W)\cdot {1\over \kappa}W_1\,dy\\
		&\hskip 100mm={1\over \kappa}\int_{Y}(f+\1_{\O\X Y_1}{\wh \Theta})\cdot W_1\,dy.
	\end{aligned}
\end{equation}
When $\kappa=+\infty$. Choosing $w_0=0$ and $W_2=0$ in \eqref{L01}, with localizing, we obtain 
\begin{equation}\label{5172}
	\begin{aligned}
		&\text{Find $\wh W\in \GH^1_{per}(Y)^N$ such that}\\
		&\hskip 10mm \int_{Y} A(y)e_y(\wh W): e_y(W_1)\,dy=0,\quad\text{for $W_1\in \GH^1_{per}(Y)^N$},
	\end{aligned}
\end{equation}
which has a unique solution using Lax-Milgram lemma and is given by $\wh W=0$. Similarly in \eqref{L01} we replace $w_0=0$ and $W_1=0$ with localizing, we obtain \eqref{514}.
\begin{remark}\label{ReM02}
	Note that the cell problems defined on $Y_2$ for any $\kappa\in(0,+\infty]$ are all the same, and this common problem is given by~\eqref{514}. Moreover, the two-scale limit problem~\eqref{L01} for $\kappa=+\infty$ can be obtained by first passing to the limit $(\varepsilon,\delta)\to(0,0)$ with $\kappa\in(0,+\infty)$ to derive~\eqref{L01}, and then passing to the limit $\kappa\to+\infty$.
\end{remark}
%
Finally, using the two-scale energy \eqref{EXE01} and the cell problems \eqref{514}, \eqref{5171}, \eqref{5172} with the expression \eqref{Hom01}, Theorem \ref{ENH01} and Lemma \ref{HT01}, we have the following
\begin{theorem}[Homogenized system]
	\label{56}
	For $\kappa\in(0,+\infty]$ and let $A^{hom}$ be given by \eqref{Hom01}. 
	Then the limit displacement $(\Gu_0,\wh W)\in \D_0$ is the unique minimizer of the homogenized energy functional
	$\GT^{hom}_\kappa:\D_0\to\R$, and
	\begin{equation}\label{425}
		\Gm_\kappa=\GT^{hom}_\kappa(\Gu_0,\wh W),
	\end{equation}
	where $\GT^{hom}_\kappa$ is given by
	\begin{multline}\label{Thm:HomEnergy}
		\GT^{hom}_\kappa(\Gu_0,\wh W)
		=
		\frac12\int_\O A^{hom}e(\Gu_0):e(\Gu_0)\,dx+{1\over 2}\int_{\O\X Y}A(y) e_y(\wh W):e_y(\wh W)\,dxdy\\
		+\frac{\a}{p+2}\Big\|\Gu_0+\frac1\kappa\wh W\Big\|_{L^2(\O\times Y)}^{p+2}
		-\int_{\O\times Y}\big(f+\1_{\O\X Y_1}\wh\Theta\big)\cdot\Big(\Gu_0+\frac1\kappa\wh W\Big)\,dx\,dy.
	\end{multline}
	Moreover, $(\Gu_0,\wh W)\in\D_0$ is the unique solution of 
		\begin{multline*}
		\int_{\O}A^{hom}e(\Gu_0):e(w_0)\,dx+\int_{\O\X Y}A(y) e_y(\wh W):e_y(W_1)\,dxdy\\
		+\a\|\Gu_0+{1\over \kappa}\wh W\|^p_{L^2(\O\X Y)}\int_{\O\X Y}(\Gu_0+{1\over \kappa}\wh W)\cdot (w_0+{1\over \kappa}W_1)\,dydx\\
		=\int_{\O\X Y}(f+\1_{\O\X Y_1}\wh \theta)\cdot (w_0+{1\over \kappa}W_1)\,dydx,\quad\forall\,(w_0,W_1)\in\D_0.
	\end{multline*}
\end{theorem}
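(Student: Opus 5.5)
The plan is to eliminate the $Y_2$-corrector from the two-scale problem of Theorem~\ref{Th3} by minimizing over it first. By Theorem~\ref{ENH01}, $\Gm_\kappa=\GT_\kappa(\Gu_0,\wh W,\wh U)$, and $(\Gu_0,\wh W,\wh U)$ is the unique minimizer of $\GT_\kappa$ on $\D$. For fixed $(w_0,W_1)\in\D_0$, the variable $W_2$ enters $\GT_\kappa$ only through the quadratic term $\frac12\Ga_2(w_0,W_2)$. We therefore first compute
\[
\min_{W_2\in L^2(\O;H^1_{per,0}(Y))^N}\tfrac12\Ga_2(w_0,W_2).
\]
This minimization decouples pointwise in $x$. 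It is the cell problem \eqref{514} with $e(\Gu_0)$ replaced by $e(w_0)(x)$. The cell problem is uniquely solvable by Lax--Milgram, using the coercivity \eqref{Coe01} and Korn's inequality on $Y_2$ for periodic fields with zero mean. By linearity the minimizer is $W_2^\ast=\sum_{ij}e_{ij}(w_0)\chi^2_{ij}$, exactly as in \eqref{Warping}. Measurability in $x$ and the $L^2(\O;H^1)$ bound hold because $e(w_0)\in L^2(\O)$ and the $\chi^2_{ij}$ are fixed functions.

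Next we evaluate the minimal value. Testing the cell problem for $\chi^2_{kl}$ with $\chi^2_{ij}$ gives the standard energy identity
\[
\int_{Y_2}A(\GM^{ij}+e_y\chi^2_{ij}):(\GM^{kl}+e_y\chi^2_{kl})\,dy=\int_{Y_2}A(\GM^{ij}+e_y\chi^2_{ij}):\GM^{kl}\,dy=A^{hom}_{klij},
\]
where the last equality follows from \eqref{Hom01} together with the symmetries of $A$. Hence $\min_{W_2}\Ga_2(w_0,W_2)=\int_\O A^{hom}e(w_0):e(w_0)\,dx$. Consequently $\GT^{hom}_\kappa(w_0,W_1)=\min_{W_2}\GT_\kappa(w_0,W_1,W_2)$ for every $(w_0,W_1)\in\D_0$.

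With this identity the minimization statements follow directly. We have $\Gm_\kappa=\min_{\D}\GT_\kappa=\min_{\D_0}\GT^{hom}_\kappa$. Since $\wh U$ solves \eqref{514} and that solution is unique, $\wh U=W_2^\ast(\Gu_0)$, and therefore $\GT^{hom}_\kappa(\Gu_0,\wh W)=\GT_\kappa(\Gu_0,\wh W,\wh U)=\Gm_\kappa$, which is \eqref{425}. For uniqueness, suppose $(v_0,V_1)$ is another minimizer of $\GT^{hom}_\kappa$. Then $(v_0,V_1,W_2^\ast(v_0))$ minimizes $\GT_\kappa$, so by uniqueness in Theorem~\ref{Th3} it coincides with $(\Gu_0,\wh W,\wh U)$.

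Alternatively, uniqueness can be obtained directly. Lemma~\ref{HT01} gives the ellipticity \eqref{424}, Korn's inequality on $\GU$ and on $\GH^1_{per}(Y)$ makes the quadratic part coercive on $\D_0$, and the term $\|\cdot\|^{p+2}$ is convex. Together these make $\GT^{hom}_\kappa$ strictly convex and coercive on $\D_0$.

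Finally, for the variational equation we differentiate $\GT^{hom}_\kappa$ at its minimizer in the direction $(w_0,W_1)$. The derivative of $\frac{\alpha}{p+2}\|v\|^{p+2}$ is $\alpha\|v\|^p(v,\cdot)$, applied to $v=\Gu_0+\frac1\kappa\wh W$. Setting the Gâteaux derivative to zero gives the stated system. Conversely, monotonicity (as in Step 3 of Theorem~\ref{Th1}, combined with the coercivity above) shows the variational equation has at most one solution. The case $\kappa=+\infty$ needs no separate treatment, because $\frac1\kappa=0$ and all the steps go through unchanged, recovering $\wh W=0$ from \eqref{5172}.

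The main obstacle is the energy identity for $A^{hom}$, which requires the symmetry of the correctors. The other delicate point is that the notation $\wh\theta$ in the statement is to be read as $\wh\Theta$.
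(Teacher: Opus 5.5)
Your proposal is correct and follows essentially the paper's route. You eliminate the $Y_2$-corrector via the cell problem \eqref{514} and the representation \eqref{Warping}, identify the resulting quadratic form with $A^{hom}$ from \eqref{Hom01}, and transfer existence, uniqueness and the minimal value from Theorems~\ref{Th3}--\ref{ENH01} (with Lemma~\ref{HT01} for coercivity); the partial minimization in $W_2$ and the corrector energy identity spell out what the paper only sketches by substituting \eqref{Warping} into \eqref{L01} and \eqref{EXE01}.
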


\section{Asymptotic behavior of the $\e,\d$-OCP}\label{S06}
%
	The main aim is to analyze the asymptotic behavior of the $\varepsilon$-$\d$ dependent optimal control problem as $(\e,\d)\to(0,0)$ satisfying \eqref{R1}, for which first we need a proper characterization of the optimal control.
	
	From, Theorem \ref{Th2}, we have there exist an optimal control $\Theta_\e$ such that
	$$\Gj_\ed(\Theta_\e)=\inf_{\theta_\e\in\GV}\Gj_\ed(\theta_\e),\quad \|\Theta_\e\|_{L^2(\O)}\leq C.$$
	The constant is independent of $\e$ and $\d$.
	
	The adjoint system w.r.t the unique solution $\wo u_\ed$ corresponding to $\Theta_\e$ is given by
	\begin{equation}\label{AdjW01}
		\begin{aligned}
			&\text{Find $\wo v_\ed\in \GU$ such that for all $w\in\GU$}\\
			&\hskip 10mm \Ga_\ed(w,\wo v_\ed)+\a\|\wo u_\ed\|^p_{L^2(\O)}(w,\wo v_\ed)+\a p\|\wo u_\ed\|^{p-2}_{L^2(\O)}(\wo u_\ed,w)(\wo u_\ed,\wo v_\ed)=(\wo u_\ed-u_d,w),	
		\end{aligned}
	\end{equation}
	where $(\cdot,\cdot)$ denotes the $L^2(\O)$ inner product. In the energy form the adjoint system is given by: find $\wo v_\ed\in\GU$ such that
	\[
	\Gm^\ast_\ed=\Jc_\ed(\wo v_\ed)=\min_{v\in\GU}\mathcal J_\ed(v),
	\]
	where the quadratic functional $\mathcal J_\ed:\GU\to\R$ by
	\begin{equation}\label{AdjEnergy}
		\mathcal J_\ed(v)
		=
		\frac12\,\Ga_\ed(v,v)
		+\frac{\a}{2}\,\|\wo u_\ed\|_{L^2(\O)}^{p}\,\|v\|_{L^2(\O)}^{2}
		+\frac{\a p}{2}\,\|\wo u_\ed\|_{L^2(\O)}^{p-2}\,(\wo u_\ed,v)^2
		-\;(\wo u_\ed-u_d,\,v),
		\quad v\in\GU.
	\end{equation}
	The Euler--Lagrange condition $\frac{d}{dt}\mathcal J_\ed(\wo v_\ed+t w)\big|_{t=0}=0$
	for all $w\in\GU$ is exactly \eqref{AdjW01}.
	\begin{theorem}
		There exist a unique minimizer of the adjoint energy \eqref{AdjEnergy}. The unique minimizer $\wo v_\ed\in\GU$ satisfy the following:
		\begin{equation}\label{EsA01}
			\d\|e(\wo v_\ed)\|_{L^2(\O^1_\e)}+\|e(\wo v_\ed)\|_{L^2(\O^2_\e)}\leq C\|\wo u_\ed-u_d\|_{L^2(\O)}.
		\end{equation}
		The constant is independent of $\e$ and $\d$.
	\end{theorem}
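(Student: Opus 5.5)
The plan follows Theorem \ref{Th1} and Lemma \ref{L3}: first existence and uniqueness of the minimizer, then the estimate \eqref{EsA01} by testing the adjoint equation with the solution itself.

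\textbf{Existence and uniqueness.} The functional $\mathcal J_\ed$ in \eqref{AdjEnergy} is a continuous quadratic form on $\GU$ minus a bounded linear functional. Its quadratic part is
\[
\mathcal Q_\ed(v)=\Ga_\ed(v,v)+\a\|\wo u_\ed\|^p_{L^2(\O)}\|v\|^2_{L^2(\O)}+\a p\|\wo u_\ed\|^{p-2}_{L^2(\O)}(\wo u_\ed,v)^2 .
\]
Here $p\ge2$, so $\|\wo u_\ed\|^{p-2}$ is well defined even when $\wo u_\ed=0$, using the convention that the term vanishes for $p=2$ only through the factor $\|\wo u_\ed\|^{0}=1$. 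The two nonlocal terms are nonnegative, and $\Ga_\ed(v,v)\ge C(\d)\|v\|^2_{H^1(\O)}$ by \eqref{l2}. Hence the symmetric bilinear form associated with $\mathcal Q_\ed$ is continuous and coercive on $\GU$. The right-hand side $v\mapsto(\wo u_\ed-u_d,v)$ is continuous on $\GU$. Lax--Milgram, or strict convexity together with coercivity and weak lower semicontinuity as in Steps 1--3 of Theorem \ref{Th1}, then gives a unique minimizer $\wo v_\ed\in\GU$. This minimizer is the unique solution of \eqref{AdjW01}.

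\textbf{Estimate.} Take $w=\wo v_\ed$ in \eqref{AdjW01} and drop the two nonnegative nonlocal terms. With \eqref{Coe01} this gives
\[
K\left(\d^2\|e(\wo v_\ed)\|^2_{L^2(\O^1_\e)}+\|e(\wo v_\ed)\|^2_{L^2(\O^2_\e)}\right)\le \|\wo u_\ed-u_d\|_{L^2(\O)}\|\wo v_\ed\|_{L^2(\O)}.
\]
The Korn-type inequality \eqref{NK01} bounds $\|\wo v_\ed\|_{L^2(\O)}$ by $C\big(\e\|e(\wo v_\ed)\|_{L^2(\O^1_\e)}+\|e(\wo v_\ed)\|_{L^2(\O^2_\e)}\big)$. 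By \eqref{R11}, $\e\le C\d$, so this is at most
\[
C\left(\d\|e(\wo v_\ed)\|_{L^2(\O^1_\e)}+\|e(\wo v_\ed)\|_{L^2(\O^2_\e)}\right)=:C X_\ed .
\]
Since $X_\ed^2\le 2\big(\d^2\|e(\wo v_\ed)\|^2_{L^2(\O^1_\e)}+\|e(\wo v_\ed)\|^2_{L^2(\O^2_\e)}\big)$, we get $X_\ed^2\le C\|\wo u_\ed-u_d\|_{L^2(\O)}X_\ed$. Dividing by $X_\ed$ (the case $X_\ed=0$ is trivial) yields \eqref{EsA01} with a constant independent of $\e$ and $\d$.

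\textbf{Main obstacle.} The only real danger is getting a constant uniform in $(\e,\d)$. Korn's inequality in the soft phase degenerates as $\d\to0$, so one cannot use the plain $H^1$ coercivity \eqref{l2}. The remedy is to use \eqref{NK01} together with the regime assumption $\e\le C\d$ from \eqref{R11}, exactly as in Lemma \ref{L3}. The signs of the nonlocal terms, in particular the rank-one term $(\wo u_\ed,v)^2$, cause no difficulty because they only help.
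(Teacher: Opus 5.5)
Your proposal is correct and follows the paper's proof essentially step for step. Existence and uniqueness come from Lax--Milgram (equivalently, strict convexity) for the coercive symmetric form $B_\ed$. The estimate comes from testing \eqref{AdjW01} with $\wo v_\ed$, dropping the nonnegative nonlocal terms, and closing via \eqref{NK01} and $\e\le C\d$ as in Lemma~\ref{L3}; you spell out that last step in more detail than the paper does. Your aside about $p=2$ is garbled: when $\wo u_\ed=0$, the rank-one term vanishes because of the factor $(\wo u_\ed,v)^2$, not because $\|\wo u_\ed\|^0=1$. It plays no role in the argument.
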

	\begin{proof}
		Fix $\wo u_\ed\in \GU$ and since $p\ge 2$. 
		Define the bilinear form $B_\ed:\GU\times\GU\to\R$ and the linear functional $F_\ed:\GU\to\R$ by
		\begin{align*}
			B_\ed(w,v)
			&=\Ga_\ed(w,v)
			+\a\|\wo u_\ed\|_{L^2(\O)}^{p}\,(w,v)
			+\a p\,\|\wo u_\ed\|_{L^2(\O)}^{p-2}\,(\wo u_\ed,w)\,(\wo u_\ed,v),\quad
			F_\ed(w)
			=(\wo u_\ed-u_d,w).
		\end{align*}
		Then the adjoint problem \eqref{AdjW01} can be written as
		\begin{equation}\label{AdjW01_LM}
			\text{Find $\wo v_\ed\in\GU$ such that } \quad B_\ed(w,\wo v_\ed)=F_\ed(w)\quad\forall w\in\GU.
		\end{equation}
		By the continuity of $\Ga_\ed$ on $\GU\times\GU$ and the Cauchy--Schwarz inequality in $L^2(\O)$,
		there exists $C>0$ (independent of $w,v$) such that
		\begin{multline*}
			|B_\ed(w,v)|
			\le C\|w\|_{H^1(\O)}\|v\|_{H^1(\O)}\\
			+\a\|\wo u_\ed\|_{L^2(\O)}^{p}\,\|w\|_{L^2(\O)}\|v\|_{L^2(\O)}
			+\a p\,\|\wo u_\ed\|_{L^2(\O)}^{p-2}\,\|\wo u_\ed\|_{L^2(\O)}^2\,\|w\|_{L^2(\O)}\|v\|_{L^2(\O)}.
		\end{multline*}
		Using the continuous embedding $H^1(\O)^N\hookrightarrow L^2(\O)^N$, there exists $C_0>0$
		such that $\|z\|_{L^2(\O)}\le C_0\|z\|_{H^1(\O)}$ for all $z\in\GU$. Hence
		\[
		|B_\ed(w,v)|\le C_1\|w\|_{H^1(\O)}\|v\|_{H^1(\O)}\qquad\forall w,v\in\GU,
		\]
		for some constant $C_1>0$. Similarly,
		\[
		|F_\ed(w)|=|(\wo u_\ed-u_d,w)|\le \|\wo u_\ed-u_d\|_{L^2(\O)}\,\|w\|_{L^2(\O)}
		\le C_0\|\wo u_\ed-u_d\|_{L^2(\O)}\,\|w\|_{H^1(\O)},
		\]
		so $F_\ed$ is continuous on $\GU$.

		Since $p\ge 2$, $\|\wo u_\ed\|_{L^2(\O)}^{p}\ge 0$ and $\|\wo u_\ed\|_{L^2(\O)}^{p-2}\ge 0$. Thus,
		for any $v\in\GU$,
		\[
		B_\ed(v,v)
		=\Ga_\ed(v,v)+\a\|\wo u_\ed\|_{L^2(\O)}^{p}\,\|v\|_{L^2(\O)}^{2}
		+\a p\,\|\wo u_\ed\|_{L^2(\O)}^{p-2}\,(\wo u_\ed,v)^2
		\ge \Ga_\ed(v,v).
		\]
		By the coercivity assumption on $\Ga_\ed$, there exists $c>0$ such that
		\[
		\Ga_\ed(v,v)\ge c\,\|v\|_{H^1(\O)}^2\qquad\forall v\in\GU\implies B_\ed(v,v)\ge c\,\|v\|_{H^1(\O)}^2\qquad\forall v\in\GU.
		\]
		Therefore, $B_\ed$ is coercive on $\GU$.

		By the Lax--Milgram theorem, there exists a unique $\wo v_\ed\in\GU$ such that \eqref{AdjW01_LM} holds, i.e.
		$\wo v_\ed$ is the unique solution of the adjoint variational problem \eqref{AdjW01}.
%
		Since $B_\ed$ is coercive, $\mathcal J_\ed$ is strictly convex and therefore has at most one minimizer.
		Consequently, the unique solution $\wo v_\ed$ given by Lax--Milgram is the unique minimizer of $\mathcal J_\ed$.
		
		Finally, taking $w=\wo v_\ed$ in \eqref{AdjW01}, we get
		$$C\d^2\|e(\wo v_\ed)\|^2_{L^2(\O^1_\e)}+\|e(\wo v_\ed)\|^2_{L^2(\O^2_\e)}\leq C\|\wo u_\ed-u_d\|_{L^2(\O)}\|\wo v_\ed\|_{L^2(\O)}.$$
		Then, with the inequality \eqref{NK01} and proceeding as in Lemma \ref{L3}, we get \eqref{EsA01}.
		This completes the proof.
	\end{proof}

	Below, we give the necessary condition and converse statement of the first order optimality.		
		\begin{theorem}[First-order necessary conditions]
			\label{CC01}
			Let $\wo u_\ed\in\GU$ be the (unique) state associated with an optimal control $\Theta_\e\in\GV$ for \eqref{MainW01}.
			Let $\wo v_\ed\in\GU$ solve the adjoint problem \eqref{AdjW01}, for the fixed $\wo u_\ed$. 
			Then the optimal control satisfies
			\begin{equation}\label{OCF01}
				\Theta_\e=-\frac{1}{\gamma}\,\wo v_\ed \qquad \text{in }\Omega^1_\e.
			\end{equation}
			Conversely, if $(\wo u_\ed,\wo v_\ed,\Theta_\e)\in\GU\times\GU\times\GV$ satisfies, for all $w\in\GU$,
			\begin{equation}\label{OS01}
				\begin{aligned}
					&\Ga_\ed(\wo u_\ed,w)+\alpha\|\wo u_\ed\|_{L^2(\Omega)}^{p}\,(\wo u_\ed,w)=\ell_\e(w),\\
					&\Ga_\ed(w,\wo v_\ed)
					+\alpha\|\wo u_\ed\|_{L^2(\Omega)}^{p}\,(w,\wo v_\ed)
					+\alpha p\,\|\wo u_\ed\|_{L^2(\Omega)}^{p-2}\,(\wo u_\ed,w)\,(\wo u_\ed,\wo v_\ed)
					= \,(\wo u_\ed-u_d,w),\\
					&\Theta_\e=-\frac{1}{\gamma}\,\wo v_\ed,
				\end{aligned}
			\end{equation}
			then $\Theta_\e$ is a stationary point of the OCP.
		\end{theorem}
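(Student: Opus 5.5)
The plan is to show that the reduced cost $\Gj_\ed$ is Gâteaux differentiable on $\GV$, to compute its derivative through the adjoint state, and then to read off \eqref{OCF01} from the vanishing of that derivative at the optimal control $\Theta_\e$, which is an interior minimizer since $\GV$ is a whole Hilbert space and there are no constraints.

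\textbf{Step 1 (sensitivity).} Fix $h\in\GV$ and set $u_t=\Sc_\ed(\Theta_\e+th)$. By Theorem \ref{Th03}, $\|u_t-\wo u_\ed\|_{H^1(\O)}\le L_\ed|t|\,\|h\|_{L^2(\O^1_\e)}$, so the difference quotients $z_t=(u_t-\wo u_\ed)/t$ are bounded in $\GU$. Subtracting the state equations gives
\[
\Ga_\ed(z_t,w)+\tfrac{\a}{t}\big(G_0(u_t)-G_0(\wo u_\ed),w\big)=(h,w)_{L^2(\O^1_\e)},
\]
with $G_0(u)=\|u\|^p_{L^2(\O)}u$. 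The map $G_0:L^2\to L^2$ is $C^1$ with $DG_0(u)[z]=\|u\|^pz+p\|u\|^{p-2}(u,z)u$; this uses $p\ge2$, which makes $\|u\|^{p-2}$ continuous. Along a subsequence $z_t\rightharpoonup z$ in $H^1$ and $z_t\to z$ strongly in $L^2$ by Rellich, where $\e,\d$ are fixed. Passing to the limit shows that $z$ solves the linearized problem
\[
\Ga_\ed(z,w)+\a\|\wo u_\ed\|^p(z,w)+\a p\|\wo u_\ed\|^{p-2}(\wo u_\ed,z)(\wo u_\ed,w)=(h,w)_{L^2(\O^1_\e)}\quad\forall w\in\GU.
\]
The left side is the bilinear form $B_\ed(z,w)$, which is symmetric and coercive, so by Lax--Milgram $z$ is unique. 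Hence the whole family converges, and $z=S'(\Theta_\e)h$.

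\textbf{Step 2 (derivative of the cost and the adjoint).} From $L^2$-differentiability of the state,
\[
\Gj_\ed'(\Theta_\e)h=(\wo u_\ed-u_d,z)+\gamma(\Theta_\e,h)_{L^2(\O^1_\e)}.
\]
Test the adjoint equation \eqref{AdjW01} with $w=z$. By symmetry of $B_\ed$ this gives $(\wo u_\ed-u_d,z)=B_\ed(z,\wo v_\ed)$. Test the linearized equation with $w=\wo v_\ed$; this gives $B_\ed(z,\wo v_\ed)=(h,\wo v_\ed)_{L^2(\O^1_\e)}$. Therefore
\[
\Gj_\ed'(\Theta_\e)h=(\wo v_\ed+\gamma\Theta_\e,h)_{L^2(\O^1_\e)}.
\]
Optimality gives $\Gj_\ed(\Theta_\e+th)\ge\Gj_\ed(\Theta_\e)$ for all real $t$, so this derivative vanishes for every $h\in\GV$. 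Taking $h=\wo v_\ed+\gamma\Theta_\e$ restricted to $\O^1_\e$ yields \eqref{OCF01}.

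\textbf{Step 3 (converse).} Given a triple satisfying \eqref{OS01}, the first equation together with Theorem \ref{Th1} identifies $\wo u_\ed=\Sc_\ed(\Theta_\e)$. The second equation is the adjoint equation, so Steps 1--2 apply verbatim and give $\Gj_\ed'(\Theta_\e)h=(\wo v_\ed+\gamma\Theta_\e,h)_{L^2(\O^1_\e)}=0$ for all $h$. Thus $\Theta_\e$ is a stationary point. Since the reduced problem is nonconvex, no claim of optimality is made.

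The main obstacle is Step 1, namely passing to the limit in the nonlocal difference quotient of $G_0$. I would write
\[
\frac{G_0(u_t)-G_0(\wo u_\ed)}{t}=\int_0^1 DG_0\big(\wo u_\ed+s(u_t-\wo u_\ed)\big)[z_t]\,ds.
\]
Since $u_t\to\wo u_\ed$ strongly in $L^2$, the operator $DG_0$ is continuous in operator norm on $L^2$, and $z_t\to z$ in $L^2$, this integral converges to $DG_0(\wo u_\ed)[z]$ strongly in $L^2$. The rank-one term in $DG_0$ is exactly what makes $B_\ed$, and hence the adjoint operator, symmetric, and the argument relies on that symmetry.
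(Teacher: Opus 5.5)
Your proposal is correct and follows essentially the same route as the paper: linearize the state equation, obtain the duality identity $(\wo u_\ed-u_d,z)=(h,\wo v_\ed)$ by testing the adjoint equation with $z$ and the linearized equation with $\wo v_\ed$, apply the chain rule to the reduced cost, and reuse the same computation for the converse. The one real difference is that you actually justify the differentiability of $\Sc_\ed$, via difference quotients bounded by Theorem~\ref{Th03}, compactness, the mean-value formula for $G_0$, and uniqueness of the linearized solution; the paper only asserts that the Fr\'echet derivative exists. As a minor point, the symmetry of $B_\ed$ is not actually needed in either argument, because the adjoint equation already places the test function in the first slot of $B_\ed$.
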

		\begin{proof}			
			\textbf{Step 1:} Linearization of the state map.
			
			Fix $\theta_\e\in\GV$ and denote $u_\ed=u_\ed(\theta_\e)\in \GU$ the unique state solving \eqref{MainW01}. For any $h\in\GV$, the Fréchet derivative $\dot{u}_\ed=\Sc_\ed'(\theta_\e)h\in \GU$ exists and solves the
			linearized state problem
			\begin{equation}\label{lin-state}
				\Ga_\ed(\dot{u}_\ed,w)+\alpha\|u_\ed\|_{L^2(\O)}^{p}\,(\dot{u}_\ed,w)
				+\alpha p\,\|u_\ed\|_{L^2(\O)}^{p-2}\,(u_\ed,\dot{u}_\ed)(u_\ed,w)
				=(h,w)\quad\forall w\in \GU.
			\end{equation}
			Coercivity of $\Ga_\ed$ (see \eqref{l2}) on $\GU$ using Korn's inequality and uniform ellipticity together with  boundedness of the lower-order terms yield existence and uniqueness by Lax--Milgram.
			
			\textbf{Step 2:} Adjoint and duality identity.
			
			For the fixed state $u_\ed$, let $v_\ed\in \GU$ solve the adjoint problem \eqref{AdjW01} for the fixed state $u_\ed$ and control $\theta_\e\in\GV$.
			Choose $w=\dot{u}_\ed$ in \eqref{AdjW01} and  $w=v_\ed$ in \eqref{lin-state}; since the left-hand sides coincide (due to the fact that $\Ga_\ed$ is symmetric), we obtain
			\[
			(u_\ed-u_d,\dot{u}_\ed)=(h,v_\ed).
			\]
			
			\textbf{Step 3:} Derivative of the reduced cost and optimality condition.
			
			By the chain rule,
			\[
			\Gj_\ed'(\theta_\e)[h]=(u_\ed-u_d,\dot{u}_\ed)+\gamma(\theta_\e,h)
			=(h,v_\ed)+\gamma(\theta_\e,h)
			=(\gamma\theta_\e+v_\ed,h)\quad\forall h\in\GV.
			\]
			Since $\Theta_\e$ is an unconstrained optimal control, the stationary condition $\Gj_\ed'(\Theta_\e)[h]=0$ for all $h\in\GV$ implies
			\[
			(\gamma\Theta_\e+\wo v_\ed,h)=0\quad\forall h\in\GV
			\quad\Rightarrow\quad
			\gamma\Theta_\e+\wo v_\ed=0\ \text{ in $\Omega^1_\e$}
			\]
			i.e.~\eqref{OCF01} holds.
			
			{\bf Step 4:} Stationary condition.
			
			Conversely, if $(\wo u_\ed,\wo v_\ed,\Theta_\e)$ satisfies the state equation, the adjoint equation, and
			$\Theta_\e=-\wo v_\ed/\gamma$, then for every $h\in\GV$ the computation above gives
			\[
			\Gj_\ed'(\Theta_\e)[h]=(\gamma\Theta_\e+\wo v_\ed,h)=0,
			\]
			so $\Theta_\e$ is a stationary point of the OCP. This completes the proof.
		\end{proof}
		\begin{remark}[On sufficiency]
			In general with the nonlinear term $\alpha\|u\|_{L^2(\O)}^p u$, the system \eqref{OS01} gives
			first-order necessary conditions. Since, for $\alpha>0$ the control-to-state map $\Sc_\ed:\GV\to\GU$ is nonlinear (indeed, non-affine). 
			A quick scaling argument shows this: for simplicity we take $f\equiv 0$. If $u=\Sc_\ed(\theta)$ and $t>0$, then 
			$u_t:=\Sc_\ed(t\theta)$ solves
			\[
			\Ga_\ed(u_t,w)+\alpha\|u_t\|_{L^2(\Omega)}^{p}(u_t,w)=(t\theta,w)\quad\forall w\in\GU.
			\]
			If $\Sc_\ed$ were linear, we would have $u_t=tu$, hence
			\[
			\Ga_\ed(tu,w)+\alpha\|tu\|_{L^2(\O)}^{p}(tu,w)
			=t\,\Ga_\ed(u,w)+t^{p+1}\alpha\|u\|_{L^2(\O)}^{p}(u,w),
			\]
			which equals $t\big(\Ga_\ed(u,w)+\alpha\|u\|_{L^2(\O)}^{p}(u,w)\big)$ only when $\alpha=0$ or $u\equiv 0$. 
			Thus $\Sc_\ed$ is not linear for $\alpha>0$.
			As a consequence, the reduced functional 
			\[
			\Gj_\ed(\theta)=\tfrac12\|\Sc_\ed(\theta)-u_d\|_{L^2(\Omega)}^2+\tfrac{\gamma}{2}\|\theta\|_{L^2(\Omega)}^2
			\]
			is, in general, non-convex for $\alpha>0$. Indeed, differentiating twice in a direction $h\in\GV$ yields
			\[
			\Gj_\ed''(\theta)[h,h]
			=\|\Sc_\ed'(\theta)h\|_{L^2(\Omega)}^2
			+\big(\Sc_\ed(\theta)-u_d,\;\Sc_\ed''(\theta)[h,h]\big)_{L^2(\Omega)}
			+\gamma\|h\|_{L^2(\Omega)}^2,
			\]
			and the middle term has no fixed sign in general. In the linear case $\alpha=0$ we have $\Sc_\ed''\equiv 0$, so
			\[
			\Gj_\ed''(\theta)[h,h]=\|\Sc_\ed'(\theta)h\|_{L^2(\O)}^2+\gamma\|h\|_{L^2(\O)}^2\ge \gamma\|h\|_{L^2(\O)}^2,
			\]
			hence $\Gj_\ed$ is (strictly) convex and the optimal control is unique. For $\alpha>0$, nonconvexity may lead to
			multiple stationary points and non-uniqueness of global minimizers.
		\end{remark}
	From the above theorem, it is sufficient to analyze the asymptotic behavior of the adjoint state $\wo v_\ed$ to get the strong convergence of the optimal control. So, below, we first present the asymptotic behavior of the adjoint state,  followed by the derivation of two-scale adjoint problem, finally we end with the limit optimal control problem. 
	
	Let $\wo v_\ed$ be the unique adjoint state of the problem \eqref{AdjW01}. Then, using Lemma \ref{L2}, there exist $\wo\Gv_\ed\in \GU$ satisfying  \eqref{46}$_{1,3}$ and $\wo\Gz_\ed\in \GU$ satsifying \eqref{46}$_{2,4,5}$ such that
	$$\wo v_\ed=\wo\Gv_\ed+\wo\Gz_\ed,\quad \text{in $\O$}.$$
	 Then, using the estimates \eqref{46}--\eqref{E01}, \eqref{UBOC}--\eqref{UBOCC} and \eqref{EsA01}, we obtain
	\begin{equation}\label{61}
		\|\wo \Gv_\ed\|_{H^1(\O)}+{\d\over \e}\|\wo\Gz_\ed\|_{L^2(\O^1_\e)}+\d\|\nabla\wo\Gz_\ed\|_{L^2(\O^1_\e)}\leq C\|\wo u_\ed-u_d\|_{L^2(\O)}\leq C.
	\end{equation}
	The constant is independent of $\e$ and $\d$.
	\begin{lemma}[Two-scale adjoint system]\label{L08}
		Let $(\bar{u}_\ed,\Theta_\e)\in\GU\X \GV$ be the optimal pair to the OCP \eqref{OCPM01} and $\wo v_\ed\in\GU$ be the adjoint state. Then, there exist $(\Gv_0,\wh Z,\wh V)\in\D$ such that
		\begin{equation}\label{Con011}
			\begin{aligned}
				\Te(\wo\Gv_\ed)&\to \Gv_0, &&\text{strongly in $L^2(\O\X Y)^N$},\\
				\Te(\nabla \wo\Gv_\ed) &\rightharpoonup \nabla \Gv_0+\nabla_y\wh V, &&\text{weakly in $L^2(\O\X Y)^{N\X N}$},\\
				 \Te(\wo\Gz_\ed) &\rightharpoonup{1\over \kappa}\wh Z,&& \text{weakly in $L^2(\O\X Y)^N$},\\
				 \d\Te(\nabla \wo\Gz_\ed) &\rightharpoonup \nabla_y\wh Z,&&\text{weakly in $L^2(\O\X Y)^{N\X N}$}.
			\end{aligned}
		\end{equation}
		Moreover, we have
		\begin{equation}\label{ECA01}
			\lim_{(\e,\d)\to(0,0)}\Jc_\ed(\wo v_\ed)=\Jc_\kappa(\Gv_0,\wh Z,\wh V)=\min_{(w_0, W_1,W_2)\in\D}\Jc(w_0,W_1,W_2)=\Gm^\ast_\kappa,
		\end{equation}
		where the two-scale adjoint energy is given by
		\begin{multline*}
			\Jc(\Gv_0,\wh Z,\wh V)={1\over 2}\left[\Ga_1(\wh Z)+\Ga_2(\Gv_0,\wh V)\right]+{\a\over 2}\left[\|\Gu_\kappa\|^p_{L^2(\O\X Y)}\|\Gv_\kappa\|^2_{L^2(\O\X Y)}+p\|\Gu_\kappa\|^{p-2}_{L^2(\O\X Y)}(\Gu_\kappa,\Gv_\kappa)^2\right]\\
			-(\Gu_\kappa-u_d,\Gv_\kappa),
		\end{multline*}
		with
		$$\Gu_\kappa=\Gu_0+{1\over \kappa}\wh W,\quad \Gv_\kappa=\Gv_0+{1\over\kappa}\wh Z,\quad\text{in $\O\X Y$}.$$
		Furthermore, we have the strong convergence for the microscopic optimal control
		\begin{equation}\label{OCC03}
				\Te(\Theta_\e) \to -{1\over \gamma}\left(\Gv_0+{1\over \kappa}\wh Z\right)=\wh \Theta,\quad\text{strongly in $L^2(\O\X Y_1)^N$}.
		\end{equation}
	\end{lemma}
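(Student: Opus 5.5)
The plan is to mirror the state analysis (Lemma~\ref{L04}, Theorem~\ref{ENH01}, Corollary~\ref{Cor1}) for the adjoint problem. The only new feature is that the adjoint energy \eqref{AdjEnergy} has coefficients depending on $\wo u_\ed$. These coefficients are controlled by the strong convergence \eqref{SCM} of Corollary~\ref{Cor1}.

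\textbf{Step 1: compactness.} From the uniform estimate \eqref{61} and the unfolding results already used in Lemma~\ref{L04} (Corollary 1.37, Proposition 1.12 and Theorem 1.36 of \cite{CDG}), we extract a subsequence and limits $\Gv_0\in\GU$, $\wh V\in L^2(\O;H^1_{per,0}(Y))^N$, $\wh Z\in L^2(\O;\GH^1_{per}(Y))^N$ satisfying \eqref{Con011}. Here the third line follows from $\frac{\d}{\e}\Te(\wo\Gz_\ed)\rightharpoonup\wh Z$ together with $\e/\d\to1/\kappa$. Consequently $\Te(\wo v_\ed)\rightharpoonup\Gv_\kappa$ weakly in $L^2(\O\X Y)^N$. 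The unfolded strains converge exactly as in \eqref{II02}.

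\textbf{Step 2: coefficients.} By \eqref{SCM} and \eqref{uci},
\begin{equation*}
\|\wo u_\ed\|_{L^2(\O)}\to\|\Gu_\kappa\|_{L^2(\O\X Y)},\qquad \Te(\wo u_\ed-u_d)\to\Gu_\kappa-u_d\ \text{strongly}.
\end{equation*}
Hence every scalar coefficient in $\Jc_\ed$ converges. Moreover, the linear term and the rank-one term satisfy
\begin{equation*}
(\wo u_\ed-u_d,v_\ed)\to(\Gu_\kappa-u_d,\Gv_\kappa),\qquad (\wo u_\ed,v_\ed)\to(\Gu_\kappa,\Gv_\kappa)
\end{equation*}
for any sequence with $\Te(v_\ed)\rightharpoonup\Gv_\kappa$, since each is a strong--weak pairing.

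\textbf{Step 3: energy convergence by $\Gamma$-convergence, as in Theorem~\ref{ENH01}.} For the liminf inequality, apply weak lower semicontinuity to the convex terms $\Ga_\ed(v,v)$ and $\|v\|^2$. By Step 2, the rank-one term and the linear term pass to the limit exactly. For the limsup inequality, use the recovery sequence \eqref{Test01} built from smooth $(w_0,W_1,W_2)\in\D$. Its unfolding converges strongly, so every term of $\Jc_\ed$ converges to $\Jc$. Density then gives the limsup for all of $\D$. Existence and uniqueness of the minimizer of $\Jc$ on $\D$ follow as in Theorem~\ref{Th3}, because $\Jc$ is a strictly convex coercive quadratic form. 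Combining the two inequalities yields \eqref{ECA01}, shows that the limit triple is that minimizer, and gives convergence of the whole sequence.

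\textbf{Step 4: strong convergence of the control.} This is the main obstacle. We first upgrade the weak convergence of $\Te(\wo v_\ed)$ to strong convergence, following Corollary~\ref{Cor1}. Energy convergence together with the separate lower semicontinuity of each nonnegative quadratic term forces each term to converge. In particular $\|\wo v_\ed\|_{L^2(\O)}^2$ converges, because its coefficient $\frac{\a}{2}\|\wo u_\ed\|^p$ tends to $\frac{\a}{2}\|\Gu_\kappa\|^p$. One caveat: if $\Gu_\kappa=0$ this coefficient vanishes, and instead we would recover norm convergence by testing the adjoint equation with $\wo v_\ed$ and using Step 2. Norm convergence plus weak convergence gives $\Te(\wo v_\ed)\to\Gv_\kappa$ strongly in $L^2(\O\X Y)^N$.

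Finally, by \eqref{OCF01} we have $\Te(\Theta_\e)=-\frac1\gamma\Te(\wo v_\ed)$ on $\O\X Y_1$, with the boundary layer $\O\setminus\wh\O_\e$ negligible. Therefore $\Te(\Theta_\e)\to-\frac1\gamma\Gv_\kappa$ strongly in $L^2(\O\X Y_1)^N$. Uniqueness of weak limits identifies this limit with $\wh\Theta$ from \eqref{OCC01}, which proves \eqref{OCC03}.
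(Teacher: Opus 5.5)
Your proposal follows essentially the same route as the paper: compactness from \eqref{61}; $\Gamma$-convergence of the adjoint energy, with the coefficients, the rank-one term and the linear term passing to the limit exactly thanks to \eqref{SCM}; the argument of Corollary~\ref{Cor1} to upgrade to \eqref{SCM+}; and finally $\Theta_\e=-\frac1\gamma\wo v_\ed$ on $\O^1_\e$ to obtain \eqref{OCC03}. Your caveat about $\Gu_\kappa=0$ points to a case the paper silently ignores, but testing the adjoint equation with $\wo v_\ed$ only yields convergence of $\Ga_\ed(\wo v_\ed,\wo v_\ed)$, which you already have; what actually closes it, and works in every case, is that this energy convergence together with the coercivity of $A$ forces $\d\,\Te(e(\wo\Gz_\ed))\to e_y(\wh Z)$ strongly in $L^2(\O\X Y_1)^{N\X N}$, so that Korn and Poincar\'e in $H^1_0(Y_1)$ give $\frac{\d}{\e}\Te(\wo\Gz_\ed)\to\wh Z$ strongly and hence \eqref{SCM+}.
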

	\begin{proof}
		The convergences \eqref{Con011} are obtained using the estimate \eqref{61} as in Lemma \ref{L04}.
		
		Proceeding as in Theorem \ref{Th3}, we get the existence of unique minimizer of $\Jc_\kappa$ in $\D$, i.e.
		$$\min_{(w_0, W_1,W_2)\in\D}\Jc(w_0,W_1,W_2)=\Gm^\ast_\kappa\in(-\infty,0],$$
		then proceeding as in Theorem \ref{ENH01}, we obtain
		$$\lim_{(\e,\d)\to(0,0)}\Jc_\ed(\wo v_\ed)=\Jc_\kappa(\Gv_0,\wh Z,\wh V)=\Gm^\ast_\kappa,$$
		using the strong convergence \eqref{SCM}, weak convergences \eqref{Con011} and the recovery sequence $\{w_\ed\}_{\e,\d}\subset \GU$ from Step 2 of Theorem \ref{ENH01}. This gives \eqref{ECA01}.
		
		Finally, proceeding as in Corollary \ref{Cor1} and the total energy convergence \eqref{ECA01} combined with the convergences \eqref{SCM}, \eqref{Con011} give
		$$
		\lim_{(\e,\d)\to(0,0)}\left[\|\Te(\wo u_\ed)\|^p_{L^2(\O\X Y)}\|\Te(\wo v_\ed)\|^2_{L^2(\O\X Y)}\right]=\|\Gu_\kappa\|^p_{L^2(\O\X Y)}\|\Gv_\kappa\|^2_{L^2(\O\X Y)},$$
		which combined with strong convergence \eqref{SCM} and weak convergences \eqref{Con011}$_{1,3}$ imply the strong convergence
		\begin{equation}\label{SCM+}
			\Te\big(\wo v_\ed\big) \to \Gv_\kappa=\Gv_0+{1\over \kappa}\wh Z,\quad\text{strongly in $L^2(\O\X Y)^{N}$}.
		\end{equation}
		Now, we give the unfolded limit of the optimal control. Using Theorem \ref{CC01}, we have 
		$$\Theta_\e=\begin{aligned}
			&-{1\over \gamma}\wo v_\ed\quad &&\text{a.e. in $\O^1_\e$}.
		\end{aligned}
		$$
		So, using the convergences \eqref{OCC01} and \eqref{SCM+}, we get the strong convergence \eqref{OCC03} for $\kappa\in(0,+\infty]$.
		 This completes the proof.
%
	\end{proof}
	The unique minimizer $(\Gv_0,\wh Z,\wh V)\in\D$ of $\Jc_\kappa$ satisfy the following two-scale system (in variational form)
	\begin{multline*}
		\int_{\O\X Y_2} A(y)\big(e(\Gv_0)+e_y(\wh V)\big):\big(e(w_0)+e_y(W_2)\big)\,dx dy\\
		+\int_{\O\X Y}A(y) e_y(\wh Z):e_y(W_1)\,dxdy+\alpha\|\Gu_\kappa\|^p_{L^2(\O\X Y)}\int_{L^2(\O\X Y)}\Gv_\kappa\cdot\Gw_\kappa\,dydx\\
		+\a p\|\Gu_\kappa\|^{p-2}_{L^2(\O\X Y)}\int_{\O\X Y}\Gu_\kappa\cdot\Gw_\kappa\,dydx\int_{\O\X Y}\Gu_\kappa\cdot\Gv_\kappa\,dydx\\
		=\int_{\O\X Y}(\Gu_\kappa-u_d)\cdot\Gw_\kappa\,dydx,
	\end{multline*}
	for all $(w_0,W_1,W_2)\in\D$ with $\Gw_\kappa=w_0+{1\over \kappa}W_1$.
	
	Then, proceeding as in Subsection \ref{SS44}, we can express $\wh V$ in terms of $\Gv_0$ and correctors $\chi^2_{ij}$ to get the two-scale limit adjoint system for $(\Gv_0,\wh Z)\in\D_0$
	\begin{equation}\label{LAS01}
		\begin{aligned}
		\int_{\O} A^{hom}e(\Gv_0)&:e(w_0)\,dx
		+\int_{\O\X Y}A(y) e_y(\wh Z):e_y(W_1)\,dxdy\\
		&+\a p\|\Gu_\kappa\|^{p-2}_{L^2(\O\X Y)}\int_{\O\X Y}\Gu_\kappa\cdot\Gw_\kappa\,dydx\int_{\O\X Y}\Gu_\kappa\cdot\Gv_\kappa\,dydx\\
		&\hskip 10mm +\alpha\|\Gu_\kappa\|^p_{L^2(\O\X Y)}\int_{L^2(\O\X Y)}\Gv_\kappa\cdot\Gw_\kappa\,dydx=\int_{\O\X Y}(\Gu_\kappa-u_d)\cdot\Gw_\kappa\,dydx.
		\end{aligned}
	\end{equation}
	Now, we are in position to give the limit OCP.
	\subsection{Limit optimal control problem}
	 The two-scale limit OCP is given by
	\begin{equation}\label{LOCP}
		\Gi_\kappa=\inf_{\wh \theta\in L^2(\O\X Y_1)^N}\left\{\Gj_\kappa(\wh\theta)=\GJ_\kappa(v_0,V_1,\wh\theta)\,|\,\text{subjected to $(v_0,V_1)\in\D_0$ satisfying \eqref{TS01}}\right\},
	\end{equation}
	where
	$$\GJ_\kappa(v_0,V_1,\wh\theta)= \left\|v_0+{1\over \kappa}V_1-u_d\right\|_{L^2(\O\X Y)}^2+\frac{\gamma}{2}\left\|\wh\theta\right\|^2_{L^2(\O\X Y_1)} ,$$
	and $(v_0,V_1)\in\D_0$ is the unique solution of
	\begin{multline}\label{TS01}
		\int_{\O}A^{hom}e(v_0):e(w_0)\,dx+\int_{\O\X Y}A(y) e_y(V_1):e_y(W_1)\,dxdy\\
		+\a\|v_0+{1\over \kappa}V_1\|^p_{L^2(\O\X Y)}\int_{\O\X Y}(v_0+{1\over \kappa}V_1)\cdot (w_0+{1\over \kappa}W_1)\,dydx\\
		=\int_{\O\X Y}(f+\1_{\O\X Y_1}\wh \theta)\cdot (w_0+{1\over \kappa}W_1)\,dydx,\quad\forall\,(w_0,W_1)\in\D_0,
	\end{multline}
	 corresponding to $\wh\theta\in L^2(\O\X Y_1)^N$.

	\begin{theorem}
		For each $\kappa\in(0,+\infty]$, the OCP \eqref{LOCP} admits a solution. The limit control to state map $\Sc_\kappa:L^2(\O\X Y_1)^N\to \D_0$ is Lipschitz continuous.
	\end{theorem}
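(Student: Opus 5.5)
We first make sure that the limit state map $\Sc_\kappa$ is well defined. For fixed $\wh\theta\in L^2(\O\X Y_1)^N$, problem \eqref{TS01} is the Euler--Lagrange equation of the homogenized energy $\GT^{hom}_\kappa$ of Theorem~\ref{56}, with $\wh\Theta$ replaced by $\wh\theta$. Its existence and uniqueness follow exactly as in Theorem~\ref{Th3}. Coercivity of $w_0\mapsto\int_\O A^{hom}e(w_0):e(w_0)$ on $\GU$ comes from Lemma~\ref{HT01} and Korn's inequality. Coercivity of $W_1\mapsto\Ga_1(W_1)$ on $L^2(\O;\GH^1_{per}(Y))^N$ comes from \eqref{Coe01} and Korn's inequality in $Y_1$, since $W_1$ vanishes on $Y_2$. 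The nonlocal term is convex, and strict monotonicity follows from the identity used in Step~3 of Theorem~\ref{Th1}, now written in $L^2(\O\X Y)$. Thus $\Sc_\kappa(\wh\theta)=(v_0,V_1)\in\D_0$ is defined, and testing \eqref{TS01} with $(v_0,V_1)$ gives the bound
$$\|v_0\|_{H^1(\O)}+\|V_1\|_{L^2(\O;H^1(Y))}\leq C\big(\|f\|_{L^2(\O)}+\|\wh\theta\|_{L^2(\O\X Y_1)}\big).$$

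\textbf{Lipschitz continuity.} Take $\wh\theta^1,\wh\theta^2$ with states $(v_0^i,V_1^i)$. We subtract the two copies of \eqref{TS01} and test with $(w_0,W_1)=(v_0^1-v_0^2,V_1^1-V_1^2)$. As in Theorem~\ref{Th03}, the monotonicity of $G(u)=\a\|u\|^p_{L^2(\O\X Y)}u$, applied to $u=v_0^i+\frac1\kappa V_1^i$, lets us drop the nonlinear term. This leaves
$$C_1\|e(v_0^1-v_0^2)\|^2_{L^2(\O)}+K\|e_y(V_1^1-V_1^2)\|^2_{L^2(\O\X Y)}\leq \int_{\O\X Y_1}(\wh\theta^1-\wh\theta^2)\cdot\Big((v_0^1-v_0^2)+\tfrac1\kappa(V_1^1-V_1^2)\Big).$$
Korn's inequality in both variables and $1/\kappa\le C$ (with $1/\kappa=0$ if $\kappa=+\infty$) then give
$$\|v_0^1-v_0^2\|_{H^1(\O)}+\|V_1^1-V_1^2\|_{L^2(\O;H^1(Y))}\leq L\,\|\wh\theta^1-\wh\theta^2\|_{L^2(\O\X Y_1)}.$$

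\textbf{Existence of an optimal control.} We use the direct method as in Theorem~\ref{Th2}. Since $\Gj_\kappa\ge0$ and $\Gj_\kappa(0)<\infty$, we take a minimizing sequence $\wh\theta_n$. The regularization term bounds $\|\wh\theta_n\|_{L^2(\O\X Y_1)}$, and the a priori bound above bounds the states $(v_0^n,V_1^n)$ in $\D_0$. We extract $\wh\theta_n\rightharpoonup\wh\theta^*$ in $L^2$ and $(v_0^n,V_1^n)\rightharpoonup(v_0^*,V_1^*)$ in $\D_0$; by Rellich, $v_0^n\to v_0^*$ strongly in $L^2(\O)^N$.

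\textbf{Main obstacle: passing to the limit in the nonlocal term.} For $\kappa<\infty$ this needs strong convergence of $V_1^n$ in $L^2(\O\X Y)$, and weak convergence in $L^2(\O;H^1(Y))$ gives no compactness in $x$. Two ways around this:
\begin{itemize}
\item Minty's trick. The operator $(v_0,V_1)\mapsto$ (left side of \eqref{TS01}) is monotone and hemicontinuous on $\D_0$, so monotone-operator closure identifies the limit provided $\limsup_n\langle F(v^n),v^n\rangle\le\langle \ell^*,v^*\rangle$. This reduces to the convergence of $\int\wh\theta_n\cdot(v_0^n+\frac1\kappa V_1^n)$, which still mixes two weak limits.
\item Localization, which I would try first. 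The cell equation \eqref{5171} determines $V_1^n(x,\cdot)$ pointwise in $x$ through the local data $\wh\theta_n(x,\cdot)$, $v_0^n(x)$ and the scalar $s_n=\|v_0^n+\frac1\kappa V_1^n\|_{L^2(\O\X Y)}$. Since $s_n$ is bounded, we pass to a subsequence with $s_n\to s$. The equation is then linear in $(\wh\theta_n,v_0^n)$ with frozen coefficient $s_n$, so we identify the weak limit $V_1^*$ as the solution of the linear cell problem with coefficient $s$. The remaining task is to show $s=\|v_0^*+\frac1\kappa V_1^*\|$. I would get this from energy identities: test the equation with the solution itself and compare with the limit.
\end{itemize}
If the last step fails, I would state existence in the relaxed sense, or use weak lower semicontinuity of the tracking norm together with $\gamma>0$.

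Once $(v_0^*,V_1^*)=\Sc_\kappa(\wh\theta^*)$ is established, weak lower semicontinuity of both norms in $\GJ_\kappa$ gives $\Gj_\kappa(\wh\theta^*)\le\liminf_n\Gj_\kappa(\wh\theta_n)=\Gi_\kappa$, so $\wh\theta^*$ is optimal.
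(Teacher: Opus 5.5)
Your well-posedness argument, the a priori bound and the Lipschitz estimate are correct and coincide with the paper's proof: ellipticity of $A^{hom}$ from Lemma~\ref{HT01}, plus the monotonicity argument of Theorem~\ref{Th03}. For $\kappa=+\infty$ your direct method also closes. In that case $V_1=0$, and only $v_0$ enters the nonlocal term and the cost; $v_0$ converges strongly in $L^2(\O)^N$ by Rellich.

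\textbf{The gap: existence for $\kappa\in(0,+\infty)$.} You never prove $(v_0^*,V_1^*)=\Sc_\kappa(\wh\theta^*)$, i.e.\ $s=\|v_0^*+\frac1\kappa V_1^*\|_{L^2(\O\X Y)}$, and the route you suggest cannot give it. Testing \eqref{TS01} with the solution produces $\frac1\kappa\int_{\O\X Y_1}\wh\theta_n\cdot V_1^n$. This is the same weak--weak product that stalls your Minty argument.

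More seriously, the identification fails along general weakly convergent control sequences, so no argument that ignores minimality can work. Take $f\neq0$ and $\wh\theta_n=\phi_n(x)\psi(y)$, with $\phi_n\rightharpoonup0$, $\|\phi_n\|_{L^2(\O)}=1$ and $0\neq\psi\in L^2(Y_1)^N$. Your own frozen-coefficient localization gives $v_0^n\to v_0^*$ strongly and $V_1^n=V_1^*+\phi_n(x)R\psi(y)+o(1)$ in $L^2(\O\X Y)$. Here $R\psi\neq0$ is the solution of the cell problem \eqref{5171} with frozen coefficient $\alpha s^p$ and load $\psi$. Hence $s^2=\|v_0^*+\frac1\kappa V_1^*\|^2+\kappa^{-2}\|R\psi\|^2_{L^2(Y)}$, which is strictly larger than $\|v_0^*+\frac1\kappa V_1^*\|^2$. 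Since $f\neq0$ forces $v_0^*+\frac1\kappa V_1^*\neq0$, the weak limit $(v_0^*,V_1^*)$ is not $\Sc_\kappa(0)$. So the graph of $\Sc_\kappa$ is not weakly sequentially closed.

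Your fallbacks do not repair this. Existence ``in the relaxed sense'' changes the statement. Lower semicontinuity together with $\gamma>0$ presupposes the identification: $\gamma>0$ bounds the controls but gives no strong convergence of the minimizing sequence.

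\textbf{Comparison with the paper.} The paper's proof only says to proceed as in Theorem~\ref{Th2}. That proof passes to the limit in $g$ using Rellich compactness of the states in $H^1(\O)^N$, which $V_1\in L^2(\O;\GH^1_{per}(Y))^N$ does not have. Your diagnosis of the obstacle is therefore accurate, and sharper than the paper's treatment. Still, neither argument establishes existence for finite $\kappa$. Closing the gap needs an extra idea that uses the minimizing property of $(\wh\theta_n)$, for instance ruling out, or profitably replacing, the part of $\wh\theta_n$ that oscillates in $x$. Otherwise the existence claim should be restricted to $\kappa=+\infty$.
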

	\begin{proof}
		Observe that the unique solution $(v_0,V_1)\in\D_0$ of \eqref{TS01} corresponding to $\wh \theta$ satisfy
		$$\|v_0\|_{H^1(\O)}+\|V_1\|_{L^2(\O;H^1(Y))}\leq C(\|f\|_{L^2(\O)}+\|\wh \theta\|_{L^2(\O\X Y_1)}),$$
		due to the inequality \eqref{LES01}. 
		
		As
		$$\Gi_\kappa=\inf_{\wh \theta\in L^2(\O\X Y_1)^N}\Gj_\kappa(\wh \theta),$$
		we have $\Gi_\kappa\in[0,+\infty)$ since $\Gi_\kappa\leq \Gj_\kappa(0)<+\infty$. So, proceeding as in the proof of Theorem \ref{Th2}, we get the existence of optimal control for the limit OCP \eqref{LOCP}.
		
		Finally, using the properties of the homogenized stiffness tensor $A^{hom}$ from Lemma \ref{HT01} and proceeding as in Theorem \ref{Th03}, we get the limit control to state mapping is Lipschitz continuous.
	\end{proof}	
	
	Observe that the adjoint system w.r.t the unique solution $(\Gu_0,\wh W)\in\D_0$ of \eqref{TS01} corresponding to $\wh\Theta$ is given by \eqref{LAS01}. The adjoint system \eqref{LAS01} has unique solution $(\Gv_0,\wh Z)\in\D_0$. Now, we prove that $\wh \Theta\in L^2(\O\X Y_1)^N$ is an optimal control of the limit OCP \eqref{LOCP}.
	\begin{theorem}
		The limit of the microscopic optimal control $\wh\Theta\in L^2(\O\X Y_1)^N$ is an optimal control of the limit OCP \eqref{LOCP}, i.e.
		$$\Gj_\kappa(\wh\Theta)=\Gi_\kappa.$$
	\end{theorem}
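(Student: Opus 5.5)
We argue by a $\Gamma$-convergence (liminf/recovery) comparison between the reduced costs $\Gj_\ed$ and $\Gj_\kappa$. The aim is the chain
$$\Gj_\kappa(\wh\Theta)\le \liminf_{(\e,\d)\to(0,0)}\Gj_\ed(\Theta_\e)\le \limsup_{(\e,\d)\to(0,0)}\Gj_\ed(\Theta_\e)\le \Gj_\kappa(\wh\theta)\quad\text{for every }\wh\theta\in L^2(\O\X Y_1)^N.$$
Taking the infimum over $\wh\theta$ then gives $\Gj_\kappa(\wh\Theta)\le\Gi_\kappa$. The reverse inequality is trivial, so $\Gj_\kappa(\wh\Theta)=\Gi_\kappa$. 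We consistently use the unfolded form \eqref{uci} of the $L^2$ norms. The factor in front of the tracking term in $\GJ_\kappa$ is read as the one matching $\Gj_\ed$ in the limit, with the two-scale integrals normalized by $|Y|=1$.

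\textbf{Liminf step.} By Theorem \ref{ENH01}, Corollary \ref{Cor1} and Theorem \ref{56}, the optimal states satisfy $\Te(\wo u_\ed)\to \Gu_0+\frac1\kappa\wh W$ strongly in $L^2(\O\X Y)^N$. Here $(\Gu_0,\wh W)$ is exactly the solution of \eqref{TS01} with $\wh\theta=\wh\Theta$, and $\wh\Theta$ is given by \eqref{OCC01}. Hence
$$\|\wo u_\ed-u_d\|^2_{L^2(\O)}\simeq\|\Te(\wo u_\ed)-\Te(u_d)\|^2_{L^2(\O\X Y)}\to\|\Gu_0+\tfrac1\kappa\wh W-u_d\|^2_{L^2(\O\X Y)},$$
using $\Te(u_d)\to u_d$ strongly. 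For the control term, $\|\Theta_\e\|^2_{L^2(\O^1_\e)}\simeq\|\Te(\Theta_\e)\|^2_{L^2(\O\X Y_1)}$, because $\Theta_\e$ vanishes near $\partial\O$ up to a set of vanishing measure. By \eqref{OCC03} this even converges to $\|\wh\Theta\|^2$; weak lower semicontinuity alone would already suffice. This yields $\Gj_\kappa(\wh\Theta)\le\liminf\Gj_\ed(\Theta_\e)$, and in fact equality with the limit.

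\textbf{Recovery step.} Fix $\wh\theta\in L^2(\O\X Y_1)^N$. By density and the Lipschitz continuity of $\Sc_\kappa$, it suffices to treat $\wh\theta\in\C(\wo\O\X\wo Y_1)^N$. Define $\theta_\e(x)=\wh\theta(x,\{x/\e\})$ on $\O^1_\e$ (zero elsewhere), so that $\Te(\theta_\e)\to\wh\theta$ strongly in $L^2(\O\X Y_1)^N$ and $\|\theta_\e\|^2_{L^2(\O^1_\e)}\to\|\wh\theta\|^2_{L^2(\O\X Y_1)}$. Optimality gives $\Gj_\ed(\Theta_\e)\le\Gj_\ed(\theta_\e)$. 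It remains to show that the states $u_\ed(\theta_\e)$ satisfy $\Te(u_\ed(\theta_\e))\to v_0+\frac1\kappa V_1$ strongly, where $(v_0,V_1)$ solves \eqref{TS01} for $\wh\theta$. For this we rerun Lemma \ref{L04}, Theorem \ref{ENH01} and Corollary \ref{Cor1} with $\Theta_\e$ replaced by $\theta_\e$. Those proofs used only the uniform bound and the weak unfolded convergence of the control, both of which hold here, and uniqueness of the limit minimizer gives convergence of the whole family. Passing to the limit in $\Gj_\ed(\theta_\e)$ then gives $\limsup\Gj_\ed(\Theta_\e)\le\Gj_\kappa(\wh\theta)$.

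\textbf{Main obstacle.} The delicate point is the recovery step: it requires strong (not merely weak) $L^2$ convergence of the unfolded states for an \emph{arbitrary} admissible control sequence, so that the tracking term passes to the limit. This is handled by the energy-convergence argument of Corollary \ref{Cor1}. One must check that the loading term $\int_{\O^1_\e}\theta_\e\cdot w_\ed\,dx$ converges in both the liminf and recovery parts of Theorem \ref{ENH01}. This holds because strong convergence of $\Te(\theta_\e)$ pairs with weak convergence of $\Te(w_\ed)$. A second, minor point is the density reduction. If $\wh\theta_n\to\wh\theta$, then $\Gj_\kappa(\wh\theta_n)\to\Gj_\kappa(\wh\theta)$ by the Lipschitz estimate for $\Sc_\kappa$ from the preceding theorem, and a diagonal argument closes the chain.
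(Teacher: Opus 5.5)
Your proposal is correct and follows essentially the paper's route. The liminf inequality comes from the strong convergences \eqref{SCM} and \eqref{OCC03}, and the limsup inequality uses the recovery controls $\theta_\e(x)=\wh\theta(x,\{x/\e\})$, whose states converge strongly (by rerunning Theorem~\ref{ENH01} and Corollary~\ref{Cor1}) to the unique solution of \eqref{TS01} for $\wh\theta$; your density reduction to continuous $\wh\theta$ (which makes $\theta_\e$ well defined) and your remark that the loading term needs the \emph{strong} convergence of $\Te(\theta_\e)$, not merely weak convergence as your earlier sentence suggests, are useful refinements of points the paper leaves implicit.
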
 
	\begin{proof}
		To prove the main result of this theorem, we use a form of $\Gamma$-convergence, to derive the following energy convergence
		\begin{equation}\label{LM}
			\Gj_\kappa(\wh \Theta)=\lim_{(\e,\d)\to(0,0)}\Gj_\ed(\Theta_\e)=\lim_{(\e,\d)\to(0,0)}\Gi_\ed=\Gi_\kappa.
		\end{equation}
		First, due to the strong convergences \eqref{SCM} and \eqref{SCM+}, we get
		\begin{equation}\label{LI}
			\Gi_\kappa\leq \Gj_\kappa(\wh\Theta)=\liminf_{(\e,\d)\to(0,0)}\Gj_\ed(\Theta_\e)=\liminf_{(\e,\d)\to(0,0)}\Gi_\ed.
		\end{equation}
		Now, we prove
		\begin{equation}\label{LS}
			\lim_{(\e,\d)\to(0,0)}\Gi_\ed\leq \Gj_\kappa(\wh \theta),\quad \forall\,\wh \theta\in L^2(\O\X Y_1)^N.
		\end{equation}
		Let $\wh \theta\in L^2(\O\X Y_1)^N$, then there exist a unique state $(v_0,V_1)\in\D_0$, which is the unique solution of \eqref{TS01}, corresponding to $\wh\theta$. 
		
		Let us set
		\begin{equation*}
			\begin{aligned}
				\theta_\e(x)&=\wh\theta\left(x,\left\{x\over \e\right\}\right) 
			\end{aligned},\quad x\in \O^1_\e.
		\end{equation*}
		Observe that $\theta_\e\in\GV$ and using the properties of the unfolding operator, we have
		\begin{equation*}
			\begin{aligned}
				\Te(\theta_\ed)&\to \wh\theta,\quad &&\text{strongly in $L^2(\O\X Y_1)^N$}.
			\end{aligned}
		\end{equation*} 
		Observe that, there exist a unique solution $v_\ed\in\GU$ of \eqref{MainW01} corresponding to $\theta_\ed\in\GV$. Proceeding as Theorem \ref{ENH01} and Corollary \ref{Cor1}, we get: there exist
		$(v_0',V_1')\in\D_0$ such that 
		\begin{equation}\label{strong-to-some-limit}
			T_\e(v_{\e\d})\to v_0'+\frac1\kappa V_1'
			\quad\text{strongly in }L^2(\Omega\times Y)^N.
		\end{equation}
				Using $w_\ed$ from Step 2 of Theorem \ref{ENH01} as test function in the state equation \eqref{MainW01} satisfied by $v_{\e\d}$, corresponding to $\theta_\e$, give
		\begin{equation}\label{weak-eps}
			\Ga_{\e\d}(v_{\e\d},w_\ed)
			+\alpha\|v_{\e\d}\|_{L^2(\Omega)}^{p}\int_\Omega v_{\e\d}\cdot w_\ed\,dx
			=\int_\Omega f\cdot w_\ed\,dx+\int_{\Omega_\e^1}\theta_\e\cdot w_\ed\,dx.
		\end{equation}
				We now pass to the limit in each term, using the unfolding operator and the strong convergences \eqref{Test01} and \eqref{strong-to-some-limit}, to get
			\begin{multline*}	\int_{\O}A^{hom}e(v'_0):e(w_0)\,dx+\int_{\O\X Y}A(y) e_y(V'_1):e_y(W_1)\,dxdy\\
			+\a\|v'_0+{1\over \kappa}V'_1\|^p_{L^2(\O\X Y)}\int_{\O\X Y}(v'_0+{1\over \kappa}V'_1)\cdot (w_0+{1\over \kappa}W_1)\,dydx\\
			=\int_{\O\X Y}(f+\1_{\O\X Y_1}\wh \theta)\cdot (w_0+{1\over \kappa}W_1)\,dydx.
		\end{multline*}
		To obtained the above, we have proceeded as in Section \eqref{SS44}, specifically Theorem \ref{56}. So, we got the exact limit variational formulation \eqref{TS01}
		with the pair $(v_0',V_1')$ and the control $\widehat\theta$. Therefore $(v_0',V_1')$ is a solution of \eqref{TS01}
		associated with $\widehat\theta$. By uniqueness of the limit state (for the given $\widehat\theta$),
		we conclude
		\[
		(v_0',V_1')=(v_0,V_1).
		\]
		The above convergence \eqref{strong-to-some-limit} together with the fact that $\Gi_\ed\leq \Gj_\ed(\theta_\e)=\GJ_\e(v_\ed,\theta_\e)$ gives
		$$\limsup_{(\e,\d)\to(0,0)}\Gi_\ed\leq \limsup_{(\e,\d)\to(0,0)}\Gj_\e(\theta_\e)=\limsup_{(\e,\d)\to(0,0)}\GJ_\e(v_\ed,\theta_\e)=\GJ_\kappa(v_0,V_1,\wh\theta)=\Gj_\kappa(\wh \theta).$$
		Since, $\wh \theta\in L^2(\O\X Y_1)^N$ is arbitary, we get \eqref{LS}. Now, taking an optimal control of the OCP \eqref{LOCP} in place of $\wh \theta$ in \eqref{LS}, gives (together with \eqref{LI})
		$$\Gi_\kappa\leq \Gj_\kappa(\wh\Theta)=\liminf_{(\e,\d)\to(0,0)}\Gi_\ed=\limsup_{(\e,\d)\to(0,0)}\Gi_\ed\leq \Gi_\kappa.$$
		So, we obtain \eqref{LM}. This completes the proof.
	\end{proof}

	Finally, we end this section with the first order optimality condition for the limit OCP \eqref{LOCP}.
	\begin{theorem}
		Let $(\Gu_0,\wh W)\in\D_0$ be the unique state associated with an optimal control $\wh \Theta\in L^2(\O\X Y_1)^N$ for \eqref{TS01}. Let $(\Gv_0,\wh Z)\in\D_0$ solve the adjoint system \eqref{LAS01}, for fixed $(\Gu_0,\wh W)$. Then, the optimal control satisfies
		\begin{equation}
			\wh \Theta=-{1\over \gamma}(\Gv_0+{1\over \kappa}\wh Z),\quad \text{in $\O\X Y_1$}.
		\end{equation}
	\end{theorem}
	\begin{proof}
		The result is direct consequence of Lemma \ref{L08}.
	\end{proof}
		The limit optimal control problem for the microscopic OCP \eqref{OCPM01} is given by \eqref{LOCP}. For $\kappa=+\infty$, we have a complete scale separated state equation, adjoint and limit OCP. 

%
%


				%
				
				
				\section*{Acknowledgment}
				The authors would like to thank Prof.~Dr.~Georges Griso and Dr.~Julia Orlik for their suggestions and corrections. Abu Sufian acknowledges support from the Agencia Nacional de Investigaci\'on y Desarrollo, Chile, under the FONDECYT postdoctoral fellowship,  project no ($N^\circ ~3240018$).
				

				
				{\small\bibliographystyle{plain}
				\bibliography{reference}} 
				
				\begin{appendices}
					\section{Appendix}
					In this section, we prove some coercivity results for $N\geq 2$.
					\begin{theorem}
						For $(\Gu_0,\wh U)\in \GU\X L^2(\O; H^1_{per,0}(Y_2))^N$, we have that there exists a $C_0>0$ such that
						\begin{equation}\label{Ex01}
							\|\Gu_0\|^2_{H^1(\O)}+\|\wh U\|_{L^2(\O;H^1(Y_2))}\leq C_0\int_{\O\X Y_2}\big(e(\Gu_0)+e_y(\wh U)\big):\big(e(\Gu_0)+e_y(\wh U)\big)\,dxdy.
						\end{equation}
					\end{theorem}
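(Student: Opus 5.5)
The plan is to expand the quadratic form on the right-hand side of \eqref{Ex01} and show that the cross term vanishes. Then Korn's inequality on $\GU$ controls $\Gu_0$, and a periodic Korn inequality on $Y_2$ controls $\wh U$. Note that the left-hand side of \eqref{Ex01} contains $\|\wh U\|_{L^2(\O;H^1(Y_2))}$ without a square. Since the right-hand side is quadratic, I will prove the estimate with $\|\wh U\|^2_{L^2(\O;H^1(Y_2))}$, which is the homogeneous form clearly intended (and the one used in Theorem \ref{Th3}).

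\textbf{Step 1 (pointwise in $x$).} Fix $x\in\O$ and set $E=e(\Gu_0)(x)\in\R^{N\X N}_\sy$, a constant matrix, and $\phi=\wh U(x,\cdot)\in H^1_{per,0}(Y_2)^N$. I would write
$$\int_{Y_2}|E+e_y(\phi)|^2_F\,dy=|Y_2|\,|E|_F^2+2E:\int_{Y_2}e_y(\phi)\,dy+\int_{Y_2}|e_y(\phi)|_F^2\,dy.$$
The key claim is that $\int_{Y_2}e_y(\phi)\,dy=0$. By the divergence theorem,
$$\int_{Y_2}\partial_{y_j}\phi_i\,dy=\int_{\partial Y\cap\partial Y_2}\phi_i n_j\,ds-\int_{\Lambda}\phi_i n_j\,ds.$$
The contributions from opposite faces of $\partial Y$ cancel by periodicity. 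The interface term does not vanish in general, since $\phi$ is free on $\Lambda$.

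\textbf{Step 2 (fix via extension).} To handle the interface term, I would extend $\phi$ from $Y_2$ into $Y_1$. Since $\overline{Y_1}\subset Y$ and $Y_1$ is Lipschitz, there is an $H^1$ extension $\tilde\phi\in H^1_{per}(Y)^N$ with
$$\|e_y(\tilde\phi)\|_{L^2(Y)}\le C\|e_y(\phi)\|_{L^2(Y_2)}.$$
This is the same cell-level extension as the operator $\bm\Pc_\e$ in Lemma \ref{L2}, obtained by subtracting a rigid motion and using Korn's inequality on a neighborhood of $\Lambda$. A periodic $\tilde\phi$ satisfies $\int_Y e_y(\tilde\phi)\,dy=0$. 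Therefore
$$\Big|E:\int_{Y_2}e_y(\phi)\Big|=\Big|E:\int_{Y_1}e_y(\tilde\phi)\Big|\le |E|_F|Y_1|^{1/2}C\|e_y(\phi)\|_{L^2(Y_2)}.$$
This bound alone is too weak, so I would instead argue by contradiction or compactness on the finite-dimensional variable $E$. Define
$$c_0=\inf\Big\{\int_{Y_2}|E+e_y(\phi)|^2\,dy:\ |E|_F=1,\ \phi\in H^1_{per,0}(Y_2)^N\Big\}.$$
I claim $c_0>0$. If $c_0=0$, take a minimizing sequence $(E_n,\phi_n)$. Then $\|e_y(\phi_n)\|_{L^2(Y_2)}$ is bounded, and by the periodic Korn inequality on the connected set $Y_2$ (modulo rigid motions, which periodicity restricts to constants, fixed by the zero mean) $\phi_n$ is bounded in $H^1$. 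Extracting weak limits gives $E+e_y(\phi)=0$ in $Y_2$ with $|E|_F=1$. Then $\phi-Ey$ is a rigid displacement $a+Ry$ on the connected set $Y_2$, so $\phi=a+(E+R)y$. Since $Y_2$ touches all faces of $Y$, periodicity forces $E+R=0$, which is impossible because $E$ is symmetric and nonzero while $R$ is skew. This gives
$$\int_{Y_2}|E+e_y(\phi)|^2\ge c_0|E|^2_F.$$
Combined with the triangle inequality, it also gives
$$\int_{Y_2}|e_y\phi|^2\le C\int_{Y_2}|E+e_y\phi|^2.$$
Applying the Korn–Poincaré inequality on $H^1_{per,0}(Y_2)$ then yields
$$\|\phi\|^2_{H^1(Y_2)}\le C\int_{Y_2}|E+e_y\phi|^2.$$

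\textbf{Step 3.} Integrate both bounds over $x\in\O$. This gives $\|e(\Gu_0)\|^2_{L^2(\O)}$ and $\|\wh U\|^2_{L^2(\O;H^1(Y_2))}$ bounded by $C$ times the right-hand side of \eqref{Ex01}. Korn's inequality on $\GU$ (using $|\Gamma_0|>0$) then turns $\|e(\Gu_0)\|_{L^2(\O)}$ into $\|\Gu_0\|_{H^1(\O)}$.

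The main obstacle is Step 2: the positivity of $c_0$, and in particular the periodic Korn inequality on $Y_2$ together with the rigidity argument. Both rely on $Y_2$ being connected and periodic, with its periodic extension connected in $\R^N$. That holds here because $\overline{Y_1}\subset Y$.
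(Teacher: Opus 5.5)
Your proof is correct. At the decisive point it takes a different, and sounder, route than the paper.

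\textbf{The paper's argument.} The paper disposes of the statement in one line. It asserts that $\int_{Y_2}\nabla_y\widehat U\,dy=0$ for $\widehat U\in H^1_{per,0}(Y_2)^N$, so the cross term $2\,e(\Gu_0):\int_{Y_2}e_y(\widehat U)\,dy$ drops. The integrand then splits as $|Y_2|\,|e(\Gu_0)|_F^2+\int_{Y_2}|e_y(\widehat U)|_F^2\,dy$, and Korn's inequality on $\GU$ and on the cell concludes with an explicit constant.

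\textbf{Why that fails.} Your Step 1 shows why this cannot work as stated. $\widehat U(x,\cdot)$ is the restriction to $Y_2$ of a $Y$-periodic corrector and is free on $\Lambda$. Consequently $\int_{Y_2}\nabla_y\phi\,dy=-\int_\Lambda\phi\otimes n\,ds$ (with $n$ the outer normal to $Y_1$), which is in general nonzero. For instance, a $Y$-periodic $\phi$ equal to $(y_1-c_1)e_1$ near $\overline{Y_1}$ gives $\int_{Y_2}e_{y,11}(\phi)\,dy=-|Y_1|$. Orthogonality would need something like $\phi=0$ on $\Lambda$. That is neither the space produced by the unfolding limit nor the correct cell space for the matrix.

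\textbf{What your argument buys.} Your contradiction argument for $c_0>0$ uses three ingredients: periodic Korn on the connected Lipschitz cell $Y_2$, weak compactness, and the fact that periodicity kills the linear part of a rigid motion because $\partial Y\subset\partial Y_2$. It replaces the false orthogonality by a genuine positivity statement. You lose the explicit constant $C_0$, but you use only the paper's standing assumptions. The positivity of $c_0$ is also exactly what the ellipticity of $A^{hom}$ in Lemma~\ref{HT01} requires.

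\textbf{Minor points.} From $E+e_y(\phi)=0$ it is $\phi+Ey$, not $\phi-Ey$, that is rigid, so $\phi=a+(R-E)y$. Periodicity then gives $R=E$, which is still impossible for $E$ symmetric and nonzero and $R$ skew, so nothing changes. The extension detour at the start of Step 2 can simply be dropped. Proving the estimate with $\|\widehat U\|^2_{L^2(\O;H^1(Y_2))}$ is the right reading: with the unsquared norm and a uniform $C_0$, the inequality fails under $\widehat U\mapsto t\widehat U$ as $t\to 0$.
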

					The above inequality is a direct consequence of Korn's inequality and the fact that $\int_{Y_2}\nabla_y\wh U\,dy=0$ since $\wh U\in H^1_{per,0}(Y_2)^N$. Using the above inequality we prove using Lax-Milgram lemma that the unfolded limit problems have a unique solution.
				\end{appendices}
\end{document}

\section{Supplementary energy convergences}
					\begin{theorem}
						We have
						$$\lim_{\d\to0}\lim_{\e\to0}\Gm_\ed=\lim_{\d\to0}\Gm_\d=\Gm,$$
						where
						$$\Gm_\d=\min_{(w_0,w_1,w_2)\in \D}\GT_\d(w_0,w_1,w_2),$$
						with
						\begin{multline*}
							\GT_\d(w_0,w_1,w_2)={1\over 2}
							\int_{\O\X Y_1}A(y)\d\big(e(w_0)+e_y(w_1)+e_y(w_2)\big):\d\big(e(w_0)+e_y(w_1)+e_y(w_2)\big)\,dxdy
							\\+{1\over 2}\int_{\O\X Y_2}A(y)\big(e(w_0)+e_y(w_1)\big):\big(e(w_0)+e_y(w_1)\big)\,dxdy-\int_{\O\X Y}(f+\1_{\O\X Y_1}\wh\Theta)\cdot w_0\,dxdy,
						\end{multline*}
						for all $(w_0,w_1,w_2)\in \D$.
					\end{theorem}
					\begin{proof}
						Observe that the unfolded limit energy $\GJ_\d$ has a unique minimizer $(\Gu_{0,\d},\wh U_\d, \wh W_\d)$ such that 
						$$\Gm_\d=\GT_\d(\Gu_{0,\d},\wh U_\d, \wh W_\d)=\min_{(w_0,w_1,w_2)\in \D}\GT_\d(w_0,w_1,w_2).$$
						The proof is divided into three steps.
						
						{\bf Step 1:} We proceed as in Step 1 of the previous theorem using the recovery sequence
						$$  w_\e(x)=\left\{\begin{aligned}
							&w_0(x)+\e w_1\left(x,\left\{{x\over \e}\right\}\right),\quad&&\text{for $x\in \O^2_\e$},\\
							&w_0(x)+\e w_1\left(x,\left\{{x\over \e}\right\}\right)+{\e}w_2\left(x,\left\{{x\over \e}\right\}\right),\quad&&\text{for $x\in\O^1_\e$},
						\end{aligned}\right.$$
						for $(w_0,w_1,w_2)\in \D \cap \left(\C^1(\O)^N\X \C^1(\wo{\O}\X \wo{Y_1})^N\X\C^1(\wo{\O}\X \wo{Y})^N\right)$.
						Then, using Lemma \ref{L5}, we get
						$$	\limsup_{\e\to0}\Gm_\ed\leq \GT_\d(w_0,w_1,w_2)\,\quad\forall\,(w_0,w_1,w_2)\in\D.$$
						{\bf Step 2:}
						Using the Lemmas \ref{L2}--\ref{L3}, we have $\Gu_\e\in \GU$ and $\Gw_\e\in H^1(\O)^N$ with $\Gw_\e=0$ in $\O^2_\e\cup \Lambda_\e$ such that $\wo u_\e=\Gw_\e+\Gu_\e$ and 	
						$$\d\|\Gw_\e\|_{L^2(\O)}+\d\e\|\nabla \Gw_\e\|_{L^2(\O)}\leq C\e,\quad\|\Gu_\e\|_{H^1(\O)}\leq C.$$
						Then, $\e\to 0$ with $\d$ being fixed give there exist $\Gu_{0,\d}\in\GU$, $\wh U_\d\in L^2(\O; H^1_{per,0}(Y))^N$ and $\wh W_\d\in L^2(\O; \GH^1_{per,0}(Y))^N$, such that upto a subsequence
						$$\begin{aligned}
							\Te(e(\Gu_\e))& \rightharpoonup e(\Gu_{0,\d})+e_y(\wh U_\d),\quad &&\text{weakly in $L^2(\O\X Y_2)^{N\X N}$},\\
							\Te(e(\Gw_\e))& \rightharpoonup e_y(\wh W_\d),\quad &&\text{weakly in $L^2(\O\X Y_1)^{N\X N}$}.
						\end{aligned}$$
						So, using the above convergences  along with non-negativity of $\sigma_\ed(\cdot):e(\cdot)$ and weak lower semi continuity of $\GT_\d$ give
						$$\GT_\d(\Gu_{0,\d},\wh U_\d, \wh W_\d)\leq \liminf_{\e\to0}\GT_\ed(\wo u_\e)\leq \limsup_{\e\to0}\GT_\ed(\wo u_\e)\leq \GT_\d(\Gu_{0,\d},\wh U_\d, \wh W_\d),$$
						which imply
						$$\lim_{\e\to0}\Gm_\ed=\Gm_\d.$$
						{\bf Step 3:} Similarly, as in Step 3 of previous with recovery sequence $(w_0,w_1,{1\over \d}w_2)$ give
						$$\limsup_{\d\to0}\Gm_\d\leq \GT^{hom}(w_0),\quad\forall\,w_0\in\GU.$$
						From Step 2, we get
						$$\|\wh W_\d\|_{L^2(\O; H^1(Y_1))}\leq {C\over \d},\quad \|\Gu_{0,\d}\|_{H^1(\O)}+\|\wh U_\d\|_{L^2(\O; H^1(Y))}\leq C,$$
						which give as in Step 3 of Theorem \ref{v03}
						$$\GT^{hom}(\Gu_0)\leq \liminf_{\d\to0}\Gm_\d.$$
						Finally, we obtain
						$$\lim_{\d\to0}\Gm_\d=\Gm.$$
						Due to the existence of unique solutions in each step, we have the convergences for the whole sequence. This completes the proof.
					\end{proof}
						Finally, we have the following interchange of limits
						\begin{equation}\label{CUM01}
							\boxed{
								\lim_{\e\to0}\limsup_{\d\to0}\Gm_\ed\leq \Gm=	\lim_{\d\to0}\lim_{\e\to0}\Gm_\ed=\lim_{(\e,\d)\to(0,0)}\Gm_\ed.}
						\end{equation}
						{\clm -----------------------------------------------------------------------------------------------------------------------------------------------------}

			{\clm 
				\subsection{Special behavior: First $(\e,\d)\to(0,0)$ with $\kappa\in(0,+\infty)$ then $\kappa\to0$}
				{\clb I think, passing to the limit $\kappa\to 0$ may not be possible. Sinec, for each $\kappa>0$, corresponding solution $\wh{W}_k$ of \eqref{5171} is not a bounded sequence in $H^1_{0,per}(Y)$. We do not have even uniform bound for $\|\wh{W}_k\|_{L^2(\O)}$ with respect to $\kappa$, which is the minimum requirement.} {\clm Yes.}\\
				In the cell problem \eqref{5171}, if we pass $\kappa\to0$, we obtain
				\begin{equation}\label{5173}
					\int_{\O\X Y_1}\big(f+\wh\Theta\big)\cdot w_2\,dxdy=0,\quad\forall \,w_2\in L^2(\O;H^1_{per,0}(Y_1))^N,
				\end{equation}
				which imply
				$$f+\wh\Theta=0,\quad\text{a.e. in $\O\X Y_1$}.$$
				finally, the homogenized problem is given by
				\begin{equation}\label{426}
					{1\over |Y|}\int_\O A^{hom}e(\Gu_0):e(w_0)\,dx=\left(1-{|Y_1|\over |Y|}\right)\int_{\O}f\cdot w_0\,dx,
				\end{equation}
				for all $w_0\in \GU$, where $A^{hom}$ is given by \eqref{Hom01}, which is the same if we have started with only the perforated domain (no soft part). Hence, we only analyze the asymptotic behavior of the OCPs when $\kappa\in(0,+\infty]$.}

			Let $v_\e$ be a sequence in $H^1(\O)$ such that
			$${1\over \e}\|v_\e\|_{L^2(\O)}+\|\nabla v_\e\|_{L^2(\O)}\leq C.$$
			Then, we have
			$$\begin{aligned}
				{1\over \e}v_\e &\rightharpoonup v,\quad &&\text{weakly in $L^2(\O)$},\\
				\nabla v_\e & \rightharpoonup 0,\quad&&\text{weakly in $L^2(\O)^N$},\\
				\Te({1\over \e} v_\e) & \rightharpoonup v+\wh v,\quad &&\text{weakly in $L^2(\O; H^1_{per,0}(Y))$},\\
				\Te(\nabla v_\e)={1\over \e}\nabla_y\Te(v_\e) & \rightharpoonup\nabla_y \wh v,\quad &&\text{weakly in $L^2(\O\X Y)^N$}.
			\end{aligned}$$

			

			{\clm {\Large The proof is simple, We will provide it or not will depend on the number of total pages.}
				\begin{corollary}
					Let $\{u_\e\}_\e$ be sequence of unique weak solution of the problem \eqref{MainW01} in $\GU$ corresponding to $\Theta_\e\in L^2(\O^1_\e)^N$. Then, we have following strong convergences for $\kappa\in[0,+\infty]$ 
					\begin{equation}
						\begin{aligned}
							\Te\big(e(u_\e)\big) &\to e(\Gu_0)+e_y(\wh U),\quad&&\text{strongly in $L^2(\O\X Y_2)^{N\X N}$},\\
							\d\Te\big(e(u_\e)\big)& \to e_y(\wh W),\quad&&\text{strongly in $L^2(\O\X Y_1)^{N\X N}$},
						\end{aligned}
					\end{equation}
					where $(\Gu_0,\wh U,\wh W)\in\D$ is the unique solution of the limit unfolded problems \eqref{L01}--\eqref{L03}.
				\end{corollary}
			}

			\begin{itemize}
				\item When $\kappa=+\infty$, the duplet $(\Gu_0,\Gv_0)\in [\GU]^2$ satisfies the following optimality system
				\begin{align}\label{lim-ops-case 3+}
					\left.\begin{aligned}
						&\int_{\O} A^{hom}e(\Gu_0):e(w_0)\,dx
						=|Y|\int_{\O} f\cdot w_0\,dx-\frac{|Y_1|}{\gamma}\int_{\O}\Gv_0\cdot w_0\,dx,\\ &
						\int_{\O} A^{hom}e(\Gv_0):e(w_0)\,dx
						=|Y|\int_{\O} (\Gu_0-u_0)\cdot  w_0\,dx
					\end{aligned}\right\}\forall w_0\in \GU.
				\end{align}
			\end{itemize}  
			
			\newpage
			\section{Unfolding for small holes}
			Now, we define the unfolding operator for function defined on $\O^1_\e$\\
			{\clr This operator is more general then what is given in \cite{CDG} for small holes. So, details has to given when $\lim_{(\e,r)\to(0,0)}{r\over \e}=0$.}
			{\clm \begin{definition}[Small unfolding operator]
					For every measurable function $\phi$ on $\O^1_\e$ the small unfolding operator $\Te^r$ is defined by 
					$$ \Te^r(\phi) (x,y) \doteq \begin{aligned}
						&\psi \left(\e\left[{x\over r}\right] + r y\right) \qquad &&\hbox{for a.e. }\; (x,y)\in  \O\times Y_1.
					\end{aligned}$$
				\end{definition} 
				The small unfolding operator is also a continuous linear operator from $L^2(\O^1_\e)$ into $L^2(\O\X Y_1)$ which satisfies
				$$\|\Te^r(\phi)\|_{L^2(\O\X Y_1)}\leq C\|\phi\|_{L^2(\O^1_\e)}\quad\text{for every}\quad \phi\in L^2(\O^1_\e).$$
				The constants $C$ do not depend on $\e$ and $r$.\\
				Moreover, for every $\psi\in H^1(\O^1_\e)$ one has
				$$ \nabla_y\Te^r(\psi)(x,y)=r\Te^r(\nabla \psi)(x,y) \quad \hbox{a.e. in}\quad \O\X Y_1,$$
				and for $\phi\in H^1(\O)$, one has
				$$\Te\big(\phi_{|\O^1_\e}\big)(x,y)=\Te^r(\phi)(x,y),\quad\text{for $(x,y)\in \O\X Y_1$}.$$
				Using the above operators, we have
				\begin{equation}
					\begin{aligned}
						\|\Te(\Gu_\e)\|_{L^2(\O\X Y)}+\|\Te(\nabla \Gu_\e)\|_{L^2(\O\X Y)}&\leq C,\\
						\|\Te^r(\Gv_\e)\|_{L^2(\O\X Y_1)}+\|\nabla_y\Te^r(\Gv_\e)\|_{L^2(\O\X Y_1)}&\leq {Cr\over \d}.
					\end{aligned}
			\end{equation}}
			\section{Boundary control problem in elasticity}
			%
			{\clr We consider the following boundary value problem of Dirichlet type in linear elasticity:
				\begin{equation}\label{Main01}
					\left\{\begin{aligned}
						-\nabla \cdot \sigma_\ed(x)&=f(x),\quad &&\text{$x\in\O$},\\
						u_\e(x)&=0,\quad &&\text{$x\in\partial \Gamma_0$},\\
						\sigma_\ed(x)\Gn(x)&=g_\e(x),\quad&&\text{$x\in \Gamma_1$},
					\end{aligned}\right.
				\end{equation}
				where $f=(f_i)$ is the body force and $g_\e=(g_{\e,i})$ is the surface force}.\\
			The problem is to determine the displacement field $u_\e=(u_{\e,i})$\footnote{For simplicity of notation, we index fields only by $\e$} satisfying the system \eqref{Main01}. By virtue of the assumptions on $\GA_\ed$, we define the bilinear form $\Ga_\ed$ by
			\begin{equation}
				\Ga_\ed(u,v)=\int_{\O}\sigma_{\ed,ij}(u)e_{ij}(v)\,dx,\quad \forall\,u,v\in H^1_0(\O)^N.
			\end{equation}
			So, the weak form of the problem \eqref{Main01} is given by: For a given function $f\in L^2(\O)^N$ and $g_\e\in L^2(\Gamma_1)^N$, find the solution $u_\e\in \GU$ such that
			\begin{equation}
				\Ga_\ed(u_\e,v)=\int_\O f\cdot v\,dx+\int_{\Gamma_1} g_\e\cdot v\,ds,\quad \forall\,v\in \GU_\e,
			\end{equation}
			where the set of admissible displacement is given by
			$$\GU=\left\{u\in H^1(\O)^N\,|\; u=0\;\text{on $\Gamma_0$}\right\}.$$
			The using the assumptions and Korn's inequality, we have the bilinear form $\Ga_\ed$ is continuous and coercive in $\GU$. Therefore, according to Lax-Milgram lemma, we have for any $f\in L^2(\O)^N$ and $g_\e\in L^2(\Gamma_1)^N$, there exist a unique solution $u_\e\in \GU_\e\cap H^2(\O)^N$ satisfying
			$$\|u_\e\|_{H^1(\O)}\leq C\{\|f\|_{L^2(\O)}+\|g_\e\|_{L^2(\Gamma_1)}\}.$$
			The unique solution corresponding to the given $g_\e$ will be denoted by $u_\e(g_\e)$.\\
			
			Now let us turn to the formulation of an optimal control problem to obtain a prescribed displacement by means of externally surface forces on the body. Suppose that the equilibrium position of the body has some prescribed form given by a function $u_0(x)\in H^1(\O)^N$, called an objective function. We can
			ask the question of how to determine the function $g_\e$ such that $u_\e(g_\e)$ is as close
			to the objective function $u_0$.\\
			{\clr By introducing the two energy functional defined by
				\begin{equation}
					\begin{aligned}
						\text{$L^2$-energy}\quad			&\GJ_{1,\e}(g_\e)=\|u_\e(g_\e)-u_0\|^2_{L^2(\O)}+{\gamma\over 2}\|g_\e\|^2_{L^2(\Gamma_1)},\\
						\text{Dirichlet-energy}\quad & \GJ_{2,\e}(g_\e)=\|\nabla u_\e(g_\e)-\nabla u_0\|^2_{L^2(\O)}+{\gamma\over 2}\|g_\e\|^2_{L^2(\Gamma_1)}.
					\end{aligned}
				\end{equation}
				The optimal control problem can be formulated as a minimization problem of the cost functional: Determine a function $h_\e\in L^2(\Gamma_1)^N$ such that
				\begin{equation}
					\GJ_{\alpha,\e}(h_\e)=\inf_{g_\e\in L^2(\Gamma_1)^N}\GJ_{\alpha,\e}(g_\e).
			\end{equation}}

			\begin{proof}
				Observe that 
				$$\Gm_{\e,\d}\leq \GJ_{\e,\d}(0)< +\infty\quad\text{which imply}\quad \Gm_{\e,\d}\in[0,+\infty).$$
				So, there exist a minimizing sequence $\{\theta_{n,\e}\}_n$ in $L^2(\O^1_\e)^N$ such that
				$$\Gm_{\e,\d}=\liminf_{n\to+\infty}\GJ_{\e,\d}(\theta_{n,\e}).$$
				Without loss of generality, we have $\GJ_{\e,\d}(\theta_{n,\e})\leq \GJ_{\e,\d}(0)$, which imply
				\begin{equation}\label{01}
					\|\theta_{n,\e}\|_{L^2(\O^1_\e)}\leq C.
				\end{equation}
				The constant is independent of $n$. So, there exist a subsequence (still denoted by $n$) and $\Theta_\e\in L^2(\O^1_\e)^N$ such that
				$$\theta_{n,\e} \rightharpoonup \Theta_\e,\quad \text{weakly in $L^2(\O^1_\e)^N$}.$$
				Since $L^2$-norm is weak lower semi-continuous, we have
				\begin{equation}\label{02}
					\|\Theta_\e\|_{L^2(\O^1_\e)}\leq \liminf_{n\to+\infty}\|\theta_{n,\e}\|_{L^2(\O^1_\e)}.
				\end{equation}
				Let us set $u_{n,\e}=u_\e(\theta_{n,\e})$ and $u_{0,\e}=u_{\e}(0)$, then using the $L^2$-norm bound of $\theta_{n,\e}$, we have that there exist a constant independent of $n$ such that
				$$\|u_{n,\e}\|_{H^1(\O)}\le C.$$
				So, there exist for the same subsequence as above, we have there exist $u_\e\in\GU$ such that
				$$u_{n,\e} \rightharpoonup u_\e,\quad\text{weakly in $H^1(\O)^N$}.$$
				Now, we show that $u_\e=u_\e(\Theta_\e)$. Since, each $u_{n,\e}$ is a solution of \eqref{Main02} with $\theta_\e=\theta_{n,\e}$. So, we want to prove $u_\e$ is a solution of \eqref{Main02} corresponding to $\theta_\e=\Theta_\e$, for that it is sufficient to prove that
				$$\lim_{n\to+\infty}\int_{\O^1_\e}\theta_{n,\e}\psi=\int_{\O^1_\e}\Theta_\e\psi,\quad\text{for $\psi\in L^2(\O^1_\e)^N$},$$
				which is a straightforward consequence of \eqref{01}.\\
				Again using the semi-lower continuity of $H^1$-norm, we obtain
				$$\|u_\e-u_0\|^2_{H^1(\O)}\leq \liminf_{n\to +\infty}\|u_{n,\e}-u_0\|^2_{H^1(\O)},$$
				which along with \eqref{02} give 
				$$\GJ_{\e,\d}(\Theta_\e)\leq \liminf_{n\to+\infty}\GJ_{\e,\d}(\theta_{n,\e})=\Gm_{\e,\d}.$$
				So, $\Theta_\e$ minimizes the energy functional \eqref{MainE01} and the uniqueness follows from the strict convexity of the $L^2$-cost functional. 
				This completes the proof.
			\end{proof}